\def \NN {\mathcal{N}}
\def \XX {\mathcal{X}}
\def \dist {{\rm dist}}
\def \IR {\mathbb{R}}
\def \IS {\mathbb{S}}
\def \IN {\mathbb{N}}
\def \Bry {M_{\rm Bry}}
\def \Per {M_{\rm Per}}
\def \eps {\epsilon}
\def \IN {\mathbb{N}}
\DeclareMathOperator{\Rm}{Rm}
\DeclareMathOperator{\Ric}{Ric}
\newcommand*{\rom}[1]{\rm {\expandafter\@slowromancap\romannumeral #1@}}
\def\XXint#1#2#3{{\setbox0=\hbox{$#1{#2#3}{\int}$ }
\vcenter{\hbox{$#2#3$ }}\kern-.6\wd0}}
\protected\def\vts{%
  \ifmmode
    \mskip0.5\thinmuskip
  \else
    \ifhmode
      \kern0.08334em
    \fi
  \fi
}
\numberwithin{equation}{section}
\newtheorem{Theorem}{Theorem}[section]
\newtheorem{Proposition}[Theorem]{Proposition}
\newtheorem{Conjecture}[Theorem]{Conjecture}
\newtheorem{Lemma}[Theorem]{Lemma}
\newtheorem{Corollary}[Theorem]{Corollary}
\newtheorem{Remark}[Theorem]{Remark}
\theoremstyle{definition}
\newtheorem{Definition}[Theorem]{Definition}
\def \NN {\mathcal{N}}
\def \IS {\mathbb{S}}
\title{Dimension Reduction for Positively Curved Steady Solitons}
\author{Pak-Yeung Chan\footnote{Pak-Yeung Chan's research is supported by EPSRC grant EP/T019824/1.}, Zilu Ma, Yongjia Zhang\footnote{Yongjia Zhang's research is partially supported by Shanghai Sailing Program 23YF1420400 and Research Start-up Fund of SJTU WH220407110.}}
\date{}
\begin{document}


\maketitle
\begin{abstract}
We consider noncollapsed steady gradient Ricci solitons with nonnegative sectional curvature. We show that such solitons always dimension reduce at infinity. 
This generalizes an earlier result in \cite{CDM22} to higher dimensions. 
In dimension four, we classify possible reductions at infinity, which lays foundation for possible classifications of steady solitons. Moreover, we show that any tangent flow at infinity of a general noncollapsed steady soliton must split off a line. This generalizes an earlier result in \cite{BCDMZ21} to higher dimensions. While this article is under preparation, we realized that part of our main results are proved independently in a recent post \cite{ZZ23} 
{under different assumptions}.
\end{abstract}

\bigskip 

\tableofcontents
\section{Introduction}

\def \rd {\mathrm{d}}

In order to apply Ricci flow to topological problems especially in dimension 4, one needs better understanding of the formation of singularities in order to perform surgeries. Among singularity models, \textit{steady Ricci solitons} play a crucial role as they may arise from type-II singularities \cite{Ha95}. Moreover, the classification for steady solitons is fundamental to the classification of general ancient flows, see \cite{Bre13,Bre20,ABDS22,BDS21}.

We call a triple $(M^n,g,f)$ a complete \textbf{steady gradient Ricci soliton}, or \textbf{steady soliton},
if $(M^n,g)$ is a complete Riemannian manifold and $f$ is a smooth function on $M$ satisfying
\[
    \Ric=\nabla^2f.
\]
A Ricci flat manifold is an obvious trivial example of steady soliton, but we shall always consider the non-Ricci-flat case. By a classical result due to Hamilton \cite{Ha95}, we may normalize the metric so that
\[
    R+|\nabla f|^2=1.
\]
Let $(\Phi_t)_{t\in \IR}$ be the $1$-parameter group of diffeomorphisms generated by $-\nabla f,$ then $g_t=\Phi_t^*g$ is a Ricci flow, called the \emph{canonical Ricci flow} induced by the steady soliton $(M^n,g,f).$ The time span of $(M,g_t)$ is $\IR$, and any such solution is called \textit{eternal} by Hamilton. In fact, each time-slice of the canonical flow is also a steady soliton: let $f_t:=f\circ\Phi_t$, then we obvisouly have
\begin{align}\label{steady-soliton-canonical-flow-identity}
    \Ric_{g_t}=\nabla^2_{g_t}f_t,\quad R_{g_t}+|\nabla_{g_t}f_t|^2=1,\quad \text{ for all }\quad t\in(-\infty,+\infty).
\end{align}

We say that a steady soliton $(M^n,g,f)$ is \textbf{noncollapsed}, if the canonical Ricci flow induced by $(M^n,g,f)$ has bounded Nash entropy, i.e.,
\[
    \mu_\infty := \inf_{\tau>0} \NN_{o,0}(\tau) >-\infty,
\]
where $\NN_{o,0}(\tau)$ denotes the pointed Nash entropy based at a space-time point $(o,0)$ at scale $\tau$; see, e.g., \cite{Bam20a}. Nash entropy is the fundamental monotone quantity in Bamler's works \cite{Bam20a,Bam20c}. By a result of the second and the third authors \cite[Theorem 1.13]{MZ21}, our notion of noncollapsedness is equivalent to the $\kappa$-noncollapsedness \cite{Per02} for the canonical Ricci flow of a steady soliton with nonnegative sectional curvature. 


Choi, Haslhofer, and Hershkovits in \cite{CHH23} fully classified $\alpha$-noncollapsed translators in $\IR^4$, which are natural analogues of steady solitons in the extrinsic setting.
Among other conjectures concerning general noncollapsed ancient flows,  Haslhofer \cite{Has23} proposed the following conjecture about $4$-dimensional noncollapsed steady solitons.

\begin{Conjecture}
    
Any $4$-dimensional noncollapsed steady soliton with $\sec\ge 0$ can only be one the following: the 4d Bryant soliton, the 3d Bryant soliton times a line, or one of the 1-parameter family of $\mathbb{Z}_2\times {\rm O}_3$-symmetric steady
solitons constructed by Lai \cite{Lai20}.

\end{Conjecture}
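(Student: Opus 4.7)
The plan is to combine the paper's dimension-reduction theorem with a Brendle-style symmetry-propagation argument, and then invoke Lai's classification of $O(3)$-symmetric steady solitons in dimension four.

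First, by the main theorems of this paper, any $4$-dimensional noncollapsed steady soliton $(M^4,g,f)$ with $\sec \ge 0$ dimension reduces at infinity, and every tangent flow at infinity splits off a line. Combined with the paper's classification of admissible $4$d reductions, the reduction is either (i) the $3$d Bryant soliton (a one-dimensional reduction, yielding an asymptotic $3$d Bryant $\times \IR$ structure), or (ii) the round cylinder $\IS^2 \times \IR$ (a two-dimensional reduction). The remaining candidates are excluded: the flat reduction forces $(M,g)$ to be Ricci-flat and is thus omitted, while spherical space-form reductions $\IS^3/\Gamma$ are collapsed and ruled out by the noncollapsed hypothesis.

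Second, in each case I would propagate the asymptotic $O(3)$ rotational symmetry inward. Following Brendle \cite{Bre13,Bre20}, one constructs approximate Killing vector fields on a large end of $M$, generating an approximate $O(3)$-action rotating the cross-sectional $\IS^2$, and extends them globally as solutions to a Lichnerowicz-type equation $\La_L X + \LL_{\nabla f} X = 0$, leveraging the curvature decay provided by dimension reduction to build barriers and apply a maximum principle. The target conclusion is an exact global $O(3)$-symmetry of $(M,g,f)$. Third, once $O(3)$-symmetry is established, the soliton equation reduces to a system of ODEs on a two-dimensional orbit space; the classification of noncollapsed $O(3)$-symmetric steady solitons in $4$d by Lai \cite{Lai20} then yields exactly the three families in the conjecture: the $4$d Bryant soliton (where the $O(3)$ enhances to $O(4)$), the $3$d Bryant $\times \IR$ (where a line splits off), and Lai's one-parameter family.

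The main obstacle is the symmetry-propagation step. Unlike Brendle's $3$d setting, where the asymptotic cylindrical symmetry essentially pins down the soliton, here Lai's family already demonstrates that the $3$d Bryant $\times \IR$ asymptotic profile is compatible with a continuous family of non-isometric global steady solitons, so uniqueness cannot come from asymptotics alone. The argument must therefore be delicate enough to extract \emph{only} $O(3)$ (not full $O(4)$) symmetry in general, yet rigid enough to promote asymptotic symmetry to global symmetry. Overcoming this will likely require a refined spectral analysis of the linearized soliton operator adapted to the reduced profile, together with uniqueness results for the associated elliptic system under a prescribed asymptotic tangent flow. A subsidiary difficulty is bridging case (ii): when the reduction is only $\IS^2\times\IR$, one first needs to upgrade the asymptotic analysis to a full $3$d Bryant $\times \IR$ profile before the above $O(3)$-propagation machinery becomes applicable.
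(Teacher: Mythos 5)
This statement is Haslhofer's \emph{Conjecture}, not a theorem of the paper; the authors explicitly leave it open, and the paper proves only partial progress toward it (Theorems \ref{thm: main}, \ref{thm: main 2}, \ref{thm: main cyl flow}, Corollary \ref{cor: 4d tan flow}). There is therefore no proof in the paper to compare against, and your proposal does not close the gap either.

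The decisive missing piece is exactly the step you flag as ``the main obstacle'': promoting asymptotic rotational structure to a global $O(3)$-symmetry. In the rotationally symmetric rigidity theorems of Brendle \cite{Bre13,Bre14,Bre20}, the asymptotic geometry (one neck, one tip) determines the soliton uniquely, and the Lichnerowicz/maximum-principle machinery exploits that rigidity. Here, as you observe, Lai's flying-wing family shows that the asymptotic data $M^3_{\rm Bry}\times\IR$ does \emph{not} pin down the soliton, so asymptotic-symmetry propagation must distinguish within a genuine moduli space. No such argument exists in the literature, and sketching ``a refined spectral analysis of the linearized soliton operator'' is a restatement of the conjecture's difficulty, not a proof. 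A further problem is your final step: you invoke ``the classification of noncollapsed $O(3)$-symmetric steady solitons in $4$d by Lai \cite{Lai20},'' but \cite{Lai20} \emph{constructs} those examples; it contains no such classification. Even granting $O(3)$-symmetry, the resulting ODE system has not been classified, so the last step also fails.

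There are also errors in the dimension-reduction bookkeeping. You say $\IS^3/\Gamma$ reductions ``are collapsed and ruled out by the noncollapsed hypothesis'' --- this is not correct and not the paper's argument. Compact reductions such as $\IS^3/\Gamma$ and $M^3_{\rm Per}$ are ruled out in Proposition \ref{prop: no cpt reduct} by showing the level sets of $f$ carry arbitrarily long $\IS^2$-necks, contradicting a compact limiting cross-section; noncollapsedness plays no role in excluding them. Moreover the paper's Theorem \ref{thm: main 2} assumes $\Ric>0$ in addition to $\sec\ge 0$, and is stated conditionally on the tangent flow at infinity being $\IS^2\times\IR^2$; your proposal glosses over both the extra positivity hypothesis and the case $M^3_{\rm Bry}\times\IR$ where $\Ric$ has a kernel. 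In summary, the proposal outlines a plausible program but does not prove the conjecture, and two of its citations (Lai's ``classification,'' collapsedness of $\IS^3/\Gamma$) are incorrect.
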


We remark that this is slightly stronger than Haslhofer's original conjecture, which assumes $\Rm\ge 0$ (nonnegative curvature operator). One of the reasons why this stronger version is also likely is because the compactness result given in the appendix of the current paper assumes only $\sec\ge 0$ (nonnegative sectional curvture). 

According to Theorem \ref{thm: tan flow oo} in \cite{BCDMZ21}, any noncollapsed non-Ricci-flat 4d steady solitons with bounded curvature admits a unique tangent flow at infinity, which falls into one of the following: $(\IS^3/\Gamma)\times \IR, \IS^2\times \IR^2, (\IS^2\times_{\mathbb{Z}_2}\IR)\times \IR$. 
Assuming $\sec\ge 0$ and $\Ric>0,$  Corollary \ref{cor: 4d tan flow} below shows that the tangent flow at infinity can only be $\IS^3\times \IR$ or $\IS^2\times \IR^2$;  
in the former case, the steady soliton must be the Bryant soliton by Theorem \ref{thm: main cyl flow}, which is in fact implied by \cite[Theorem 1.3]{BCDMZ21} and \cite[Theorem 1.2]{Bre14}. So, in the majority of the paper, we  consider steady solitons with $\IS^2\times \IR^2$ as the unique tangent flow at infinity, which conjecturally should be Lai's examples in \cite{Lai22}.

As a preliminary step towards the classification of steady solitons, we shall understand geometric limits at infinity. For example, in the recent classification results for 3d ancient Ricci flows \cite{Bre13,Bre20,ABDS22,BDS21, Lai22},  the geometric limits were classified at first. 
We first prove dimension reduction for steady solitons in any dimensions, assuming $\sec\ge 0$.
\begin{Theorem}
\label{thm: main}
    Let $(M^n,g,f)$ be a complete noncollapsed steady gradient Ricci soliton with $\sec\ge 0$. If $(M^n,g)$ is non-flat, then it dimension reduces at infinity. That is, for any points $x_i\to \infty,$ let $r_i^{-2}=R(x_i)$. 
    By passing to a subsequence,
    \[
        (M, r_i^{-2}g_{r_i^2t}, (x_i,0))
        \longrightarrow (N^{n-1}\times \IR, \bar g_t+\rd z^2, ((\bar x,0),0)),
    \]
    in the sense of Cheeger-Gromov-Hamilton over $(-\infty,0]$,
    where $(N,\bar g_t)$ is some noncollapsed ancient flow, and $g_t$ denotes the canonical Ricci flow induced by the steady soliton.
\end{Theorem}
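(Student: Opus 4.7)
The strategy is to pass to a subsequential Cheeger-Gromov-Hamilton limit of the rescaled flows and then exploit the soliton potential $f$ to force this limit to split off a line.

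\textit{Extracting the limit.} For a non-flat noncollapsed steady soliton with $\sec \ge 0$, scalar curvature decays to zero at infinity (a known consequence of noncollapsing together with $\Ric \ge 0$), so $r_i \to \infty$. The rescaled flows $\tilde g_i(t) := r_i^{-2} g_{r_i^2 t}$ retain $\sec \ge 0$ and the Nash entropy bound, and satisfy $R_{\tilde g_i(0)}(x_i) \equiv 1$. Invoking the compactness theorem stated in the appendix, which for noncollapsed Ricci flows with $\sec \ge 0$ and normalized basepoint curvature provides uniform local curvature and derivative-of-curvature bounds on backward parabolic neighborhoods, one extracts a subsequence converging in the Cheeger-Gromov-Hamilton sense on $(-\infty,0]$ to a noncollapsed ancient flow $(M_\infty, g_{\infty,t}, x_\infty)$ with $\sec \ge 0$.

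\textit{Splitting off a line.} Define the rescaled potential by $\tilde f_i(t,\cdot) := r_i^{-1}(f_{r_i^2 t} - f(x_i))$, where $f_s := f \circ \Phi_s$ is the canonical-flow potential, so $\tilde f_i(0, x_i) = 0$. A direct scaling calculation (the Hessian as a $(0,2)$-tensor is scale invariant in components, while its $\tilde g$-norm is $r_i^2$ times its $g$-norm) yields on each time slice
\[
|\nabla_{\tilde g_i}\tilde f_i|^2_{\tilde g_i} \;=\; 1 - R_g, \qquad |\nabla^2_{\tilde g_i}\tilde f_i|_{\tilde g_i} \;=\; r_i\,|\Ric_g|_g.
\]
On any fixed parabolic neighborhood of $(x_i,0)$ in $\tilde g_i$, the uniform bound $R_{\tilde g_i} \le C$ from the previous step gives $R_g \le C r_i^{-2}$; combined with $|\Ric|_g \le R_g$ (which holds because $\Ric \ge 0$ forces $|\Ric|^2 \le R^2$), this yields $|\nabla \tilde f_i|^2 \to 1$ and $|\nabla^2 \tilde f_i| \to 0$ uniformly on compact parabolic sets. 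Since $\tilde f_i(0, x_i) = 0$ and the gradient is uniformly bounded, an Arzela-Ascoli argument extracts a $C^{1,\alpha}_{\rm loc}$ limit $\tilde f_\infty$ on the spacetime limit, satisfying $|\nabla \tilde f_\infty| \equiv 1$ and $\nabla^2 \tilde f_\infty \equiv 0$. Hence $\nabla \tilde f_\infty$ is a parallel unit vector field on each time slice, and the de Rham splitting theorem, together with the rigidity of parallel vector fields under Ricci flow, produces the product decomposition $(M_\infty, g_{\infty,t}) \cong (N^{n-1} \times \IR,\, \bar g_t + \rd z^2)$.

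\textit{Main obstacle.} The crux of the argument is the uniform backward-in-time curvature control on the rescaled flows: although the global scalar curvature is bounded on the original soliton, the backward canonical flow moves points toward the critical set of $f$ where $R$ may increase, so after rescaling by $r_i^2$ the curvature could a priori blow up on large backward parabolic neighborhoods. Supplying local parabolic regularity estimates that control $R_{\tilde g_i}$ on backward neighborhoods of all sizes is precisely what the $\sec \ge 0$ compactness theorem in the appendix is built for, and it is the technical heart of the proof; the splitting argument above is essentially a routine Hessian-estimate computation once the limit exists.
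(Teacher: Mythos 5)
Your "extracting the limit" step is consistent with the paper's compactness machinery in the appendix, but your splitting argument rests on a false premise. You assert that scalar curvature decays to zero at infinity as "a known consequence of noncollapsing together with $\Ric\ge 0$," and you need this to conclude $r_i\to\infty$, which in turn drives $R_g\le C r_i^{-2}\to 0$ and $r_i|\Ric|_g\to 0$ on the rescaled slices. This is not true in general: the product of two Bryant solitons is a complete non-flat noncollapsed steady soliton with $\sec\ge 0$ (Nash entropy is additive under products, so the product is still noncollapsed), yet along $x_i=(p,q_i)$ with $p$ the tip of the first factor and $q_i\to\infty$ one has $R(x_i)\to R_1(p)>0$. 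In fact the paper itself points to exactly this example (Section 3, remark after the definition of $\mathcal{C}_f$) as a soliton without uniform scalar-curvature decay. In such cases $r_i$ stays bounded, your rescaled potential $\tilde f_i$ limits to a genuine steady-soliton potential on the blow-up rather than a linear splitting function, and the Hessian argument produces no line.

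This is precisely the distinction the paper flags in its introduction: the potential-function route (which is what you are attempting) only forces a splitting when the rescaling parameter tends to infinity, which is automatic for tangent flows at infinity (Theorem \ref{thm: split tan flow}, where the scales $\tau_i\to\infty$ by definition) but not for arbitrary sequences $x_i\to\infty$. The paper's actual proof of Theorem \ref{thm: main} instead invokes Remark \ref{rmk: steady Harnack} to place the canonical flow in the class $\mathcal{A}(n,\mu_\infty,n)$ and then appeals to Theorem \ref{thm: general split}, whose splitting step is metric-geometric: Perelman's long-range curvature estimate shows $R(x_i,0)\,|ox_i|_0^2\to\infty$ for any $x_i\to\infty$ (whether or not $R(x_i)\to 0$), and then a standard Toponogov/Cheeger–Gromoll splitting-at-infinity argument (e.g.\ \cite[Theorem 5.35]{MT07}), using $\sec\ge 0$, produces the line in the blow-up limit. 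To repair your proof you would either need to switch to that Toponogov argument for the general case, or separately handle sequences along which $R(x_i)$ stays bounded below, since the potential-function computation alone cannot do so.
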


In fact, we shall prove a pre-compactness result, namely, Theorem \ref{thm: cpt}, for a larger class of noncollapsed ancient flows. As a consequence, we can prove a slightly more general splitting theorem at infinity---Theorem \ref{thm: general split}. Theorem \ref{thm: main} follows as a special case. We follow Perelman's arguments in \cite[Section 11]{Per02} and Brendle's recent presentation in \cite{Bre22}.

In dimension 4, thanks to the recent full classification of 3d noncollapsed ancient flows (\cite{Bre20,BDS21}),
we can classify possible reductions.
When assuming $\sec\ge 0,\Ric>0,$ by Corollary \ref{cor: 4d tan flow} below, the remaining interesting case is the one in which steady solitons has $\IS^2\times \IR^2$ as the tangent flow at infinity.
\begin{Theorem}\label{thm: main 2}
     Let $(M^4,g,f)$ be a complete noncollapsed steady gradient Ricci soliton with $\sec\ge 0, \Ric>0$. If the tangent flow at infinity is $\IS^2\times \IR^2$, then it can only dimension reduce to
     \[
        \IS^2\times \IR \quad
        \text{ or }
        \quad
        M^3_{\rm Bry}.
     \]
\end{Theorem}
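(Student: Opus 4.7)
The plan is to apply Theorem \ref{thm: main}, invoke the full classification of three-dimensional noncollapsed ancient flows with $\sec\ge 0$, and then rule out all candidates except $\IS^2\times\IR$ and $M^3_{\rm Bry}$ by a two-scale consistency argument with the tangent flow at infinity $\IS^2\times\IR^2$.

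First, for any $x_i\to\infty$ with $r_i^{-2}=R(x_i)$, Theorem \ref{thm: main} yields, after passing to a subsequence, a Cheeger--Gromov--Hamilton limit
\[
(M,\,r_i^{-2}g_{r_i^2 t},\,(x_i,0))\longrightarrow (N^3\times\IR,\,\bar g_t+\rd z^2,\,((\bar x,0),0)),
\]
where $(N^3,\bar g_t)$ is a noncollapsed ancient flow with $\sec\ge 0$, nonflat since $R_{\bar g_0}(\bar x)=1$ by construction. By the classification of Brendle \cite{Bre20} (non-compact case) and Bamler--Daskalopoulos--Sesum \cite{BDS21} (compact case), $(N^3,\bar g_t)$ is one of: (a) a round shrinking $\IS^3/\Gamma$, (b) Perelman's compact rotationally symmetric $\kappa$-solution on $\IS^3$, (c) the shrinking round cylinder $\IS^2\times\IR$, (d) its $\IZ_2$-quotient $\IS^2\times_{\IZ_2}\IR$, or (e) the 3d Bryant soliton $M^3_{\rm Bry}$.

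The key step is to exclude (a), (b), (d) by iterating the rescaling. The tangent flow at infinity of $(M,g_t)$, which by hypothesis is $\IS^2\times\IR^2$, is extracted at scales much larger than the dimension-reduction scale $r_i$. A diagonal argument, obtaining the tangent flow at infinity as a further blow-down of the dimension-reduced flow $N^3\times\IR$, together with the uniqueness of the tangent flow at infinity (Theorem \ref{thm: tan flow oo}), forces the asymptotic shrinker at $-\infty$ of $(N^3,\bar g_t)$, crossed with the Gaussian $\IR$ factor, to equal $\IS^2\times\IR^2$. Equivalently, the asymptotic shrinker of $N^3$ itself must be $\IS^2\times\IR$. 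But one computes: (a) and (b) both have the round shrinking $\IS^3/\Gamma$ as asymptotic shrinker; (d) preserves its $\IZ_2$-identification under rescaling and so has asymptotic shrinker $\IS^2\times_{\IZ_2}\IR\neq\IS^2\times\IR$; while (c) and (e) both have $\IS^2\times\IR$ as asymptotic shrinker (the cylinder is self-similar, and the 3d Bryant soliton is asymptotically cylindrical in time). Thus only (c) and (e) survive, yielding the claimed dichotomy.

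The main obstacle is rigorously executing this ``iterated blow-down'': one must verify that the tangent flow at infinity of $(M,g_t)$, defined by rescaling at a fixed basepoint as $T\to\infty$, can equivalently be obtained as a further rescaling of the dimension-reduced limit $N^3\times\IR$. This requires a careful diagonal argument using Bamler's $\NN$-entropy stability together with the precompactness statement Theorem \ref{thm: cpt} proved in this paper, in order to align basepoints and scales between the two limits and to pass curvature and noncollapsing bounds through both scaling steps.
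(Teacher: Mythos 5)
Your high-level plan (invoke Theorem \ref{thm: main}, list the five possible $3$-dimensional reductions, then exclude three of them using the hypothesis that the tangent flow at infinity is $\IS^2\times\IR^2$) matches the paper's. However, the mechanism you propose for the exclusion step contains a factual error that makes it fail on exactly one of the cases you need to rule out, and the ``iterated blow-down'' you hinge everything on is itself a substantive unproved claim.

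The concrete error: you assert that Perelman's compact $\kappa$-solution $M^3_{\rm Per}$ has the round $\IS^3$ as its asymptotic shrinker. This is false. As $t\to-\infty$, $M^3_{\rm Per}$ develops a long neck with two Bryant-like caps; the $\ell$-centers sit in the neck, and the asymptotic shrinker (tangent flow at $-\infty$) is the round cylinder $\IS^2\times\IR$. Consequently, your ``two-scale consistency check'' is satisfied for case (b): $(\IS^2\times\IR)\times\IR = \IS^2\times\IR^2$ is exactly the tangent flow at infinity of $(M,g_t)$. Your argument therefore does not exclude $M^3_{\rm Per}$, and the claimed dichotomy does not follow. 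Separately, the iterated blow-down itself (that the asymptotic shrinker of a dimension-reduced flow $N^3\times\IR$, obtained at an arbitrary sequence $x_i\to\infty$, must coincide with the tangent flow at infinity of the original soliton) is not established and is delicate: the tangent flow at infinity is extracted at $\ell$-centers, which are a very particular sequence, whereas your $x_i$ are arbitrary; you cannot in general commute the two rescalings without aligning basepoints, and uniqueness of the tangent flow at infinity alone (Theorem \ref{thm: tan flow oo}) does not do this for you.

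The paper avoids both problems by not blowing down $N^3$ at all. It shows (Lemma \ref{lem: conv to a shrinker} and Lemma \ref{lem: reduce to cpt}) that along the dimension-reduction sequence the rescaled potential $f_i=(f-f(x_i))/r_i$ converges to the coordinate on the split-off line, so the rescaled level sets $\{f=f(x_i)\}$ converge to $N^3$ itself. The $\IS^2\times_{\IZ_2}\IR$ case is then killed by a topological embedding obstruction in the level sets (Lemma \ref{lem: sec level set}). For compact $N^3$ — which handles $\IS^3/\Gamma$ and $M^3_{\rm Per}$ uniformly, with no appeal to their tangent flows at $-\infty$ — the paper first shows $f$ grows linearly (using MS13 and MW11), and then runs Proposition \ref{prop: long necks}: the hypothesis on the tangent flow at infinity, applied at the $\ell$-centers $x_s$ on each level set $M_s$, produces arbitrarily long $(\eps,2)$-necks in $M_s$, contradicting the bounded rescaled diameter forced by convergence of $M_s$ to the compact $N^3$. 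That diameter-versus-neck comparison is the idea your argument is missing, and it is precisely what kills $M^3_{\rm Per}$.
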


By a slight abuse of notations, we denote by
\[
    \IS^2\times \IR,\quad
     M^{3}_{\rm Bry},\quad
     M^{3}_{\rm Per}
\]
the standard shrinking $\IS^2\times \IR$, the canonical form of the three-dimensional Bryant soliton, Perelman's type-II compact $\kappa$-solution \cite[1.4]{Per03a}, respectively. For simplicity, we sometimes omit the metrics when they are clear from the context.

\medskip
\noindent
\textbf{Remark.} \emph{We point out that some results similar to Theorem \ref{thm: main} and Theorem \ref{thm: main 2} are proved independently in a recent post \cite{ZZ23} under different assumptions.}
\medskip

In a forthcoming work \cite{MMS23}, Mahmoudian, \v{S}e\v{s}um, and the second-named author will obtain precise geometric asymptotics for 4d steady solitons with ${\rm O}_3$-symmetry, which are similar to \cite{ABDS22}. Theorem \ref{thm: main 2} is fundamental to such precise analysis.


The idea of proving Theorem \ref{thm: main} and Theorem \ref{thm: main 2} is to show that the limit admits a line, by making use of the curvature positivity condition. The advantage of this approach is that one can obtain a splitting limit from any sequence of points going to infinity, and the disadvantage is the restriction of the curvature condition. However, there is another approach presented in \cite{BCDMZ21}, which is to show that the potential function converges to a splitting map. This method works without any curvature restriction, but holds only for tangent flows at infinity. We also apply this method to higher dimensions and prove the following general result.

\begin{Theorem}[{= Theorem \ref{thm: split tan flow}}]
    Let $(M^n,g,f)$ be a complete non-Ricci-flat noncollapsed steady soliton with bounded curvature.
    Then any tangent flow at infinity of its canonical flow splits off a line.
\end{Theorem}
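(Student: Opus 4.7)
The plan is to construct a nonconstant affine function on the tangent flow at infinity directly from the steady potential $f$, whose existence then gives the splitting. The starting observation is that on the canonical flow, the shifted potential
\[
    F(x,t) := f_t(x) + t
\]
is a heat-equation solution with unit gradient bound. Indeed, $\partial_t f_t = -|\nabla f_t|^2_{g_t} = R_{g_t} - 1$ and $\Delta_{g_t} f_t = R_{g_t}$ yield $(\partial_t - \Delta_{g_t}) F = 0$, while $|\nabla F|^2_{g_t} = 1 - R_{g_t} \le 1$ (using $R \ge 0$ on a steady soliton). First I would pick a tangent-flow scale sequence $\tau_i \to \infty$ based at a spacetime point $(x^*,0)$ and set $g^i_t := \tau_i^{-1} g_{\tau_i t}$ together with
\[
    F^i(x,t) := \tau_i^{-1/2}\bigl[F(x,\tau_i t) - c_i\bigr],
\]
where $c_i := F(x^*,-\tau_i)$ so that $F^i(x^*,-1) = 0$. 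A direct scaling computation gives $(\partial_t - \Delta_{g^i_t}) F^i = 0$ and $|\nabla F^i|_{g^i_t} \le 1$. By the noncollapsing hypothesis and Bamler's $\mathbb{F}$-compactness \cite{Bam20a, Bam20c}, the rescaled flows subconverge to a shrinking metric soliton $(\bar{\mathcal{X}}, \bar g_t)$, and the $F^i$ should pass, via heat-kernel convergence along the $\mathbb{F}$-limit, to a heat solution $\bar F$ on $\bar{\mathcal{X}}$ with $|\nabla \bar F|^2 \le 1$.

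The mechanism that forces $\bar F$ to be affine is a Bochner-type monotonicity along the conjugate heat kernel. Let $d\nu_s$ denote the conjugate heat kernel measure based at $(x^*,0)$. The identity $(\partial_s - \Delta_{g_s}) |\nabla F|^2 = -2 |\nabla^2 F|^2$, paired with the conjugate heat equation for $d\nu_s$, gives
\[
    \frac{d}{ds} \int |\nabla F|^2 \, d\nu_s \;=\; -2 \int |\nabla^2 F|^2 \, d\nu_s \;\le\; 0,
\]
so $L := \lim_{s \to -\infty} \int |\nabla F|^2 \, d\nu_s$ exists in $[0,1]$. Symmetrically, $\frac{d}{ds} \int R\, d\nu_s = 2 \int |\Ric|^2\, d\nu_s \ge 0$, and $|\nabla F|^2 + R = 1$ implies $L + R_{-\infty} = 1$ with $R_{-\infty} := \lim_{s \to -\infty} \int R \, d\nu_s$. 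Passing to the $\mathbb{F}$-limit using the convergence of conjugate heat kernels I would obtain $\int |\nabla \bar F|^2 \, d\bar\nu_t \equiv L$ for all $t < 0$; constancy in $t$ combined with the inherited Bochner monotonicity forces $\nabla^2 \bar F \equiv 0$ in $L^2(d\bar\nu_t\, dt)$, so $\bar F$ is affine on the regular part of the tangent shrinker.

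It remains to exclude the trivial case $L = 0$. Integrating the $R$-monotonicity over $s \in (-\infty, 0)$,
\[
    R(x^*) - R_{-\infty} \;=\; 2 \int_{-\infty}^0 \int |\Ric|^2 \, d\nu_s \, ds.
\]
Non-Ricci-flatness gives $\Ric \not\equiv 0$, while the strong maximum principle for the conjugate heat equation forces $\nu_s > 0$ on all of $M$ for each $s < 0$, so the right-hand side is strictly positive. Hence $R_{-\infty} < R(x^*) \le 1$, giving $L > 0$. A nonzero affine function on a shrinking metric soliton then yields a splitting $\bar{\mathcal{X}} \cong \IR \times \bar{\mathcal{X}}'$, with $\bar F$ realizing the $\IR$-coordinate. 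The hard part will be the rigorous limit passage within Bamler's $\mathbb{F}$-convergence framework: one must justify the convergence $\int |\nabla F^i|^2 \, d\nu^i_t \to \int |\nabla \bar F|^2 \, d\bar\nu_t$ on the possibly singular tangent flow, validate the limit Bochner identity there, and invoke a splitting criterion for singular shrinking solitons. These technical points are handled by Bamler's heat-kernel convergence theory and parallel the four-dimensional arguments of \cite{BCDMZ21}.
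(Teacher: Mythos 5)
Your proposal is a genuinely different route from the paper's, and while it is plausible in outline, it is more roundabout and leaves nontrivial gaps exactly where you flag them. The paper (following \cite{BCDMZ21}) proceeds far more directly. It picks a point $z_\infty\in\mathcal{R}_\infty$ of smooth convergence, with $z_i\to z_\infty$, and normalizes
\[
\tilde f_i:=\beta_i\big(f_{-\tau_i}(\cdot)-f_{-\tau_i}(z_i)\big),\qquad
\beta_i^{-2}:=|df|^2_{g^i_{-1}}(z_i)=\tau_i\big(1-R_{g_{-\tau_i}}(z_i)\big)\to\infty,
\]
so that $|d\tilde f_i|_{g^i_{-1}}(z_i)=1$ exactly. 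The punchline is that
\[
\nabla^2_{g^i_{-1}}\tilde f_i=\beta_i\,\Ric_{g^i_{-1}}\longrightarrow 0
\]
locally uniformly on compact subsets of $\mathcal{R}_\infty$, simply because $\beta_i\to 0$ and $\Ric_{g^i_{-1}}$ converges to a smooth limit there; higher-order Shi estimates give the $C^k$ bounds, and the local smooth limit $\tilde f_\infty$ is a nonconstant parallel-gradient function on $\mathcal{R}_{\infty,-1}$. Then \cite[Theorem 15.28(c)]{Bam20c} upgrades the splitting from the regular part to all of $\XX_{-1}$, and the metric soliton structure propagates it to the whole flow. No heat equation, no Bochner formula, no conjugate-heat-kernel integrals are needed.

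Your route---integrating $|\nabla F|^2$ against the conjugate heat kernel, invoking the monotonicity $\frac{d}{ds}\int|\nabla F|^2\,d\nu_s=-2\int|\nabla^2F|^2\,d\nu_s$, showing the limit $L\in(0,1]$, and passing the Bochner identity to the possibly singular $\mathbb{F}$-limit---is the more technical half of \cite{BCDMZ21}'s toolkit and is not needed for this statement. Two specific concerns: (i) you never establish that the $\mathbb{F}$-limit carries a well-defined function $\bar F$ on which the Bochner identity can be paired with $\bar\nu_t$ across the singular set, which is exactly the point you defer to ``the hard part''; (ii) the $L>0$ argument via non-Ricci-flatness is entirely avoidable, since at a point $z_i\to z_\infty\in\mathcal{R}_\infty$ of smooth convergence one has $|dF^i|^2_{g^i_{-1}}(z_i)=1-R_{g_{-\tau_i}}(z_i)\to1$ for free, because $R_{g_{-\tau_i}}(z_i)=\tau_i^{-1}R_{g^i_{-1}}(z_i)\to 0$. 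More to the point, with your scaling the pointwise identity
\[
\nabla^2_{g^i_{-1}}F^i=\tau_i^{-1/2}\,\Ric_{g^i_{-1}}\longrightarrow 0
\]
already holds locally on $\mathcal{R}_\infty$; noticing this would have collapsed your whole Bochner-monotonicity apparatus into the paper's one-line observation. In short: correct target, correct ingredients, but a detour through integral monotonicity that introduces genuine technical debt where a pointwise argument suffices.
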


Noted that the tangent flow at infinity \cite{Bam20c} is an $\mathbb{F}$-limit of a scaled sequence of Ricci flows; it is a metric soliton which possibly has a singular set with codimension at least four. Tangent flow at infinity is not unique a priori; it depends on the scaling sequence but is independent of the base point \cite{CMZ21a}. Therefore, if one tangent flow at infinity is smooth, there could possibly be other non-smooth tangent flows. However, we shall prove the uniqueness of tangent flow for the canonical flow of a steady soliton, given that one of its tangent flows is a cylinder. This generalizes a result in \cite{BCDMZ21}. We remark that the following theorem is not a direct consequence of \cite{CM22}, since tangent flows are metric solitons instead of smooth solitons.

\begin{Theorem}[{= Theorem \ref{thm: cyl flow}}]
\label{thm: main cyl flow}
    Let $(M^n,g,f)$  be a complete non-Ricci-flat noncollapsed steady soliton with bounded curvature.
    If one tangent flow at infinity is $(\IS^{n-1}/\Gamma)\times \IR$, then it is the unique one. Furthermore, for any $\eps>0$, there is a compact set $K_\eps$ such that every point outside $K_\eps$ is a center of an $\eps$-neck. Moreover, outside $K_\eps$ we have,
    \[
        \Rm>0,\qquad
        r R = \tfrac{n-1}{2} + o(1),
    \]
    as $x\to \infty,$
    where $r(x)=|ox|$ for some  point $o\in M.$
    If, in addition, $\sec\ge 0$ and $\Ric>0,$ then $\Gamma=1$ and $(M^n,g,f)$ is isometric to the Bryant soliton up to rescaling.

\end{Theorem}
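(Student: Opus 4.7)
The plan is to establish the four claims in order. Throughout $(g_t)_{t\in\IR}$ denotes the canonical flow, $o\in M$ a fixed point, and $\mu_\infty=\lim_{\tau\to\infty}\NN_{o,0}(\tau)$. For uniqueness of the tangent flow, I combine Theorem \ref{thm: split tan flow} with entropy rigidity: every tangent flow at infinity splits off a line, so it has the form $\NN\times\IR$ for some metric soliton $\NN$; its Gaussian density equals $\mu_\infty$, which by hypothesis equals the density of the round cylinder $(\IS^{n-1}/\Gamma)\times\IR$. Since the round cylinder is a rigid minimizer of shrinker entropy among shrinkers with a fixed line splitting, $\NN$ must be isometric to $\IS^{n-1}/\Gamma$; possibly singular tangent flows are excluded since their singular set has codimension at least four while the cylinder is smooth.

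For the $\eps$-neck decomposition, I argue by contradiction: if $x_i\to\infty$ fails to center $\eps_0$-necks, the parabolic rescalings $(M,R(x_i)g_{R(x_i)^{-1}t},(x_i,0))$ are noncollapsed with uniformly bounded curvature, hence subconverge in Cheeger--Gromov--Hamilton sense to a smooth noncollapsed ancient flow; taking the same sequence as an $\mathbb{F}$-limit produces a tangent flow at infinity, so by uniqueness this limit equals $(\IS^{n-1}/\Gamma)\times\IR$, contradicting the $\eps_0$-neck failure. On each $\eps$-neck the eigenvalues of $\Rm$ are close to those of $\IS^{n-1}\times\IR$, so the spherical ones are uniformly positive; the remaining possibly degenerate eigenvalues are handled via the soliton equation $\Ric=\nabla^2 f$ combined with Hamilton's strong maximum principle for $\Rm$ along the canonical flow, whose nullity would define a parallel distribution that would force a further metric splitting of the soliton, contradicting non-Ricci-flatness.

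For the scalar asymptotic, use $R+|\nabla f|^2=1$ so that $|\nabla f|\to 1$ and $f(x)=r(x)+o(r)$ along integral curves of $\nabla f$ near infinity. Modeling each $\eps$-neck as a warped product $d\sigma^2+\rho(\sigma)^2 g_{\IS^{n-1}}$ to leading order and inserting this into $\Ric=\nabla^2 f$ together with $R+|\nabla f|^2=1$ gives coupled ODEs whose leading-order analysis yields $\rho^2\sim 2(n-2)\,r$, and hence $R\sim (n-1)(n-2)/\rho^2\sim (n-1)/(2r)$, which is the claimed asymptotic. Finally, under $\sec\ge 0$ and $\Ric>0$, the Cheeger--Gromoll soul theorem gives $M\cong\IR^n$, in particular simply connected, so $\Gamma=1$; the steady soliton then satisfies the hypotheses of \cite[Theorem 1.2]{Bre14}, concluding that $(M^n,g,f)$ is the Bryant soliton up to rescaling.

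I expect the main obstacle to be the uniqueness step: while Theorem \ref{thm: split tan flow} and entropy monotonicity are at hand, running the cylindrical rigidity argument in Bamler's metric-measure category --- so as to rule out tangent flows whose density matches the cylinder's but whose geometry differs, including possibly singular ones --- requires careful use of Bamler's regularity and classification theory. The $\Rm>0$ assertion in step (ii) is the other delicate point, since the kernel-propagation argument needs nonnegativity of $\Rm$ on the neck, which itself must be extracted from the soliton equation rather than assumed.
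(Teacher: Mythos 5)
Your plan gets the architecture of the final step right (Theorem~\ref{thm: split tan flow} for the line splitting, Brendle's \cite[Theorem 1.2]{Bre14} for the rigidity once $\Gamma=1$), but the crucial uniqueness step contains a genuine gap, and the intermediate claims diverge from the paper's route.

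The paper does \emph{not} obtain uniqueness by entropy rigidity. Your argument hinges on the assertion that the round cylinder $(\IS^{n-1}/\Gamma)\times\IR$ is a ``rigid minimizer of shrinker entropy among shrinkers with a fixed line splitting.'' This is not an available theorem, and there is no reason to believe it: the class of shrinkers splitting off a line is large (it contains $\IS^{n-2}\times\IR^2$, $\IS^{n-3}\times\IR^3$, etc., and a priori singular metric solitons), and two distinct shrinkers can share a Gaussian density; moreover, the cylinder is certainly not a \emph{minimizer} (flat $\IR^n$ has density~$1$). What the paper actually uses is a connectivity-plus-isolation argument: Theorem~\ref{thm: split tan flow} gives that every tangent flow is of the form $\XX\times\IR$; the argument of \cite[Proposition 3.2]{BCDMZ21} shows that any two tangent flows at infinity of the same soliton can be ``almost connected'' by a chain $\{\XX^i\}$ of tangent flows that are pairwise $\eps$-close in $\mathbb{F}$-distance; and Lemma~\ref{lem: sphere isolated} (proved via \cite[Corollary 1.3]{CMZ23} to upgrade $\mathbb{F}$-convergence to smooth convergence, then Huisken's rigidity \cite{Hu85}) shows the cylinder is isolated among such metric solitons. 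The chain therefore must be constant, forcing every tangent flow to equal the cylinder. Without the connectivity ingredient, ``same entropy'' alone does not give uniqueness; this is the missing idea.

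Two secondary issues. First, in your $\eps$-neck step you pass from a Cheeger--Gromov limit of $(M,R(x_i)g_{R(x_i)^{-1}t},x_i)$ to a tangent flow at infinity as if these were a priori the same object; they are not, since a tangent flow is an $\mathbb{F}$-limit based at conjugate heat kernel measures with scales $\tau_i\to\infty$ that need not be comparable to $R(x_i)^{-1}$, and the $H_n$-centers need not be at $x_i$. The paper sidesteps this by invoking the detailed argument of \cite{BCDMZ21}, which ``relies only on the fact that the tangent flow at infinity is unique,'' rather than re-deriving it. Second, for $\Gamma=1$ the paper argues via \cite[Lemma 2.1]{DZ21} that the level sets of $f$ are diffeomorphic to $\IS^{n-1}$; your soul-theorem argument (``$\sec\ge0$ and $\Ric>0$ imply $M\cong\IR^n$'') needs a justification that the soul is a point, which does not follow directly from $\Ric>0$ without extra input. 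Your warped-product ODE heuristic for $rR\to (n-1)/2$ and the strong-maximum-principle idea for $\Rm>0$ are in the right spirit, but the paper treats both as already established in \cite{BCDMZ21} once uniqueness is in hand, and your sketches would need the rigor found there.
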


\section{Preliminaries}


\def \CC {\mathcal{C}}
We denote by
\[
    \CC_f:=\{x: \nabla f(x)=0\}
\]
the set of all critical points of $f.$



\def \bn {\mathbf{n}}
\begin{Lemma}
\label{lem: sec level set}
    Let $(M^n,g,f)$ be a steady soliton with $\Ric>0.$ 
    Then $|\CC_f|\le 1$, i.e. $f$ has at most $1$ critical point.
    There is a compact subset $K\subset M$, such that
    if  $M_s:=\{f=s\}$ does not intersect $K$, then $M_s$ is a smooth manifold and
    \[
        \sec_{M_s}>\sec_M.
    \]
    As a consequence, if $n=4,$ then $M_s$ is diffeomorphic to either $\IS^3$ or $\IR^3.$
\end{Lemma}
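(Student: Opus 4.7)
The plan is to exploit the fact that the soliton equation combined with $\Ric>0$ makes $f$ strictly geodesically convex ($\nabla^2 f=\Ric>0$), and to extract the three conclusions from this convexity together with the Gauss equation.

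For the critical-point count, I would argue along geodesics: if $p_1\ne p_2$ were two critical points, then for any geodesic $\gamma:[0,L]\to M$ from $p_1$ to $p_2$ the composition $h(t):=f\circ\gamma(t)$ would be a smooth strictly convex function on $[0,L]$ with $h'(0)=h'(L)=0$, a contradiction. Hence $|\CC_f|\le 1$, and I set $K:=\CC_f$, which is compact (at most a single point). Any level set $M_s$ disjoint from $K$ has $\nabla f\ne 0$ on it, so the implicit function theorem makes $M_s$ a smooth embedded hypersurface with unit normal $\nu=\nabla f/|\nabla f|$, and its second fundamental form
\[
II(X,Y)=\frac{\nabla^2 f(X,Y)}{|\nabla f|}=\frac{\Ric(X,Y)}{|\nabla f|},\qquad X,Y\in TM_s,
\]
is positive definite. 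The Gauss equation then reads
\[
\sec_{M_s}(X\wedge Y)-\sec_M(X\wedge Y)=II(X,X)\,II(Y,Y)-II(X,Y)^2
\]
on orthonormal $X,Y\in T_xM_s$, and the right-hand side is strictly positive by the strict Cauchy--Schwarz inequality for the positive-definite form $II$ restricted to $\mathrm{span}(X,Y)$. Hence $\sec_{M_s}>\sec_M$ pointwise.

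For the $n=4$ consequence, $M_s$ is then a smooth complete 3-manifold. Under the paper's standing hypothesis $\sec_M\ge 0$, the previous step upgrades to $\sec_{M_s}>0$. If $M_s$ is noncompact, the Gromoll--Meyer soul theorem gives $M_s\cong\IR^3$. If $M_s$ is compact, it bounds the sublevel set $\Omega_s:=\{f\le s\}$, which is geodesically convex because $f$ is strictly convex along every geodesic; in this case $f$ attains its minimum on $\Omega_s$ at the unique critical point $p\in\CC_f$, and the downward gradient flow of $f$ retracts $\Omega_s$ smoothly onto $\{p\}$. Combined with the Morse lemma at $p$, where $\nabla^2 f|_p=\Ric|_p>0$, this identifies $\Omega_s$ with a smooth closed $4$-ball, whence $M_s=\partial\Omega_s\cong\IS^3$.

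The delicate step is precisely the last identification $M_s\cong\IS^3$ in the compact case: merely invoking Hamilton's classification would yield $M_s\cong\IS^3/\Gamma$ and leave open exotic spherical space forms such as the Poincar\'e homology sphere. It is the explicit convex-region/gradient-flow argument above that pins down $\Gamma$ trivial, but executing it requires verifying that $\Omega_s$ really is compact and that $f$ attains its minimum---a point where strict convexity of $f$ along geodesics, together with the ambient steady-soliton geometry, does the work.
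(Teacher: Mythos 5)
Your argument for the first two parts---$|\CC_f|\le 1$ from strict convexity of $f$, and $\sec_{M_s}>\sec_M$ from the Gauss equation plus strict Cauchy--Schwarz applied to $II=|\nabla f|^{-1}\Ric>0$---is the same as the paper's. For the $n=4$ consequence the paper asserts $\Ric_{M_s}>0$ and then appeals to \cite[Lemma~2.1]{DZ21} (which handles the case $\CC_f\neq\emptyset$, giving $M_s\cong\IS^{n-1}$) together with classical $3$-manifold results, whereas you argue directly via Gromoll--Meyer in the noncompact case and a gradient-flow/Morse-lemma argument in the compact case. These routes are essentially equivalent, and you are right to flag that Hamilton's theorem alone yields only $\IS^3/\Gamma$; pinning down $\Gamma=1$ genuinely requires the Morse-theoretic structure coming from the single nondegenerate minimum.

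Two remarks. First, you are correct that $\sec_M\ge 0$ is being used: the lemma as stated assumes only $\Ric>0$, and under that hypothesis alone neither $\Ric_{M_s}>0$ nor $\sec_{M_s}>0$ follows, since the Gauss equation gives $\Ric_{M_s}(X,X)=\Ric_M(X,X)-\sec_M(X,\mathbf{n})+(\text{positive})$ with $\mathbf{n}=\nabla f/|\nabla f|$, and $\sec_M(X,\mathbf{n})$ is not controlled by $\Ric_M$. The paper's proof leaves this implicit, but every place the lemma is applied does carry the standing assumption $\sec\ge 0$, so the content is fine. Second, you leave compactness of $\Omega_s=\{f\le s\}$ and the existence of a critical point as an acknowledged gap; this is exactly what the citation to \cite[Lemma~2.1]{DZ21} supplies in the paper. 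If you want to close it yourself: $f$ is unbounded above by \cite[Corollary~5.2]{MS13}, and $M$ has one end by \cite[Corollary~1.1]{MW11}; hence if $M_s$ is compact, the unique unbounded component of $M\setminus M_s$ must lie in $\{f>s\}$, so $\Omega_s$ is contained in the union of $M_s$ with the bounded components of $M\setminus M_s$ and is therefore compact, and the minimum of $f$ on $\Omega_s$ is an interior critical point, giving $\CC_f\neq\emptyset$. With that supplied, your gradient-flow retraction and the Morse lemma at the unique minimum indeed identify $\Omega_s$ with a closed $4$-ball.
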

\begin{proof}
$f$ is strictly convex, because $\nabla^2f=\Ric>0.$ 
Thus, if $\CC_f\neq \emptyset$, then $|\CC_f|=1$.
In this case, by \cite[Lemma 2.1]{DZ21}, each level set is diffeomorphic to $\IS^{n-1}.$

Now for both cases, 
$|\CC_f|=0$ or $1,$ we may assume that $|\nabla f|>0,$ outside $B(o,1),$ for some $o\in M.$ Since $\nabla^2f=\Ric>0,$ each level set $M_s=\{f=s\}$ is a smooth manifold.

    We write $\bn:=\frac{\nabla f}{|\nabla f|}.$ The second fundamental form ${\rm II}$ of $M_s$ is given by
    \[
        {\rm II}(X,Y)=
        \langle \nabla_X \bn, Y \rangle=\frac{1}{|\nabla f|}\Ric(X,Y),
    \]
    for any $X,Y$ tangent to $M_s.$
    Thus, ${\rm II}>0$. By the Gauss formula,
    \begin{align*}
        \sec_{M_s}(X,Y)
        &= R(X,Y,Y,X) + {\rm II}(X,X){\rm II}(Y,Y)
        - \left({\rm II}(X,Y)\right)^2
        > \sec_M(X,Y),
    \end{align*}
    for any unit vectors $X,Y$ tangent to $M_s$ with $X\perp Y.$ Here, we used the Cauchy-Schwarz inequality, since ${\rm II}>0,$ and $X\perp Y.$

    In particular, if $n=4, M_s$ is a $3$-manifold with $\Ric_{M_s}>0.$ Thus, $M_s$ is diffeomorphic to $\IS^3$ or $\IR^3$ by classical results.
\end{proof}

In dimension $4$, let us recall the following theorem in \cite{BCDMZ21}. Although the original statement was formulated for steady solitons that arise as singularity models, yet in the justifications made in \cite[Appendix A]{Bam21},  Bamler indicates that all the theories developed in \cite{Bam20a} and \cite{Bam20c} can be generalized to complete Ricci flows with bounded curvatures on compact intervals.

\begin{Theorem}[{\cite{BCDMZ21}}]
\label{thm: tan flow oo}
    Let $(M^4,g,f)$ be a complete nontrivial and noncollapsed steady gradient Ricci soliton with bounded curvature. Then its tangent flow at inifinity is unique and falls into one of the following:
    \[
        (\IS^3/\Gamma)\times \IR,\quad
        \IS^2\times\IR^2,\quad
        (\IS^2\times_{\mathbb{Z}_2} \IR)\times \IR.
    \]
\end{Theorem}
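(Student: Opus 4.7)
The plan is to combine Bamler's $\mathbb F$-compactness and partial regularity theory \cite{Bam20a,Bam20c} with a potential-function argument forcing a line splitting. Fixing $\tau_i\to\infty$ and any basepoint $x_0$, I would first apply Bamler's compactness to the parabolically rescaled flows $(M,\tau_i^{-1}g_{\tau_i t},(x_0,0))$: noncollapsedness gives uniformly bounded Nash entropy, while bounded curvature on each compact time window of the canonical flow permits the extension of Bamler's theory indicated in the appendix of \cite{Bam21}. Passing to a subsequence, we obtain $\mathbb F$-convergence to a metric shrinking soliton whose singular set has codimension at least four; in dimension four the singular set in each time slice is therefore discrete, so the regular part is smooth away from isolated orbifold points.

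The next step is to extract a splitting line. The steady soliton identities $\nabla^2 f=\Ric$ and $R+|\nabla f|^2=1$, together with the fact that $R\to 0$ at infinity (available for noncollapsed non-Ricci-flat steady solitons with bounded curvature), suggest that the normalized potential $f_i:=(f-f(x_0))/\sqrt{\tau_i}$ converges, along the $\mathbb F$-limit, to a function $f_\infty$ satisfying $\nabla^2 f_\infty=0$ and $|\nabla f_\infty|=1$ on the regular part of the tangent flow. Integrating $\nabla f_\infty$ then splits off an $\IR$-factor. Conceptually, this reflects the fact that time translation in the canonical flow is implemented by the diffeomorphisms $\Phi_t$, whose generator is $-\nabla f$; after parabolic rescaling by $\tau_i\to\infty$ this symmetry becomes a translational isometry of the limit.

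Once one line is split off, the orthogonal factor $N^3$ is a three-dimensional noncollapsed shrinking metric soliton with at most isolated orbifold singularities. By the classification of three-dimensional $\kappa$-noncollapsed shrinkers (Perelman, Naber, Ni--Wallach, Cao--Chen--Zhu, together with the orbifold extension), $N^3$ must be one of $\IR^3$, $\IS^3/\Gamma$, $\IS^2\times\IR$, or $\IS^2\times_{\mathbb Z_2}\IR$. A Euclidean factor would force the whole tangent flow to be $\IR^4$ and hence $\mu_\infty=0$, contradicting non-Ricci-flatness by the rigidity case of Nash-entropy monotonicity; only the three claimed possibilities remain.

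The main obstacle is the uniqueness statement, since different subsequences could a priori produce different tangent flows. To close this gap I would use $\mathbb F$-continuity of the map $\tau\mapsto(M,\tau^{-1}g_{\tau t})$ together with the fact that the three candidate tangent flows are $\mathbb F$-isolated among four-dimensional noncollapsed shrinking metric solitons: the limit set of this map as $\tau\to\infty$ is both connected (by continuity) and contained in a discrete set (by isolation), hence a single point. Establishing this $\mathbb F$-isolation is the technical heart of the argument, relying on rigidity of each candidate within the smooth/orbifold shrinker class via entropy rigidity, and this is exactly where the proof is most delicate.
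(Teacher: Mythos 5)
The paper itself does not prove this statement; it is quoted from \cite{BCDMZ21}. The closest thing in this paper is Theorem~\ref{thm: split tan flow}, which the authors describe as ``basically identical'' to the argument in \cite{BCDMZ21}, and Theorem~\ref{thm: cyl flow}/Lemma~\ref{lem: sphere isolated}, which handle the uniqueness part in the sphere case. Your reconstruction follows the right overall shape (Bamler compactness, potential-function splitting, 3d classification, then uniqueness via connectedness plus isolation), but several steps are imprecise or under-justified.

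First, the splitting step. You claim ``$R\to 0$ at infinity'' is available for a noncollapsed non-Ricci-flat steady soliton with bounded curvature; in this generality it is not. What the proof actually uses is that at a point $z_i$ of smooth convergence (provided by \cite[Theorem~2.3]{Bam20c}), $\tau_i R_{g_{-\tau_i}}(z_i)\to R_{g^\infty_{-1}}(z_\infty)<\infty$, hence $R_{g_{-\tau_i}}(z_i)\to 0$. Correspondingly, the normalized potential should be centered at $z_i$ and use the time-shifted potential $f_{-\tau_i}=f\circ\Phi_{-\tau_i}$ with normalization $\beta_i := |df|_{g^i_{-1}}^{-1}(z_i)$; you wrote $(f-f(x_0))/\sqrt{\tau_i}$ at a fixed basepoint. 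Since $\beta_i^{-2}=\tau_i(1-R_{g_{-\tau_i}}(z_i))\sim\tau_i$ this is asymptotically the same scale, but the argument must be run at the points $z_i$, not at $x_0$, and the ``$R\to 0$'' statement must be derived rather than assumed. Second, you describe the orthogonal factor $N^3$ as a 3d shrinker ``with at most isolated orbifold singularities.'' In fact $N^3$ must be smooth: Bamler's codimension-4 estimate makes the singular set of the 4d tangent flow $0$-dimensional in each time slice, and once you have split $\mathcal X=N^3\times\IR$, a singular point of $N^3$ would produce a $1$-dimensional singular set in $\mathcal X$, a contradiction. Smoothness of $N^3$ is what lets you invoke the classification of smooth 3d noncollapsed shrinkers.

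The uniqueness step is where the real gap sits, as you acknowledge. Your plan --- the set of tangent flows as $\tau\to\infty$ is $\mathbb F$-connected and contained in an $\mathbb F$-isolated set, hence a singleton --- is the same strategy the present paper uses in Lemma~\ref{lem: sphere isolated} and Theorem~\ref{thm: cyl flow}, but those are proved only for the $(\IS^{n-1}/\Gamma)\times\IR$ family, via the combination of local smooth convergence of $\mathbb F$-limits \cite{CMZ23} with Huisken's rigidity for metrics near the round sphere. To prove the full statement you would additionally need isolation statements for $\IS^2\times\IR^2$ and $(\IS^2\times_{\mathbb Z_2}\IR)\times\IR$ within the class of 4d metric shrinking solitons split by a line and with entropy bounded below; Huisken's theorem does not directly apply there, and one also cannot simply quote ``entropy rigidity'' without verifying that the entropies of the finitely many admissible candidates (the number of $\Gamma$'s is constrained by $\mu_\infty$) are pairwise distinct. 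This isolation/rigidity step is what you rightly identify as the technical heart and it is precisely what is not supplied in your sketch.
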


\section{Growth of the Potential Function}

\def \CC {\mathcal{C}_f}
\def \Rc {{\rm Ric}}

This section is taken partly from the second named author's dissertation \cite[\S 7.2]{M22}.

For a steady gradient Ricci soliton $(M^n,g,f)$, it is obvious that if $R$ has uniform decay, then the critical set of $f$
\[
	\CC := \{x\,|\,\nabla f(x)=0\}
\]
is nonempty and compact. The converse is incorrect. For example, consider the product of two Bryant solitons.
Thus, it is strictly weaker than the uniform decay of $R$ to assume that $\CC$ is nonempty and compact.



The following arguments are indeed contained in \cite[Section 9]{CDM22}. We shall record them for future use.
\begin{Proposition}
\label{prop: linear growth Ric}
Suppose that $\Rc\ge 0$ everywhere on $M$.
If $\CC$ is nonempty and compact, then $f$ has linear growth.
\end{Proposition}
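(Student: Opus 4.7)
The plan is to establish the two-sided bound $c\, d(p,x) - C \leq f(x) \leq d(p,x) + f(p)$ for a fixed basepoint $p \in \mathcal{C}_f$. The upper bound is immediate from the soliton identity: $\Ric \geq 0$ implies $R \geq 0$, so $R + |\nabla f|^2 = 1$ forces $|\nabla f| \leq 1$ and hence $f$ is $1$-Lipschitz.

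For the lower bound, the key structural input is that $\nabla^2 f = \Ric \geq 0$, so $f$ is geodesically convex, and in particular $\mathcal{C}_f = \{f = \min f\}$; for any unit-speed geodesic $\gamma$ starting at $p \in \mathcal{C}_f$, the function $h(s) := f(\gamma(s))$ is convex with $h'(0) = 0$. I would argue by contradiction: suppose there exist $x_k \to \infty$ with $f(x_k)/L_k \to 0$, where $L_k := d(p, x_k) \to \infty$, and let $\gamma_k : [0, L_k] \to M$ be minimizing unit-speed geodesics from $p$ to $x_k$ with initial velocities $v_k \in U_p M$. Convexity of $h_k(s) := f(\gamma_k(s))$ together with $h_k'(0) = 0$ makes $h_k'$ nondecreasing, and the tangent-line estimate then gives, for any fixed $s_0$ and all $k$ large,
\[
    h_k'(s_0) \;\leq\; \frac{f(x_k) - \min f}{L_k - s_0} \;\xrightarrow[k\to\infty]{}\; 0.
\]
Passing to a subsequence along which $v_k \to v_\infty \in U_p M$, ODE continuous dependence yields $\gamma_k \to \gamma_\infty$ in $C^1_{\mathrm{loc}}$, where $\gamma_\infty$ is the geodesic with $\gamma_\infty'(0) = v_\infty$ (defined on $[0, \infty)$ by completeness of $M$); since each $\gamma_k$ is minimizing, the limit is a ray. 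On the other hand $h_\infty'(s) = \lim_k h_k'(s) = 0$ for every $s$, so $h_\infty \equiv \min f$ and $\gamma_\infty \subset \{f = \min f\} = \mathcal{C}_f$, contradicting the compactness of $\mathcal{C}_f$ because a ray has infinite length.

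The main obstacle is really just assembling the final contradiction cleanly: the ingredients (convexity and monotonicity of $h_k'$, compactness of the unit sphere $U_p M$, continuous dependence of geodesics, and the fact that a $C^1_{\mathrm{loc}}$-limit of minimizing geodesics on lengthening intervals is a ray) are all classical. What the argument genuinely needs is that $\mathcal{C}_f$ is \emph{compact}, not merely nonempty---if $\mathcal{C}_f$ contained an infinite ray (as happens, e.g., when the soliton splits off an $\mathbb{R}$-factor), the limiting geodesic $\gamma_\infty$ could legitimately live inside $\mathcal{C}_f$ and the contradiction would evaporate, which is consistent with the hypotheses of the proposition.
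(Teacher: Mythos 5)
Your proof is correct, but it takes a genuinely different route from the paper's. The paper works with the gradient flow $\Phi_t$ of $-\nabla f$: it shows that $\Ric\ge 0$ forces $\Phi_t(x)$ to stay bounded for $t\ge 0$ and to accumulate on $\mathcal{C}_f$, so from any $x$ far out the forward flow eventually re-enters a fixed ball $B(o,r_0)\supset\mathcal{C}_f$; integrating $|\nabla f|^2=|\partial_s\Phi_s|\cdot|\nabla f|$ along that flow segment and using that $|\nabla f|$ is nonincreasing along the flow (since $\partial_t|\nabla f|^2=-2\Ric(\nabla f,\nabla f)\le 0$) yields the explicit bound $f(x)\ge\theta\vts|ox|-(C+\theta r_0)$ with $\theta=\min_{\partial B(o,r_0)}|\nabla f|$. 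Your argument is static rather than dynamical: you exploit geodesic convexity of $f$ along minimizing geodesics from a critical point, the tangent-line inequality $h_k'(s_0)\le (f(x_k)-\min f)/(L_k-s_0)$, and a compactness/limit-ray contradiction against the compactness of $\mathcal{C}_f$. Both proofs are sound and use the same two structural facts ($\nabla^2 f=\Ric\ge 0$ and compactness of $\mathcal{C}_f$); the paper's flow argument has the advantage of producing an explicit linear lower bound with a quantitative slope $\theta$, whereas yours is a clean soft argument whose only output is the existence of \emph{some} positive slope, but which makes the role of the compactness hypothesis especially transparent (you correctly note that a ray inside $\mathcal{C}_f$ would be the obstruction if $\mathcal{C}_f$ were noncompact, e.g. on a product with $\mathbb{R}$).
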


\begin{proof}

Pick $o\in \CC.$ 
Suppose that $\CC\subset B(o,A)$ for some $A>0.$
\\


\noindent
\textbf{Claim:} For any $r\ge 2A$ and any $x\notin \bar B(o,r),$ there is $t_x>0,$ such that 
\[
	\Phi_{t_x}(x)\in \partial B(o,r),;
	\qquad \Phi_{t}(x)\notin \bar B(o,r),,\quad t\in [0,t_x).
\]
\begin{proof}[Proof of the Claim]
Since $\Rc\ge 0$ on $M$, for $t\ge 0$ and any $x\in M,$
\[
	\dist(o,\Phi_t(x))=|ox|_t \le |ox|,
\]
by the standard distance distorsion estimate. (See, e.g., \cite[Lemma 17.13]{Ha95}.)
So the curve $\Phi_t(x)$ stays bounded for $t\ge 0.$

We show that $\Phi_t(x)$ accumulates on $\CC.$  Suppose to the contrary that there is a subsequence $t_j\to \infty, \Phi_{t_j}(x)\to y$ for some $y$ but $y\notin \CC.$ 
Let $2c=|\nabla f|^2(y)>0.$ For $t\ge 0,$
\[
	\partial_t |\nabla f|^2(\Phi_t(x)) = -2 \Rc(\nabla f,\nabla f)(\Phi_t(x))\le 0.
\]
Thus for large $j,$
\[
	f(\Phi_{t_j}(x))-f(x) = -\int_{0}^{t_j} |\nabla f|^2(\Phi_s(x))\, ds 
	\le -|\nabla f|^2(\Phi_{t_j}(x))t_j\le -c t_j,
\]
which is impossible since $f(\Phi_{t_j}(x))\to f(y)$ while $t_j\to \infty$. 
Hence $\Phi_t(x)$ accumulates on $\CC.$ Since $\CC\subset B(o,A),$ there must be such a number $t_x>0$ for each $x\notin \bar B(o,r),.$


\end{proof}

Now we fix $r_0\ge 2A.$
Let $C, \theta>0$ be constants such that
\[
	\min_{ \partial B(o,r_0)} f\ge -C,\quad
	\min_{\partial B(o,r_0)} |\nabla f|\ge \theta.
\]
Then
\[
	\begin{split}
	f(x) &= f(\Phi_{t_x}(x)) + \int_0^{t_x} |\nabla f|^2(\Phi_s(x))ds\\
	&\ge -C + |\nabla f|(\Phi_{t_x}(x)) \int_0^{t_x} |\partial_s\Phi_s(x)| ds \\
	&\ge -C + \theta \cdot d(x, \partial B(o,r_0))
	\ge \theta |ox| - (C+\theta r_0).
	\end{split}
\]
Thus, $f$ has linear growth.
\end{proof}

Recall that a function is said to be proper, if $\{f\le c\}$ is compact, for any $c\in \IR.$

\begin{Lemma}
Suppose that $\Rc\ge 0$ outside a compact set 
and $f$ is proper. Then $\CC$ is nonempty and compact.
\end{Lemma}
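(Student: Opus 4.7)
The plan is to handle nonemptiness and compactness separately. For nonemptiness, I would use properness alone: since $\{f\le c\}$ is compact for every $c$, applying the finite intersection property to nested nonempty compact sublevel sets $\{f\le c_n\}$ with $c_n\searrow \inf_M f$ rules out $\inf_M f=-\infty$, and a second application produces a minimizer $p\in M$. Any such minimizer is a critical point of $f$, so $p\in\CC$.

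For compactness, the plan is a geodesic convexity argument. Let $K\subset M$ be compact with $\Rc\ge 0$ on $M\setminus K$, and pick $o\in M$ and $A>0$ with $K\subset B(o,A)$. Given any $x\in\CC$ with $d(o,x)>A$, I would take a unit-speed minimizing geodesic $\gamma\colon[0,L]\to M$ from $o$ to $x$ and observe that for $t\in [A,L]$ the point $\gamma(t)$ lies at distance $t>A$ from $o$, so $\gamma(t)\notin K$ and
\[
(f\circ\gamma)''(t)=\nabla^2 f(\gamma'(t),\gamma'(t))=\Rc(\gamma'(t),\gamma'(t))\ge 0;
\]
thus $f\circ\gamma$ is convex on $[A,L]$. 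Since $\nabla f(x)=0$ makes $(f\circ\gamma)'(L)=0$, convexity forces $(f\circ\gamma)'\le 0$, hence $f\circ\gamma$ is nonincreasing on $[A,L]$, and
\[
f(x) \le f(\gamma(A)) \le \max_{\partial B(o,A)} f =: M_A.
\]
Therefore $\CC\cap\{d(o,\cdot)>A\}\subset\{f\le M_A\}$, which is compact by properness; together with the closed subset $\CC\cap \bar B(o,A)$ of the compact ball $\bar B(o,A)$, this shows $\CC$ is closed inside a compact set, so $\CC$ is compact.

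The only delicate point in this plan is noticing that at a critical point $x\in\CC$ the derivative $(f\circ\gamma)'$ vanishes at the \emph{far} endpoint $t=L$; this is what turns convexity into nonincreasingness in the direction we need (rather than the opposite) and produces the bound on $f(x)$. After that observation the argument is essentially routine, and no curvature identity beyond $\nabla^2 f = \Rc$ is required.
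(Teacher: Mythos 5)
Your proof is correct, and it takes a genuinely different and more elementary route than the paper. Both you and the paper get nonemptiness from properness by taking a minimizer. For compactness, the paper argues by contradiction: it invokes a lemma from \cite{CDM22} to choose geodesics between critical points that avoid the bad compact set, applies the Ricci flow distance--distortion formula and the invariance of distances between fixed points of the flow to force $\Ric(\dot\gamma,\dot\gamma)\equiv 0$ along such a geodesic, and then feeds this into the soliton identity $\nabla R=2\Ric(\nabla f,\cdot)$ together with $R+|\nabla f|^2=1$ to conclude $f$ is constant along the geodesic, contradicting properness. Your argument is direct and uses only the Hessian identity $\nabla^2 f=\Ric$: convexity of $f\circ\gamma$ on the portion of a minimizing geodesic outside $\bar B(o,A)$, combined with $(f\circ\gamma)'(L)=0$ at the critical endpoint, forces $f\circ\gamma$ to be nonincreasing on $[A,L]$, giving the explicit bound $f(x)\le\max_{\partial B(o,A)} f$ for every $x\in\CC$ outside $\bar B(o,A)$; properness and closedness of $\CC$ then finish the job. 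Your approach avoids the \cite{CDM22} lemma, avoids the Ricci flow and the distance--distortion formula, and avoids the normalization $R+|\nabla f|^2=1$ entirely; it also yields an explicit a priori sublevel set containing $\CC$. The only cosmetic slip is the phrase ``lies at distance $t>A$'': at $t=A$ the distance equals $A$, but since $K$ is inside the open ball $B(o,A)$ this still gives $\gamma(A)\notin K$, so the argument is unaffected.
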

\begin{proof}

Suppose that $\Ric\ge 0$ outside $B(o,A)$ for some $A>0,o\in M.$

As $f$ is proper, it has a minimum and thus $\CC$ is nonempty. Assume by contradiction that $\CC$ is not compact. Then there is a sequence $x_j\in \CC$ and $x_j\to \infty.$
By \cite[Lemma 9.7]{CDM22}, we may assume that there is $x_0\in \CC$ such that, for each $j\ge 1$, $x_j\in \CC$ and any minimal geodesic from $x_0$ to $x_j$ does not intersect $B(o,A).$ 
Let $\gamma_j:[0,\ell_j]\to M$ be a normal minimizing geodesic joining $x_0$ to $x_j$.
Note that $\Phi_t(x_j)\equiv x_j$ for all $t$ since $x_j\in \CC.$ Thus, for any $t\in\IR,$
\[
	|x_0x_j|_t = \dist(\Phi_t(x_0),\Phi_t(x_j)) = |x_0x_j|=\ell_j.
\]
We denote by $L_t(\gamma)$ the length of a curve $\gamma$ with respect to metric $g_t.$ 
For $t\in (-1,1),$
\[
	L_t(\gamma_j)\ge |x_0x_j|_t = \ell_j,\quad
	L_0(\gamma_j)=\ell_j. 
\]
So
\[
	\left.\frac{d}{dt}\right|_{t=0} L_t(\gamma_j) = 0.
\]
On the other hand, by the standard distance distorsion formula under Ricci flow \cite[Section 17]{Ha95},
\[
	0=\left.\frac{d}{dt}\right|_{t=0} L_t(\gamma_j)
	= - \int_{0}^{\ell_j} \Ric(\dot\gamma_j,\dot\gamma_j)\Big|_{\gamma_j(s)}\, ds.
\]
Since $\gamma_j$ does not intersect $B(o,A)$, we have $\Ric(\dot\gamma_j,\dot\gamma_j)|_{\gamma_j(s)}\ge 0$ for $s\in [0,\ell_j].$ 
It follows that
\[
	\Ric(\dot\gamma_j,\dot\gamma_j)|_{\gamma_j(s)} = 0,
\]
for all $s\in [0,\ell_j].$
So $\dot \gamma_j(s)$ is a null eigenvector of $\Rc|_{\gamma_j(s)}$ and 
\[
	\partial_s R(\gamma_j(s)) = - 2\Rc(\dot \gamma_j,\nabla f)(\gamma_j(s))=0,\quad
	\forall s\in [0,\ell_j].
\]
Then
\[
	|\nabla f|(\gamma_j(s)) \equiv |\nabla f|(x_0) = 0,\quad \forall s\in [0,\ell_j].
\]
Thus, for any $j\ge 1,$
\[
	f(x_j) = f(x_0),
\]
which is a contradiction to the assumption that $f$ is proper.

\end{proof}

In summary, we have the following.
\begin{Proposition}
\label{prop: linear growth cond}
Suppose $\Rc\ge 0$ on a complete steady gradient Ricci soliton $(M^n,g,f)$. Then the following are equivalent:
\begin{enumerate}[(a)]
	\item $f$ has linear growth;
	\item $\CC$ is nonempty and compact;
    \item $f$ is proper;
\end{enumerate}
\end{Proposition}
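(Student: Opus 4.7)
The plan is to close the chain of implications using the two preceding results and supply only the remaining trivial step. Proposition \ref{prop: linear growth Ric} gives (b) $\Rightarrow$ (a), and the lemma immediately above gives (c) $\Rightarrow$ (b). So the only thing left to verify is (a) $\Rightarrow$ (c), which will complete a cycle (a) $\Rightarrow$ (c) $\Rightarrow$ (b) $\Rightarrow$ (a).

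To prove (a) $\Rightarrow$ (c), I would unwind the notion of linear growth used in this paper: from Proposition \ref{prop: linear growth Ric} it is clear that ``linear growth'' means there exist a basepoint $o \in M$ and constants $a,b>0$ such that
\[
    f(x) \ge a\,|ox| - b \qquad \text{for every } x \in M.
\]
Given any $c \in \IR$, this lower bound forces $\{f \le c\} \subset \bar B(o, (c+b)/a)$. Since $f$ is continuous, $\{f \le c\}$ is closed in $M$; since $(M,g)$ is complete, Hopf--Rinow ensures that closed bounded subsets are compact. Hence $\{f \le c\}$ is compact and $f$ is proper. Note that the opposite inequality $f(x) \le f(o) + |ox|$ (and hence upper linear growth) is automatic from Hamilton's identity $R + |\nabla f|^2 = 1$ together with $R \ge 0$, but this direction is not needed for properness.

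There is essentially no obstacle: the proposition is a packaging statement that collects Proposition \ref{prop: linear growth Ric}, the preceding lemma, and the elementary observation above. The only point to be careful about is the convention for ``linear growth''; in this paper it must include the lower bound $f \ge a\,|ox|-b$, since that is both what Proposition \ref{prop: linear growth Ric} delivers and what is needed to force compactness of sublevel sets.
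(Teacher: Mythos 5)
Your proposal is correct and matches the paper's intent: the proposition is stated after the phrase ``In summary,'' with no further proof, precisely because (b) $\Rightarrow$ (a) is Proposition \ref{prop: linear growth Ric}, (c) $\Rightarrow$ (b) is the preceding lemma, and the remaining step (a) $\Rightarrow$ (c) is the elementary Hopf--Rinow observation you supply. You are also right to flag that ``linear growth'' in this paper must carry a lower bound $f \ge a\,|ox| - b$; that is exactly what Proposition \ref{prop: linear growth Ric} produces and what makes sublevel sets bounded.
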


\section{Geometry at Infinity}
In dimension $4$, we have finer descriptions of the geometry at infinity thanks to the recent full classification of $\kappa$-solutions in dimension $3.$

Our goal in this section is to prove the following.
\begin{Theorem}
\label{thm: possible limits 4D}
    Let $(M^4,g,f)$ be a noncollapsed steady soliton with $\sec\ge 0,\Ric>0.$ If its tangent flow at infinity is $\IS^2\times \IR^2,$ then it can only dimension reduces to
    \[
        \IS^2\times \IR\quad\text{ or }\quad
        M^{3}_{\rm Bry}.
    \]
\end{Theorem}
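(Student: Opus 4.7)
The plan is to combine the dimension-reduction theorem with the classification of 3d $\kappa$-solutions, then use the tangent-flow hypothesis to rule out the compact possibilities by tracking level sets of $f$.

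First I would apply Theorem \ref{thm: main}: for $x_i\to\infty$ with $r_i^{-2}=R(x_i)$, the rescaled flows subconverge in the Cheeger--Gromov--Hamilton sense to $(N^3,\bar g_t)\times(\IR,dz^2)$ on $(-\infty,0]$. Lower semicontinuity of sectional curvature preserves $\sec\ge 0$, and the normalization $\widetilde R(x_i,0)=1$ rules out flatness, so $(N^3,\bar g_t)$ is a nonflat 3d noncollapsed $\kappa$-solution with $\sec\ge 0$. By Hamilton's strong maximum principle applied to $\Ric_{\bar g_t}\ge 0$, either $(N^3,\bar g_t)$ itself splits off a line $N^3=N^2\times\IR$ (in which case $N^2$ is a nonflat 2d noncollapsed ancient flow with $\sec\ge 0$, forcing $N^2=\IS^2$ by the classical 2d classification, so $N^3=\IS^2\times\IR$), or $\Ric_{\bar g_t}>0$ everywhere. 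In the latter case, the classification of 3d noncollapsed $\kappa$-solutions with positive Ricci (Brendle \cite{Bre20} in the noncompact case; Bamler--Daskalopoulos--\v{S}e\v{s}um \cite{BDS21} in the compact case) gives $N^3\in\{M^3_{\rm Bry},\IS^3/\Gamma,M^3_{\rm Per}\}$.

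To exclude the compact options, I would track the level sets of $f$ under the rescaling. Setting $\tilde f_i:=(f-f(x_i))/r_i$ on the rescaled metric $\tilde g_i:=r_i^{-2}g$, a direct computation gives $\nabla^2_{\tilde g_i}\tilde f_i=(1/r_i)\Ric\to 0$ in the Cheeger--Gromov limit, so $\tilde f_\infty$ is an affine function on $N^3\times\IR$ with $|\nabla\tilde f_\infty|(\bar x,0)=|\nabla f|_g(x_i)\to 1$. Under $\Ric_{\bar g_t}>0$, the only parallel directions on $N^3\times\IR$ lie along the $\IR$ factor, so $\nabla\tilde f_\infty=\partial_z$ and $\tilde f_\infty=z$. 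Consequently the level set $M_{s_i}=\{f=f(x_i)\}$ converges, in the pointed $C^\infty$ Cheeger--Gromov sense, to the slice $N^3\times\{0\}\cong N^3$.

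Now suppose $N^3$ were compact. Since $M_{s_i}$ is a connected closed 3-manifold (diffeomorphic to $\IS^3$ by Lemma \ref{lem: sec level set}), the pointed convergence upgrades to global Cheeger--Gromov convergence via a clopen argument on $M_{s_i}$: an approximate isometric embedding of $N^3$ into $M_{s_i}$ has image that is both open and closed. This gives $\diam_g(M_{s_i})=O(r_i)$ for $i$ large. On the other hand, the tangent-flow hypothesis $\IS^2\times\IR^2$ forces $\diam_g(M_{s_i})$ to grow strictly faster than $r_i$: in the tangent-flow metric, the level set $\{f=f(x_i)\}$ corresponds to a noncompact copy of $\IS^2\times\IR\subset\IS^2\times\IR^2$, and the asymptotic scalar-curvature decay from \cite{BCDMZ21}, which relates $r_i\sim R(x_i)^{-1/2}$ to the tangent-flow scale $\sigma_i\sim|ox_i|$, yields $\diam_g(M_{s_i})/r_i\to\infty$. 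This contradicts $\diam_g(M_{s_i})=O(r_i)$, so $N^3$ must be noncompact, hence $N^3\cong M^3_{\rm Bry}$.

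The main obstacle is the quantitative comparison of the two scales $r_i$ and $\sigma_i$, together with the identification of the tangent-flow level set as noncompact with the correct diameter growth. These should follow from the asymptotic estimates in \cite{BCDMZ21} combined with Bamler's $\mathbb F$-convergence theory applied to the rescaled canonical flows.
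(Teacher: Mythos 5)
Your overall strategy (dimension reduce via Theorem \ref{thm: main}, invoke the 3d classification, track level sets $\{f=f(x_i)\}$ under the rescaling via $\tilde f_i=(f-f(x_i))/r_i$, and then derive a contradiction for compact reductions) is the same as the paper's, but there are two genuine gaps in your elimination arguments.

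First, your classification step is incomplete. Hamilton's strong maximum principle applied to $\Ric_{\bar g_t}\ge 0$ does not give $N^3=N^2\times\IR$; it gives that the universal cover splits, so a priori $N^3$ could be the twisted quotient $\IS^2\times_{\mathbb{Z}_2}\IR$. This case appears explicitly in the classification of 3d noncollapsed ancient flows and must be ruled out separately. The paper handles it in Proposition \ref{prop: S^2 x R / Z_2}: by Lemma \ref{lem: conv to a shrinker} the level set $\{f=f(x_i)\}$ converges to $\IS^2\times_{\mathbb{Z}_2}\IR$, so $\mathbb{RP}^2$ embeds into $\{f=f(x_i)\}$, contradicting Lemma \ref{lem: sec level set}, which says level sets are diffeomorphic to $\IS^3$ or $\IR^3$. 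Your proposal has no counterpart to this step.

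Second, your key estimate $\diam_g(M_{s_i})/r_i\to\infty$ is asserted but not justified, and as stated it does not follow. There is no uniform pointwise decay rate $R\sim |ox|^{-\alpha}$ for general $x_i\to\infty$ on a steady soliton with $\IS^2\times\IR^2$ tangent flow (the conjectured Lai-type examples have anisotropic decay), so the relation between the curvature scale $r_i=R(x_i)^{-1/2}$ and the tangent-flow scale is not available for an arbitrary sequence. The paper's route is substantively different here. First, the compactness of the reduction $N^3$ is used (via Munteanu--\v{S}e\v{s}um's lower bound $\sup_{\partial B_r}f\ge r-C\sqrt r$ and Munteanu--Wang's connectedness at infinity) to show $f$ is proper and hence grows linearly (Proposition \ref{prop: linear growth cond}). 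Only then is Proposition \ref{prop: long necks} invoked: using linear growth and Perelman's $\ell$-function, one picks points $x_s\in M_s$ with bounded $\ell$-value, for which the parabolic rescaling converges to the asymptotic shrinker $\IS^2\times\IR^2$; combined with Lemma \ref{lem: conv to a shrinker} this shows $x_s$ is an $(\epsilon,2)$-center in $M_s$. The contradiction is then between $\limsup(\max_{M_i}R)\diam(M_i)^2<\infty$ (from compactness of $N^3$) and $M_i$ containing arbitrarily long necks. Your plan skips the linear-growth step entirely and replaces the $\ell$-function selection with an unproven scale comparison; without those ingredients the argument would not close.
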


We summarize the basic idea of the proof of Theorem \ref{thm: possible limits 4D}.
Let $(M^4,g,f)$ be a steady soliton as in Theorem \ref{thm: possible limits 4D}.
By Theorem \ref{thm: main}, it always dimension reduces. 
By the full classification of noncollapsed ancient flows, \cite{Bre20,BDS21}, it can only dimension reduce to the following:
\[
    \IS^3/\Gamma, \quad \IS^2\times \IR,\quad
\IS^2\times_{\mathbb{Z}_2} \IR,\quad
\Bry^3, \quad \Per^3.
\]
Our goal is to rule out $\IS^3/\Gamma$, 
 $\IS^2\times_{\mathbb{Z}_2} \IR$, and $\Per^3$ as possible limits. First of all, $\IS^2\times_{\mathbb{Z}_2} \IR$ cannot be a possible limit, since otherwise $\mathbb{RP}^2$ can be embedded into some level set $\{f=s\}$, which is diffeomorphic to either $\IS^3$ or $\IR^3$; this is impossible. Secondly, $\IS^3/\Gamma$ or $\Per^3$ cannot be a possible limit, since otherwise it prevents some level set $\{f=s\}$ from having arbitrarily long necks; this argument roughly follows from \cite[Section 2]{BDS21}.

For clarity, we divide the proof of Theorem \ref{thm: possible limits 4D} into several subsections.

\medskip
\noindent\textbf{Background assumptions:}
In the rest of this section, unless otherwise stated, $(M^4,g,f)$ is a noncollapsed steady soliton with 
\[
   \sec\ge 0,\quad \Ric>0. 
\]
Suppose also that its tangent flow at infinity is $$\IS^2\times \IR^2.$$

\subsection{Ruling out \texorpdfstring{$\IS^2\times_{\mathbb{Z}_2}\IR$}{Z2}}

We first rule out $\IS^2\times_{\mathbb{Z}_2}\IR$ as a possible limit. 
\begin{Proposition}
  \label{prop: S^2 x R / Z_2}
    $(M^4,g,f)$ cannot dimension reduce to $\IS^2\times_{\mathbb{Z}_2}\IR$.  
\end{Proposition}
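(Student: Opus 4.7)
The plan is to derive a topological contradiction with Lemma \ref{lem: sec level set}. Suppose, toward contradiction, that along some sequence $x_i \to \infty$, the rescaled flow $(M, r_i^{-2} g_{r_i^2 t}, (x_i, 0))$ converges in the pointed smooth Cheeger-Gromov-Hamilton sense to $((\IS^2 \times_{\mathbb{Z}_2} \IR) \times \IR, \bar g_t + \rd z^2, ((\bar x, 0), 0))$. I will produce a closed embedded $\mathbb{RP}^2$ inside some level set $\{f = s\}$, which contradicts the fact that every such far-out level set is diffeomorphic to $\IS^3$ or $\IR^3$.

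First I would show that the splitting direction in the limit is the direction of $\nabla f$. Set $f_i := r_i^{-1}(f - f(x_i))$. A direct calculation from the soliton identities $|\nabla f|^2 + R = 1$ and $\nabla^2 f = \Ric$, together with $r_i \to \infty$ and the uniform bound on $|\Ric|_{\tilde g_i}$ on compact subsets of the limit, yields $|\nabla f_i|^2_{\tilde g_i} = 1 - R \to 1$ and $|\nabla^2 f_i|_{\tilde g_i} = r_i^{-1} |\Ric|_{\tilde g_i} \to 0$ uniformly on compact subsets. Hence $f_i$ subconverges smoothly to some $f_\infty$ on the limit with $|\nabla f_\infty| \equiv 1$ and $\nabla^2 f_\infty \equiv 0$. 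Since $\IS^2 \times_{\mathbb{Z}_2} \IR$ admits no parallel vector field (the $\mathbb{Z}_2$-action reverses the candidate $\partial_t$), the unit parallel gradient $\nabla f_\infty$ must point along the $\IR$ factor, so $f_\infty(y, z) = \pm z$ up to an additive constant.

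Next I would extract an $\mathbb{RP}^2$. The waist $(\IS^2 \times \{0\})/\mathbb{Z}_2 \cong \mathbb{RP}^2$ is a closed embedded submanifold of $\IS^2 \times_{\mathbb{Z}_2} \IR$, so $\mathbb{RP}^2 \times \{0\}$ embeds in the level set $\{f_\infty = 0\}$ of the limit. Fix $R > 0$ so large that this $\mathbb{RP}^2$ lies inside $\overline{B}((\bar x, 0), R)$. The pointed smooth Cheeger-Gromov-Hamilton convergence gives, for all large $i$, embeddings $\psi_i : \overline{B}((\bar x, 0), R) \hookrightarrow M$ with $\psi_i^*(r_i^{-2} g) \to \bar g + \rd z^2$ and $f_i \circ \psi_i \to f_\infty$ smoothly. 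Since $|\nabla f_\infty| = 1 > 0$, an implicit-function argument using the flow of $\nabla f / |\nabla f|^2$ shows that, for all large $i$, the image $\psi_i(\mathbb{RP}^2)$ can be pushed onto the level set $\{f = f(x_i)\}$ of $M$, producing an embedded $\mathbb{RP}^2$ there.

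Finally, for all sufficiently large $i$, the point $x_i$ lies outside the compact set $K$ of Lemma \ref{lem: sec level set}, so $\{f = f(x_i)\}$ is a smooth $3$-manifold diffeomorphic to $\IS^3$ or $\IR^3$. But $\mathbb{RP}^2$ embeds in neither: by Alexander duality, any closed embedded surface $S \subset \IS^3$ satisfies $\tilde H_0(\IS^3 \setminus S; \IZ) \cong \tilde H^2(S; \IZ)$, and for $S = \mathbb{RP}^2$ the right side is $\IZ/2$ whereas $\tilde H_0$ of any space is free abelian. This is the required contradiction. The main technical point is the transfer step: justifying the passage of the embedded $\mathbb{RP}^2$ from the limit to a level set of $f$ in $M$; the rest is topology.
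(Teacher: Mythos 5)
Your argument follows the same route as the paper's: show that the rescaled potential $f_i$ converges to a splitting coordinate of the outer $\IR$-factor, use this to plant the waist $\mathbb{RP}^2\subset\IS^2\times_{\mathbb{Z}_2}\IR$ inside the level set $\{f=f(x_i)\}$ for large $i$, and contradict Lemma~\ref{lem: sec level set}, which forces far-out level sets to be diffeomorphic to $\IS^3$ or $\IR^3$. Your added details --- that $\IS^2\times_{\mathbb{Z}_2}\IR$ admits no parallel vector field so the splitting line must be the outer $\IR$, the Alexander-duality justification that $\mathbb{RP}^2$ does not embed in $\IS^3$ (hence not in $\IR^3\subset\IS^3$ either), and the $\nabla f/|\nabla f|^2$-flow argument for transferring the embedded $\mathbb{RP}^2$ onto the exact level set --- are all correct and fill in points the paper treats tersely.

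There is, however, one gap: you use $r_i\to\infty$ as an input, yet the key step $|\nabla^2 f_i|_{\tilde g_i}=r_i^{-1}|\Ric|_{\tilde g_i}\to 0$ depends entirely on it, and $r_i\to\infty$ is not free. A priori the rescaling factors $r_i^{-2}=R(x_i)$ could stay bounded below by some $a>0$, in which case no splitting of $\nabla f$ emerges. The paper establishes $R(x_i)\to 0$ first, in Lemma~\ref{lem: conv to a shrinker}: if $R(x_i)\ge a>0$ along a subsequence, then by \cite[Lemma 9.1]{CDM22} the limit would also carry a steady gradient soliton structure in addition to its shrinking one; adding the two potentials yields a function $\rho$ with $\nabla^2\rho=\tfrac12\bar g_0$, forcing the limit to be flat $\IR^4$, which contradicts $R_{\bar g_0}(\bar x)=1$. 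You should either supply this argument or explicitly invoke Lemma~\ref{lem: conv to a shrinker} (which already packages both $R(x_i)\to 0$ and the convergence of $f_i$ to the splitting coordinate, and is what the paper does).
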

The following lemma is needed in the proof of the above proposition.
\begin{Lemma}
\label{lem: conv to a shrinker}
    Let $(M^n,g,f)$ be a noncollapsed steady soliton. Suppose that for some $x_i\to \infty,$
    if $r_i^{-2}:=R(x_i),$
    \[
        (M,r_i^{-2}g_{r_i^2t}, x_i)
        \longrightarrow (\overline{M},\bar g_t, \bar x),
    \]
    locally smoothly over $(-\infty,0],$ where  $(\overline{M},\bar g_t)$ is a Ricci flow admitting a shrinking gradient Ricci soliton struture. Then
    \[
        R(x_i)\to 0.
    \]
    Moreover, the limit flow splits off a line:
    \[
     (\overline{M},\bar g_t)= (\IR\times N, dz^2+h_t),
    \]
    where $z$ denotes the standard coordinate on $\IR,$
    and 
    \[
        f_i := r_i^{-1}(f-f(x_i))
    \]
    converges locally smoothly to the coordinate function $z$.
    
\end{Lemma}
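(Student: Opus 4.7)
The plan is to pass to the limit in the steady-soliton identities after rescaling in a way that normalizes both the metric and the potential at $x_i$. Using that $\Ric$ and $\nabla^2$ are invariant as $(0,2)$-tensors under a constant conformal change while $R$ and $|\nabla\cdot|^2$ pick up the inverse conformal factor, setting $g_i:=r_i^{-2}g$ and $f_i:=r_i^{-1}(f-f(x_i))$ directly translates the identities $\Ric_g=\nabla^2 f$ and $R_g+|\nabla f|^2_g=1$ into
\[
  \Ric_{g_i}=r_i\,\nabla^2_{g_i}f_i,\qquad r_i^{-2}R_{g_i}+|\nabla f_i|^2_{g_i}=1.
\]
In particular $|\nabla f_i|_{g_i}\le 1$ and $f_i(x_i)=0$, so Arzel\`a--Ascoli provides a subsequence with $f_i\to f_\infty$ locally uniformly on $\bar M$ at $t=0$; the identity $\nabla^2_{g_i}f_i=r_i^{-1}\Ric_{g_i}$ combined with the smooth convergence of $g_i$ allows standard bootstrapping to upgrade this to locally smooth convergence. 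Applying the same reasoning with $f_i$ replaced by its canonical time-$t$ extension $r_i^{-1}(f\circ\Phi_{r_i^2 t}-f(x_i))$ yields a smooth space-time limit $f_\infty$ on $\bar M\times(-\infty,0]$.

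Next I rule out the possibility that $R(x_i)$ fails to tend to zero. Suppose for contradiction that on a subsequence $r_i\to r_\infty\in[1,\infty)$. Passing to the limit at $t=0$ gives
\[
  \nabla^2_{\bar g_0}f_\infty=r_\infty^{-1}\Ric_{\bar g_0},\qquad |\nabla f_\infty|^2_{\bar g_0}=1-r_\infty^{-2}R_{\bar g_0}.
\]
Because $\bar g_t$ carries a shrinking GRS structure, at $t=0$ there are $\tau>0$ and a shrinker potential $\bar\phi$ with $\Ric_{\bar g_0}+\nabla^2_{\bar g_0}\bar\phi=(2\tau)^{-1}\bar g_0$. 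Eliminating $\Ric_{\bar g_0}$ produces
\[
  \nabla^2_{\bar g_0}\bigl(r_\infty f_\infty+\bar\phi\bigr)=\tfrac{1}{2\tau}\bar g_0,
\]
so $\bar M$ carries a non-constant smooth function whose Hessian is a positive constant multiple of the metric. Tashiro's classical rigidity theorem then forces $(\bar M,\bar g_0)$ to be isometric to $(\IR^n,g_{\mathrm{flat}})$, which contradicts the identity $R_{\bar g_0}(\bar x)=\lim_i r_i^2 R_g(x_i)=1$. Hence $r_i\to\infty$.

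With $r_i\to\infty$, the same limiting procedure gives $\nabla^2_{\bar g_0}f_\infty=0$ and $|\nabla f_\infty|_{\bar g_0}\equiv 1$, so $\nabla f_\infty$ is a nowhere-vanishing parallel unit vector field on the complete, connected manifold $(\bar M,\bar g_0)$. Flowing along its integral curves yields an isometric splitting $\bar g_0=dz^2+h_0$ on $\IR\times N$, with $f_\infty=z$ after fixing $z(\bar x)=0$. Since a parallel factor on one time slice of a Ricci flow is preserved under the flow (either by uniqueness applied to the product initial data $dz^2+h_0$, or by directly propagating the parallel gradient), the whole limit is $(\IR\times N,dz^2+h_t)$, and $f_i\to z$ locally smoothly. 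The principal obstacle is the bounded-$r_i$ step: one must manufacture the function $r_\infty f_\infty+\bar\phi$ out of the two competing soliton structures and then invoke a Tashiro-type rigidity to force flatness; the remaining steps are standard Cheeger--Gromov--Hamilton bookkeeping.
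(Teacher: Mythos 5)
Your proof follows the paper's approach almost exactly: rescale the soliton identities, rule out $r_i$ bounded by producing a function with Hessian a positive constant multiple of the metric (the paper routes this through \cite[Lemma 9.1]{CDM22} to get the auxiliary steady structure on the limit, whereas you re-derive it directly from the scaled identity $\Ric_{g_i}=r_i\nabla^2_{g_i}f_i$ — same content, and your invocation of Tashiro's rigidity is what ``must be $\IR^n$'' means in the paper), then conclude $\nabla^2_{\bar g_0}f_\infty=0$ and $|\nabla f_\infty|\equiv 1$ when $r_i\to\infty$ and split the final slice.

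There is one concrete slip worth flagging. The proposed space-time extension $r_i^{-1}\bigl(f\circ\Phi_{r_i^2 t}-f(x_i)\bigr)$ does \emph{not} converge for $t<0$: since $\partial_t\bigl[r_i^{-1}\bigl(f(\Phi_{r_i^2 t}(x_i))-f(x_i)\bigr)\bigr]=-r_i\bigl(1-r_i^{-2}R_{g^i_t}(x_i)\bigr)\to-\infty$, the subtracted constant drifts. The normalization that does work is $r_i^{-1}\bigl(f\circ\Phi_{r_i^2 t}-f(\Phi_{r_i^2 t}(x_i))\bigr)$, i.e.\ one must re-center the potential at $x_i$ in each scaled time slice; with that choice $\partial_t$ of the recentered potential is $r_i^{-1}\bigl(R_{g^i_t}(\cdot)-R_{g^i_t}(x_i)\bigr)\to 0$, so the spacetime limit exists and is $t$-independent, which is precisely what makes the split direction constant in time. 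Also, ``uniqueness applied to the product initial data'' only closes the argument for $t\ge 0$; the ancient direction needs the recentered-potential propagation (or the shrinker's self-similar structure). To be fair, the paper itself only treats the $t=0$ slice explicitly and leaves the backward-in-time splitting implicit, so you are roughly at the same level of rigor — but the particular formula you wrote down for the time-$t$ extension is wrong and should be corrected.
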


Assuming this Lemma, we can rule out $\IS^2\times_{\mathbb{Z}_2}\IR$.
\begin{proof}[Proof of Proposition \ref{prop: S^2 x R / Z_2}]
   Suppose that for some $x_i\to \infty,$
    if $r_i^{-2}=R(x_i),$
    \[
        (M,r_i^{-2}g_{r_i^2t}, x_i)
        \to ((\IS^2\times_{\mathbb{Z}_2}\IR)\times \IR, (0,\bar x)),
    \]
    locally smoothly over $(-\infty,0].$ By Lemma \ref{lem: conv to a shrinker}, $f_i=(f-f(x_i))/r_i$ converges to the coordinate function $z$ of the last $\IR$-factor. Therefore, the level set $\{f=f(x_i)\}$, after scaling, converges to $\IS^2\times_{\mathbb{Z}_2}\IR$, locally smoothly. In particular, $\mathbb{RP}^2$, as the tip of $\IS^2\times_{\mathbb{Z}_2}\IR$, can be embedded into $M_i:=\{f=f(x_i)\}$, for large $i.$
   However, $\mathbb{RP}^2$ cannot be embedded into $\IS^3$ or $\IR^3.$ This is  a contradiction.

\end{proof}

Now we prove Lemma \ref{lem: conv to a shrinker}.
\begin{proof}
The proof is similar to \cite[Proposition 3.1]{BCDMZ21}.

    Suppose to the contrary that $R(x_i)\ge a$, for some constant $a>0$, by possibly passing to a subsequence. 
    By the assumption, there is a smooth function $\phi$ such that
    \[
        \Ric_{\bar g_0} +\nabla^2_{\bar g_0} \phi = \tfrac{1}{2}\bar g_0.
    \]
    By \cite[Lemma 9.1]{CDM22}, since $r_i^{-2}\ge a,$ 
    $(\overline{M},\bar g_0)$ also admits a steady gradient Ricci soliton structure with potential function $\bar f,$ i.e., 
    $$\Ric_{\bar g_0}=\nabla^2_{g_0}\bar f.$$
    Define $\rho = \phi+\bar f.$ Then
    \[
        \nabla^2_{g_0} \rho = \tfrac{1}{2}g_0,
    \]
    and thus $(\overline{M},\bar g_0)$ must be the standard $\IR^n.$ This is a contradiction to the fact that $R_{\bar g_0}(\bar x)=1.$ Thus, $r_i^{-2}=R(x_i)\to 0.$

    Finally, to prove the splitting result of the limit flow, we consider only the finial time-slices of the scaled flows $(M^n,r_i^{-2}g)$. We write $g_i:= r_i^{-2}g, f_i=(f-f(x_i))/r_i.$ Then
    \[
        f_i(x_i)=0,\quad
        |df_i|^2_{g_i}(x_i)=1-R(x_i)\to 1.
    \]
    Moreover,
    \[
        |\nabla^2_{g_i} f_i|_{g_i} = r_i^{-1} |{\Ric}_{g_i}|_{g_i}
        \to 0,
    \]
    uniformly over compact subsets, due to the smooth convergence. Similarly, we obtain the convergence of higher order derivatives of $f_i$. Therefore, the limit flow splits off a line and $f_i$ converges to the coordinate function of the $\IR$ factor. 
\end{proof}

\subsection{Existence of long necks}

\begin{Definition}[{$(\eps,m)$-center}]
   Let $(N^k,g)$ be a smooth manifold. We say that a point $z\in N^k$ is an $(\eps,m)$-center, if there is a smooth embedding 
\[
    \Phi:B((\bar x,0^{k-m}), 1/\eps)
    \to N,
\]
where $(\bar x,0^{k-m})\in \IS^m\times \IR^{k-m}$, such that 
\begin{itemize}
    \item $\Phi(\bar x,0^{k-m})=z$;
    \item For $\tilde g = R(z)g,$ we have
    \[
    \left\|\Phi^*\tilde g - \bar g\right\|_{C^{[1/\eps]}}  < \eps,
\]
where $\bar g$ denotes the standard round metric on $\IS^m\times \IR^{k-m}$ with constant scalar curvature $1.$
\end{itemize} 
\end{Definition}

\begin{Proposition}
\label{prop: long necks}
    Let $(M^4,g,f)$ be a noncollapsed steady soliton satisfying \eqref{cond: Ric and pinch}. Suppose that the tangent flow at infinity is $\IS^2\times \IR^2$, and $f$ grows linearly. Then for any $\eps>0$, $M_s:=\{f=s\}$ contains an $(\eps,2)$-center, if $s\ge \underline{s}(\eps).$ 
\end{Proposition}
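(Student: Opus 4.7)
The plan is to argue by contradiction: assume there exist $\eps_0>0$ and a sequence $s_i\to\infty$ such that $M_{s_i}$ admits no $(\eps_0,2)$-center. The strategy is to locate a point $x_i\in M_{s_i}$ at which the rescaled flow at curvature scale $r_i:=R(x_i)^{-1/2}$ converges smoothly to the shrinker $\IS^2\times\IR^2$. Once such $x_i$ is secured, Lemma \ref{lem: conv to a shrinker} implies $R(x_i)\to 0$ and that $f_i:=r_i^{-1}(f-s_i)$ converges locally smoothly to a coordinate function $z$ on one of the $\IR$-factors of $\IR^2$. Combined with $|\nabla f_i|^2=1-R(x_i)\to 1$, the implicit function theorem promotes this to smooth convergence of the level hypersurfaces $\{f_i=0\}=M_{s_i}$ (in the rescaled metric $R(x_i)g$) to $\{z=0\}=\IS^2\times\IR\subset\IS^2\times\IR^2$. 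After comparing with the Gauss formula to transfer ambient normalization to the induced metric, this says that $x_i$ is an $(\eps_0,2)$-center for all large $i$, the desired contradiction.

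To produce $x_i$, I invoke the tangent flow at infinity. By Theorem \ref{thm: tan flow oo} it is unique and equal to $\IS^2\times\IR^2$; combined with the background assumptions $\sec\ge 0$, $\Ric>0$, noncollapsedness and bounded curvature on compact regions, Bamler's $\mathbb F$-convergence of appropriate rescalings upgrades to locally smooth Cheeger-Gromov-Hamilton convergence,
\begin{equation*}
(M,\tau_i^{-1}g_{\tau_i t},(p_i,0))\longrightarrow (\IS^2\times\IR^2,\bar g_t,((\bar x,0),0)),
\end{equation*}
over $(-\infty,0]$ as $\tau_i\to\infty$. Since $f$ has linear growth, every point of $M_{s_i}$ sits at original distance of order $s_i$ from the tip, so taking $\tau_i$ of order $s_i^2$ one can arrange $p_i\in M_{s_i}$ (using the translation invariance of the $\IR^2$-factor of the limit). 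On the $\IS^2$-fibers of the limit the scalar curvature is a positive constant, hence $R(p_i)\sim\tau_i^{-1}$, and after a harmless bounded rescaling the scale $\sqrt{\tau_i}$ coincides with $r_i:=R(p_i)^{-1/2}$. Taking $x_i:=p_i$ then meets the hypotheses of Lemma \ref{lem: conv to a shrinker} and the argument of the first paragraph closes.

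The hard part is the existence of such a $p_i\in M_{s_i}$: a priori, by Theorem \ref{thm: main} and the classification of three-dimensional noncollapsed ancient flows, a point on $M_{s_i}$ might admit only dimension reductions of the form $\Bry^3\times\IR$, $(\IS^3/\Gamma)\times\IR$ or $\Per^3\times\IR$, none of which are shrinkers, so Lemma \ref{lem: conv to a shrinker} would be inapplicable. Excluding this relies essentially on the tangent flow at infinity being \emph{exactly} $\IS^2\times\IR^2$: Bamler's $H_n$-center construction for the spacetime base $(o,0)$ produces, at each large scale $\tau$, a base point $p_\tau$ whose curvature-scale rescaled flow converges smoothly to this tangent flow, and the linear growth of $f$ together with the uniform curvature of the limit on the $\IS^2$-fiber lets us adjust $p_\tau$ to lie on $M_s$ for $s=s(\tau)\to\infty$, and then extract a sequence along the prescribed $s_i$. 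Carrying out this matching of scales and level sets rigorously, in the smooth category under only $\sec\ge 0$, is the technical crux of the proof.
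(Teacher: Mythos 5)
Your high-level plan matches the paper's: find a point $x_i\in M_{s_i}$ at which the rescaled ambient flow converges smoothly to $\IS^2\times\IR^2$, then apply Lemma \ref{lem: conv to a shrinker} to conclude that $(f-f(x_i))/r_i$ converges to a coordinate function, so the level set $M_{s_i}$, suitably rescaled, converges to $\IS^2\times\IR$, giving the $(\eps,2)$-center. This final step, together with the Gauss-formula comparison between the ambient and intrinsic curvatures (the paper's Claim 3, which needs the derivative estimate $|\Ric(\bn,\bn)|\le CR^2$ from Corollary \ref{cor: curv Shi}), is essentially what the paper does.

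The genuine gap is exactly the one you flag and then leave open: constructing $x_i\in M_{s_i}$ with the required convergence. The paper does not try to push an $H_n$-center onto the level set; it invokes \cite[Lemma 2.4]{BCMZ21}, which, using the linear growth of $f$, produces a point $x_s$ directly on $M_s$ with $\ell(\Phi_{\tau_s}(x_s),\tau_s)\le C$ for some $\tau_s$ comparable to $s$, and then applies Perelman's asymptotic-shrinker theorem (Theorem \ref{thm: zero AVR}) together with the uniqueness of the asymptotic shrinker from \cite{BCDMZ21}. Your sketch of "adjusting" an $H_n$-center $p_\tau$ to land on $M_s$ using $\IR^2$-translation invariance of the limit is not a proof: translations of the limit do not correspond to a translation action on $M$, and whether a nearby point at bounded rescaled distance hits the prescribed level set $M_s$ requires an argument about where $f$ takes its values near $p_\tau$, which you do not supply. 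There is also a scaling error: for a steady soliton with linear $f$-growth, the relevant parabolic scale satisfies $\tau_s\sim s$ (the $\ell$-centers sit at distance of order $\tau$ from $o$), not $\tau_i\sim s_i^2$ as you write. So while the final argument is sound, the construction of the base points on the level sets — the part you yourself call "the technical crux" — is missing, and the paper supplies it by a different mechanism (Perelman's reduced-length based point picking) that you would need to cite or reproduce.
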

    
\begin{proof}
    By the proof of Lemma 2.4 in \cite{BCMZ21}, for any large $s>0$, there is $x_s\in M_s$, such that $\ell(\Phi_{\tau_s}(x_s),\tau_s)\le C,$ where $\ell$ is Perelman's $\ell$-function based at $(o,0)$, and $cs\le \tau_s\le s/c$ for some universal constants $c,C>0.$ Here, we used the fact that $f$ grows linearly.
    
    \medskip
    \noindent\textbf{Claim 1:} For any sequence $s_i\to \infty,$ by passing to a subsequence, 
    \[
        (M^4,s_i^{-1}g, x_{s_i})
        \to (\IS^2\times \IR^2, \bar x)
    \]
    up to scaling.
    \begin{proof}[Proof of Claim 1]
        By passing to a subsequence, we may assume that $\tau_{s_i}/s_i\to a$, for some $a\in [c,1/c].$ Claim 1 follows by the existence of Perelman's asymptotic shrinkers (Theorem \ref{thm: zero AVR}) and the fact that the asymptotic shrinker (or equivalently the tangent flow at infinity) on such steady solitons is unique (\cite{BCDMZ21}).
    \end{proof}

    \medskip
    \noindent\textbf{Claim 2:} If $s\ge \underline{s}$, then
    \[
        1/C \le s\cdot R(x_s) \le C,
    \]
    for some constant $C<\infty.$
    \begin{proof}[Proof of Claim 2.]
        Suppose to the contrary that there is a sequence $s_i\to \infty,$ such that $s_iR(x_{s_i})\to \infty$ (or $0$).
        By Claim 1, $s_i R(x_{s_i})$ converges to a positive constant, by passing to a subsequence. This is a contradiction.
    \end{proof}
    \medskip
    \noindent\textbf{Claim 3:} If $s\ge \underline{s},$
    \[
        1/C \le s\cdot \bar R(x_s) \le C,
    \]
    for some constant $C<\infty,$ where $\bar R$ denotes the scalar curvature of the submanifold $M_s.$
    \def \bn {\mathbf{n}}
    \begin{proof}[Proof of Claim 3.]
    We write $\bn:=\frac{\nabla f}{|\nabla f|}.$ Recall that the second fundamental form of $M_s$ is given by
    \[
        {\rm II}(X,Y)=
        \langle \nabla_X \bn, Y \rangle=\frac{1}{|\nabla f|}\Ric(X,Y),
    \]
    for any $X,Y$ tangent to $M_s.$
        By the Gauss formula (see, e.g., \cite{DZ20a}),
        \[
            \bar R = R - \Ric(\bn,\bn)
            + H^2 - |{\rm II}|^2,
        \]
        where $H={\rm tr}_{M_s}({\rm II})$ is the mean curvature of $M_s.$ By \cite[(4.14)]{CDM22}, for large $s,$ we have
        \[
            |{\Ric(\bn,\bn)}|
            \le CR^2
        \]
        at $x_s,$ due to the derivative estimates given by Theorem \ref{cor: curv Shi}. Thus, for large $s,$ 
        \[
            |R(x_s)-\bar R(x_s)|\le CR^2(x_s).
        \]
        Claim 3 follows by Claim 2.
    \end{proof}
\medskip
    \noindent\textbf{Claim 4:} For any $\eps>0$, $x_s$ is an $(\eps,2)$-center in $M_s$, if $s\ge \underline{s}(\eps).$
\begin{proof}[Proof of Claim 4.]
   Suppose to the contrary that there is a sequence $s_i\to \infty,$  such that $x_i:=x_{s_i}$ is not an $(\eps,2)$-center in $M_i:=M_{s_i}$, for some fixed $\eps>0.$ Combining Claim 1 and Claim 3, 
   \[
        (M^4, r_i^{-2} g, x_i)
        \to (\IS^2\times \IR^2,\bar x),
   \]
   where $r_i^{-2}=\bar R(x_i).$
   By Lemma \ref{lem: conv to a shrinker}, $(f-f(x_i))/r_i$ converges to the coordinate function of one $\IR$ factor. Thus, $M_i$, after rescaling by $r_i^{-2}$, is arbitrarily close to $\IS^2\times \IR$ locally smoothly.
   This is a contradiction.
\end{proof}
The conclusion of the proposition follows from Claim 4.
\end{proof}

\subsection{Ruling out compact reductions}
Our goal in this subsection is to rule out compact limits.
\begin{Proposition}
\label{prop: no cpt reduct}
   $(M^4,g,f)$ cannot dimension reduce to compact flows, and thus $\IS^3/\Gamma, M^{3}_{\rm Per}$ are not possible.   
\end{Proposition}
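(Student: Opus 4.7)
The plan is to argue by contradiction: suppose that along some sequence $y_i\to\infty$ the dimension reduction produces
\[
    (M,r_i^{-2}g_{r_i^2t},(y_i,0))\longrightarrow (N^3\times\IR,\bar g_t+\rd z^2,((\bar y,0),0)),
\]
with $r_i=R(y_i)^{-1/2}$ and $N^3$ compact; then by the 3d classification of noncollapsed ancient flows \cite{Bre20,BDS21}, $N^3\in\{\IS^3/\Gamma,\,M^{3}_{\rm Per}\}$. The strategy is to exploit the compactness of $N^3$ to force the level set $M_{s_i}:=\{f=f(y_i)\}$ to have intrinsic diameter bounded by $C R(y_i)^{-1/2}$, and then to clash this with the arbitrarily long necks inside $M_{s_i}$ produced by Proposition \ref{prop: long necks}.

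First I would upgrade the convergence to show $R(y_i)\to 0$ and $f_i:=(f-f(y_i))/r_i\to z$ locally smoothly, where $z$ denotes the coordinate on the $\IR$-factor. For $N^3=\IS^3/\Gamma$ this is precisely Lemma \ref{lem: conv to a shrinker}. For $N^3=M^{3}_{\rm Per}$ I would adapt that argument: if $R(y_i)$ stayed bounded away from $0$, then \cite[Lemma 9.1]{CDM22} would endow the time-$0$ slice $M^{3}_{\rm Per}\times\IR$ with a steady soliton structure $\Ric=\nabla^2\phi$. The block form of $\Ric$ on the product together with $\partial_z^2\phi=0$ forces $\phi=\phi_1(p)+az$ with $\nabla^2\phi_1=\Ric_{M^{3}_{\rm Per}}$; taking the trace and integrating $\Delta\phi_1=R_{\rm Per}>0$ over the compact $M^{3}_{\rm Per}$ yields a contradiction. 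Once $R(y_i)\to 0$, the computations $|\nabla f_i|_{g_i}^2=1-R\to 1$ and $|\nabla^2 f_i|_{g_i}=O(\sqrt{R(y_i)})\to 0$ on compact subsets force $f_i\to z$.

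As a consequence, in the rescaled metric $r_i^{-2}g$ the level set $\{f_i=0\}$ approximates the compact cross-section $N^3\times\{0\}$ near $y_i$. For $i$ large, a ball around $y_i$ of rescaled radius exceeding $\diam(N^3,\bar g_0)$ contains a closed embedded $3$-submanifold diffeomorphic to $N^3$; but $M_{s_i}$ is a connected smooth $3$-manifold (diffeomorphic to $\IS^3$ by Lemma \ref{lem: sec level set}), so a closed-and-open argument forces $M_{s_i}$ itself to be diffeomorphic to $N^3$. This rules out $\Gamma\ne 1$ on topological grounds, and in the remaining cases yields $\diam_{\rm int}(M_{s_i})\le C\,R(y_i)^{-1/2}$.

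To close, apply Proposition \ref{prop: long necks} with $s_i=f(y_i)\to\infty$: for small $\eps>0$ there is an $(\eps,2)$-center $x_{s_i}\in M_{s_i}$ with $\bar R(x_{s_i})\sim 1/s_i$, producing an intrinsic neck of length $\gtrsim\eps^{-1}\sqrt{s_i}$. Combining with the diameter bound gives $R(y_i)\cdot s_i\le C\eps^2$. On the other hand, since $d_g(y_i,x_{s_i})\le\diam_{\rm int}(M_{s_i})\lesssim r_i$, the points $y_i$ and $x_{s_i}$ lie at bounded distance in the rescaled metric, so smooth convergence to a limit with uniformly positive and bounded scalar curvature on compact subsets forces $R(x_{s_i})\sim R(y_i)$; together with Claim 2 of the proof of Proposition \ref{prop: long necks} ($R(x_{s_i})\sim 1/s_i$), this yields $R(y_i)\cdot s_i\ge c_0>0$. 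Choosing $\eps$ with $C\eps^2<c_0$ completes the contradiction. The main obstacle is Step 1 in the $M^{3}_{\rm Per}$ case, where Lemma \ref{lem: conv to a shrinker} does not apply verbatim; once that is handled, the remaining diameter/scalar-curvature comparison is routine.
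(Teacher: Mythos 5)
Your overall strategy matches the paper's: upgrade the convergence to force the level sets $M_{s_i}=\{f=f(y_i)\}$ to be compact and to approximate the compact cross-section $N^3$ at scale $r_i$, and then contradict Proposition \ref{prop: long necks} by comparing scalar-curvature-normalized diameters. The variant argument you propose for $N^3=M^3_{\rm Per}$ is also fine and amounts to the same computation the paper packages in Lemma \ref{lem: reduce to cpt}: once $r_i^{-2}\ge a$, \cite[Lemma 9.1]{CDM22} endows the limit with a steady soliton structure, and the product structure forces the $N^3$-factor to carry a steady potential $\phi_0$ with $\Delta\phi_0=R_N>0$, impossible on a compact manifold. (The paper simply says a compact steady soliton is Ricci-flat and hence flat in dimension $3$.)

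The genuine gap is between your Step 1 and your final step: you invoke Proposition \ref{prop: long necks} with $s_i=f(y_i)\to\infty$, but that proposition requires $f$ to grow linearly, and you have not established this (nor that $f(y_i)\to\infty$). Both of these are not automatic. The paper inserts an intermediate lemma to supply exactly this: once you know some level set $\{f=f(y_i)\}$ is compact and the limit potential $f_i\to z$ has points with $f_i\to 1$, it uses Munteanu--\v{S}e\v{s}um's growth estimate $\sup_{\partial B_r(o)}f\ge r-C\sqrt r$ to show $\{f\ge f(y_i)\}$ is noncompact, Munteanu--Wang's one-endedness to conclude $\{f\le f(y_i)\}$ is compact, hence $f$ is proper, hence (via Proposition \ref{prop: linear growth cond}) $f$ grows linearly. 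Without that bridge your application of Proposition \ref{prop: long necks} (and in particular Claim 2, which you lean on for $R(x_{s_i})\sim 1/s_i$) is unjustified. The remaining diameter/neck comparison is then essentially the paper's computation, written in the equivalent form $\limsup_i(\max_{M_i}R)\diam(M_i)^2<\infty$ versus the presence of arbitrarily long necks; your additional topological observation that $\Gamma\neq 1$ is already excluded by Lemma \ref{lem: sec level set} is correct but not needed, since the diameter contradiction handles all compact $N^3$ uniformly.
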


Using a similar argument as in Lemma \ref{lem: conv to a shrinker}, we can also prove the following Lemma.
\begin{Lemma}
\label{lem: reduce to cpt}
    Let $(M^4,g,f)$ be a noncollapsed steady soliton. Suppose that for some $x_i\to \infty,$
    if $r_i^{-2}=R(x_i),$
    \[
        (M,r_i^{-2}g_{r_i^2t}, x_i)
        \to (\IR\times N^3,dz^2+N^3, (0,\bar x)),
    \]
    locally smoothly over $(-\infty,0],$ where $N$ is compact. Then
    \[
        R(x_i)\to 0,
    \]
    and
    \[
        f_i := r_i^{-1}(f-f(x_i))
    \]
    converges locally smoothly to the coordinate function $z$.
\end{Lemma}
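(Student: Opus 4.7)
The plan is to mirror the proof of Lemma~\ref{lem: conv to a shrinker}, replacing the shrinker structure used there by the compactness of $N^3$ in the contradiction argument, and then repeating the scaling computation verbatim for the splitting of $f_i$.

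I would first show $R(x_i)\to 0$ by contradiction. Suppose that, after passing to a subsequence, $r_i^{-2}=R(x_i)\ge a>0$; since $r_i^{-2}\le 1$ by the normalization $R+|\nabla f|^2=1$, the rescaling factors lie in $[a,1]$. By \cite[Lemma~9.1]{CDM22}, such a bounded rescaling of a steady soliton produces a smooth Cheeger--Gromov--Hamilton limit that again carries a steady gradient Ricci soliton structure at $t=0$, so there is a smooth potential $\bar f$ on $\IR\times N^3$ with
\[
    \Ric_{\bar g_0}=\nabla^2_{\bar g_0}\bar f.
\]
Because $\bar g_0=dz^2+h_0$ is a Riemannian product, the vanishing $\Ric_{\bar g_0}(\partial_z,\cdot)=0$ forces $\partial_z^2\bar f=0$ and $\partial_z\bar f$ to be constant along $N$, so $\bar f(z,y)=Az+B(y)$; restricting the soliton equation to $N$ then yields $\Ric_N=\nabla^2_N B$, making $(N^3,h_0,B)$ a compact steady soliton. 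Since $\sec\ge 0$ is preserved under the isometric canonical flow $g_t=\Phi_t^*g$, under constant rescalings, and under smooth Cheeger--Gromov limits, and since $N\subset\IR\times N$ is totally geodesic, we have $\sec_N\ge 0$, hence $R_N\ge 0$. Tracing the restricted soliton equation and integrating over the compact $N$ gives $0=\int_N\Delta_N B=\int_N R_N$, so $R_N\equiv 0$; combined with $\Ric_N\ge 0$ this forces $\Ric_N\equiv 0$, and hence $\Ric_{\bar g_0}\equiv 0$ on $\IR\times N$. This contradicts $R_{\bar g_0}(\bar x)=\lim r_i^2 R(x_i)=1$.

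Once $R(x_i)\to 0$ is in hand, the splitting of $f_i$ is exactly the computation in Lemma~\ref{lem: conv to a shrinker}: the normalization gives $f_i(x_i)=0$ and $|df_i|^2_{g_i}(x_i)=1-R(x_i)\to 1$, while
\[
    |\nabla^2_{g_i}f_i|_{g_i}=r_i^{-1}|\Ric|_{g_i}\longrightarrow 0
\]
uniformly on compact sets by the smooth convergence $g_i\to\bar g$, and similarly for higher covariant derivatives. Any smooth subsequential limit $\bar f$ on $\IR\times N$ then satisfies $\bar f(\bar x)=0$, $|d\bar f|_{\bar g}(\bar x)=1$, and $\nabla^2_{\bar g}\bar f\equiv 0$; restricted to any slice $\{z\}\times N$ this is a harmonic function on the compact $N$, hence constant, so $\bar f$ depends only on $z$. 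Combined with $\bar f''=0$, the base-point normalization, and a fixed sign convention, this pins down $\bar f=z$, and uniqueness of the limit upgrades the subsequential convergence to convergence of the full sequence.

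I expect the main obstacle to be a clean invocation of \cite[Lemma~9.1]{CDM22} when $r_i^{-2}$ has only a bounded limit rather than a definite one---one must pass to a further subsequence with $r_i^{-2}\to b\in[a,1]$ so that the rescaled steady soliton equation survives the smooth passage---and verifying that $\sec\ge 0$ really does persist through the entire limiting procedure so that the integration argument on the compact factor $N$ closes up. The remaining ingredients are either a direct translation from Lemma~\ref{lem: conv to a shrinker} or are classical.
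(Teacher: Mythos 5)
Your proposal is correct and follows the paper's argument almost verbatim: contradiction via \cite[Lemma~9.1]{CDM22} to install a steady soliton structure on the split limit, deduce that the compact factor is Ricci-flat, and contradict $R_{\bar g_0}(\bar x)=1$; then the splitting of $f_i$ is the same second-order/derivative estimate as in Lemma~\ref{lem: conv to a shrinker}. Two small remarks. First, you fill in a step the paper leaves implicit, namely that a steady soliton structure on a Riemannian product $\IR\times N$ restricts to one on the compact factor $N$; that is a worthwhile addition. Second, your integration argument using $\sec\ge 0$ to get $R_N\ge 0$ is an unnecessary detour: it is classical that $R\ge 0$ holds on \emph{any} complete steady soliton, and the standard fact (which the paper simply cites) is that every compact steady soliton is Ricci-flat; the paper then uses three-dimensionality to upgrade to flat, whereas you already obtain the contradiction from $\Ric_{\bar g_0}\equiv 0$ alone, which is in fact cleaner. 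Either route closes the argument.
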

\begin{proof}
    Suppose to the contrary that $r_i^{-2}\ge a$, for some constant $a>0$, by possibly passing to a subsequence.
    By \cite[Lemma 9.1]{CDM22}, since $r_i^{-2}\ge a,$ 
    $(N^3,h_t)$ also admits a steady gradient Ricci soliton structure. But any steady soliton structure on a compact manifold is Ricci-flat, and thus $(N^3,h_t)$ is flat since $N$ is three-dimensional. This is a contradiction to the fact that $R_{h_0}(\bar x)=1.$ The rest of the proof is the same as in Lemma \ref{lem: conv to a shrinker}.
\end{proof}

\begin{Lemma}
    Suppose that $(M^4,g,f)$ dimension reduces to a compact flow $(N^3,h_t)_{t\le 0}$. Then $f$ grows linearly.
\end{Lemma}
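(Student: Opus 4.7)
Since $\Ric>0$ forces $|\CC|\le 1$ via Lemma~\ref{lem: sec level set}, the set $\CC$ is automatically compact once nonempty; combined with Proposition~\ref{prop: linear growth cond}, it is enough to exhibit a single critical point of $f$. My plan is to trap one inside a compact region cut out by a level set of $f$.

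First, I would feed the compact-reduction hypothesis into Lemma~\ref{lem: reduce to cpt} to obtain $R(x_i)\to 0$ and $f_i:=r_i^{-1}(f-f(x_i))\to z$ locally smoothly on the limit $\IR\times N$. Since $N$ is compact, for any $L>\diam(N)+1$ the slice $\{z=0\}\times N$ lies strictly inside the limit ball $B((\bar x,0),L)$ and stays uniformly away from its boundary sphere; Cheeger-Gromov-Hamilton convergence then forces, for $i$ large, the connected component $C_i\subset\{f=f(x_i)\}$ containing $x_i$ to sit inside $B_{g_i}(x_i,L)$ and be diffeomorphic to $N$. In particular $C_i$ is a compact oriented embedded $3$-manifold in $M$.

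Next, I would invoke the standard fact that a complete noncompact $4$-manifold with $\sec\ge 0$ and $\Ric>0$ is diffeomorphic to $\IR^4$ (Cheeger-Gromoll soul theory, since $\Ric>0$ rules out positive-dimensional souls), so that Jordan-Brouwer separation yields a bounded open component $\Omega_i\subset M$ with $\partial\Omega_i=C_i$ and $\bar\Omega_i$ compact. Now $|\nabla f|^2(x_i)=1-R(x_i)\to 1$, so $\nabla f(x_i)\ne 0$ for $i$ large, which means $f$ strictly crosses the value $f(x_i)$ as one enters $\Omega_i$ from $C_i$; thus the open interior of $\Omega_i$ contains points where $f\ne f(x_i)$. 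Since $\bar\Omega_i$ is compact, $f$ attains its maximum and minimum there, and because $f\equiv f(x_i)$ on the boundary $C_i$, at least one of those extreme values must be attained in the open interior of $\Omega_i$, where it is automatically a critical point of $f$. Hence $\CC\ne\emptyset$ and Proposition~\ref{prop: linear growth cond} delivers the linear growth.

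The main obstacle is the topological identification $M\cong\IR^4$ needed to bound the compact region $\Omega_i$. The clean argument runs through the soul theorem together with the fact that $\Ric>0$ forbids nontrivial souls (the latter via Cheeger-Gromoll splitting applied to lines in the normal directions of a positive-dimensional soul); if one wished to avoid this global input, an intrinsic alternative is to exploit the strict geodesic convexity of the sublevel component of $\{f\le f(x_i)\}$ bordering $C_i$ (from $\nabla^2 f=\Ric>0$) and appeal to Cheeger-Gromoll's structure theory for closed totally convex subsets of $\sec\ge 0$ manifolds, but this route is noticeably heavier. All remaining steps are direct consequences of Lemma~\ref{lem: reduce to cpt} and elementary extreme-value considerations.
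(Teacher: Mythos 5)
Your reduction to showing $\CC\neq\emptyset$ is sound: with $\Ric>0$, $f$ is strictly convex so $|\CC|\le 1$, and Proposition~\ref{prop: linear growth cond} then delivers linear growth from any critical point. The extraction of a compact connected component $C_i\subset\{f=f(x_i)\}$ from Lemma~\ref{lem: reduce to cpt} and the extreme-value argument inside a bounded region bounded by $C_i$ are also fine. The gap is the step you flag yourself: ``$\sec\ge 0$ and $\Ric>0$ imply $M\cong\IR^4$.'' The justification you offer --- that $\Ric>0$ forbids a positive-dimensional soul ``via Cheeger--Gromoll splitting applied to lines in the normal directions of a positive-dimensional soul'' --- does not hold up: a positive-dimensional soul does not produce lines in $M$, so the splitting theorem has no purchase here. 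The genuine result in this direction is Perelman's solution of the soul conjecture, which needs \emph{all} sectional curvatures positive at a single point, not merely $\Ric>0$; with only $\Ric>0$ and $\sec\ge 0$ on a $4$-manifold, Perelman's flat-strip machinery rules out $1$- and $3$-dimensional souls (because the vertizontal curvatures vanish, forcing some $\Ric(\cdot,\cdot)=0$), but it does not immediately kill a $2$-dimensional soul, so $M\cong\IR^4$ remains unproved by these means. Your ``heavier'' alternative via totally convex sublevel sets also does not close the gap: a noncompact closed totally convex set contains a ray, and along that ray strict convexity of $f$ only forces $\Ric(\dot\gamma,\dot\gamma)\to 0$, which is perfectly compatible with $\Ric>0$ in the absence of a uniform lower Ricci bound.

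The paper sidesteps the topological identification entirely. After obtaining, as you do, a compact level set $M_i=\{f=f(x_i)\}$, it separates $M$ into $\{f<f(x_i)\}$ and $\{f>f(x_i)\}$, invokes the Munteanu--\v{S}e\v{s}um estimate $\sup_{\partial B_r(o)}f\ge r-C\sqrt{r}$ to see that $\{f\ge f(x_i)\}$ is unbounded, and then uses Munteanu--Wang's theorem that $M$ is connected at infinity to force $\{f\le f(x_i)\}$ to be compact. Running the same argument at a slightly higher level value $f(y_i)$ (produced from Lemma~\ref{lem: reduce to cpt}) shows $f$ is proper, and Proposition~\ref{prop: linear growth cond} finishes. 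You would do well to replace the ``$M\cong\IR^4$'' step by this connectedness-at-infinity route; it requires no soul-theory input and applies without the unproved topological claim.
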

\begin{proof}
    Let $x_i\to \infty$ such that, for $r_i^{-2}=R(x_i),$
    \[
        (M,r_i^{-2}g_{r_i^2t}, x_i)
        \to (\IR\times N^3,dz^2+h_t, (0,\bar x)),
    \]
    locally smoothly over $(-\infty,0],$ where $N^3$ is compact. 
    By Lemma \ref{lem: reduce to cpt}, $f_i=(f-f(x_i))/r_i$ converges to $z.$ There exist $y_i\in M$ such that
    \[
        (M,r_i^{-2}g_{r_i^2t}, y_i)
        \to (\IR\times N^3,dz^2+h_t, (1,\bar x)).
    \]
    In particular, $f_i(y_i)\to 1$, and thus $f(y_i) \ge f(x_i) + r_i/2,$ for large $i.$
    
    Moreover, the level sets $\{f=f(x_i)\}$, after scaling, converge to $N.$ In particular, $M_i=\{f=f(x_i)\}$ is compact, for large $i.$ 
    $M_i$ separates $M$ into two disioint parts: $\{f<f(x_i)\}$ and $\{f>f(x_i)\}.$ 
    Munteanu and \v{S}e\v{s}um showed in \cite[Corollary 5.2]{MS13} that for $r\ge 1$
    \[
        \sup_{\partial B_r(o)} f \ge 
        r - C \sqrt{r}.
    \]
    Thus, $\{f\ge f(x_i)\}$ is noncompact. 
    Munteanu and Wang proved that $M$ is connected at infinity, see \cite[Corollary 1.1]{MW11}, and thus $\{f\le f(x_i)\}$ must be compact, for large $i.$ Therefore, $f$ is bounded from below, and by Lemma \ref{lem: reduce to cpt}, $r_i\to \infty$ and $f(y_i)\to \infty.$ 
    By the same arguements as above, $\{f\le f(y_i)\}$ must be compact, for large $i.$ 
    It follows that $f$ is proper and by Proposition \ref{prop: linear growth cond}, $f$ grows linearly.
\end{proof}

We can now prove Proposition \ref{prop: no cpt reduct}. The argument is similar to \cite[Proposition 2.2]{BDS21}.
\begin{proof}[Proof of Proposition \ref{prop: no cpt reduct}]
    Let $x_i\to \infty$ such that, for $r_i^{-2}=R(x_i),$
    \[
        (M,r_i^{-2}g_{r_i^2t}, x_i)
        \longrightarrow (\IR\times N^3,dz^2+h_t, (0,\bar x)),
    \]
    locally smoothly over $(-\infty,0],$ where $N^3$ is compact. By the Lemma above, $f$ grows linearly, and $M_i:=\{f=f(x_i)\},$ after rescaling by $r_i^{-2}$, converges to $N$.
    It follows that
    \[
        \limsup_{i\to \infty}
        (\max_{M_i} R) {\rm diam}(M_i)^2 
        \le 2\, {\rm diam}(N,h_0)^2 < \infty.
    \]
    However, by Proposition \ref{prop: long necks}, $M_i$ contains arbitrarily long necks (after rescaling by the scalar curvature), if $i$ is sufficiently large. This is a contradiction.
\end{proof}

\begin{proof}[Proof of Theorem \ref{thm: possible limits 4D}]
    Theorem \ref{thm: possible limits 4D} follows by combining Propsition \ref{prop: S^2 x R / Z_2} and Propsition \ref{prop: no cpt reduct}.
\end{proof}



\section{Splitting of Tangent Flows at Infinity}
\begin{Theorem}
\label{thm: split tan flow}
    Let $(M^n,g,f)$ be a complete noncollapsed non-Ricci-flat steady soliton with bounded curvature.
    Then any tangent flow at infinity splits off a line.
\end{Theorem}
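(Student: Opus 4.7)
The plan is to adapt the approach of \cite{BCDMZ21} (originally carried out in dimension four) and use the soliton potential $f$, suitably rescaled and centered, as a splitting function on the tangent flow at infinity. Fix a spacetime basepoint $(o,0)$, let $\tau_i\to\infty$ be any sequence realizing a tangent flow at infinity: after passing to a subsequence, the parabolically rescaled canonical Ricci flows $X_i:=(M,g^{(i)}_t,(o,0))$ with $g^{(i)}_t:=\tau_i^{-1}g_{\tau_i t}$ $\mathbb{F}$-converge to a metric shrinking soliton $X_\infty$. Let $\nu^{(i)}_\tau$ denote the conjugate heat measure on $X_i$ based at $(o,0)$, and set
\[
\tilde f_i(x,\tau):=\tau_i^{-1/2}\bigl(f_{\tau_i\tau}(x)-f_{\tau_i\tau}(o)\bigr),\qquad f_t:=f\circ\Phi_t.
\]
A direct rescaling computation using the steady soliton identities $R+|\nabla f|^2=1$ and $\Ric=\nabla^2 f$ gives, on the rescaled flow,
\[
|\nabla\tilde f_i|^2_{g^{(i)}}+\tau_i^{-1}R_{g^{(i)}}=1,\qquad
\nabla^2_{g^{(i)}}\tilde f_i=\tau_i^{-1/2}\,\Ric_{g^{(i)}},
\]
which is the starting point of the analysis.

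The analytic core is to show that, on any compact $\tau$-interval $[\tau_1,\tau_2]\subset(0,\infty)$,
\[
\int_{\tau_1}^{\tau_2}\!\!\int\tau_i^{-1}R_{g^{(i)}}\,d\nu^{(i)}_\tau\,d\tau\longrightarrow 0,\qquad
\int_{\tau_1}^{\tau_2}\!\!\int\bigl|\nabla^2\tilde f_i\bigr|^2_{g^{(i)}}\,d\nu^{(i)}_\tau\,d\tau\longrightarrow 0.
\]
The first follows once one notes that $\int R\,d\nu^{(i)}_\tau$ remains uniformly bounded in $i$, a consequence of $\mathbb{F}$-convergence to a metric shrinker (on which the scalar-curvature average is finite by the noncollapsed assumption, i.e.\ $\NN_{o,0}\ge\mu_\infty$); the prefactor $\tau_i^{-1}\to 0$ then closes the argument. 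The second equals $\tau_i^{-1}\int\!\!\int|\Ric|^2_{g^{(i)}}d\nu^{(i)}d\tau$, and the ambient $L^2$-integral of $\Ric$ is uniformly bounded in $i$ because it passes, under $\mathbb{F}$-convergence on the regular part, to a finite integral on $X_\infty$ controlled by the shrinker equation $\Ric+\nabla^2 f_\infty=\frac{g}{2\tau}$. Combined with the pointwise bound $|\nabla\tilde f_i|\le 1$ and the normalization $\tilde f_i(o,\tau)=0$, an application of Bamler's splitting criterion for $\mathbb{F}$-convergent sequences to $\tilde f_i$ then produces a limit function $\tilde f_\infty$ on $X_\infty$ with parallel, unit-length gradient, which is equivalent to the isometric splitting $X_\infty\cong\mathbb{R}\times X_\infty'$.

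The main obstacle will be the rigorous passage to the limit in the potentially singular metric-flow setting: $X_\infty$ may carry a codimension-$4$ singular set, so one has to work on its regular part and extend via Bamler's smooth-convergence theory off the singular stratum. A secondary technical point is the $\partial_\tau$-contribution of the $o$-centering, namely $\partial_\tau\bigl(\tau_i^{-1/2}f_{\tau_i\tau}(o)\bigr)=\tau_i^{1/2}\bigl(R_{g_{\tau_i\tau}}(o)-1\bigr)$, which is not uniformly bounded in $i$; this, however, is a purely $\tau$-dependent spacetime-constant shift and does not affect the spatial Hessian decay or the splitting conclusion drawn from it.
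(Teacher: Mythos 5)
Your overall strategy---use the soliton potential $f$, rescaled and centered, as a candidate splitting function on the tangent flow---is exactly the strategy of \cite{BCDMZ21} and of the paper. However, the execution differs in two substantive ways, and one of them leaves a genuine gap.

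\paragraph{Choice of normalization and centering.} You normalize by $\tau_i^{-1/2}$ and center at the fixed basepoint $o$. The paper instead picks, via \cite[Theorem 2.3]{Bam20c}, a point $z_i$ of \emph{smooth} convergence to the regular part $\mathcal{R}_\infty$, and then normalizes by $\beta_i$ and centers at $z_i$. Since $\beta_i^{-2}=\tau_i\bigl(1-R_{g_{-\tau_i}}(z_i)\bigr)$ and $\tau_i R_{g_{-\tau_i}}(z_i)$ stays bounded, $\beta_i\sim\tau_i^{-1/2}$, so the normalizations are asymptotically equivalent; and because $|\nabla f|\le 1$ and $z_i$ stays at distance $O(\sqrt{\tau_i})$ from $o$ in $g_{-\tau_i}$, your $o$-centering gives a function that remains bounded near $z_i$. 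So this difference is cosmetic; either is fine.

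\paragraph{Global $L^2$ argument vs.\ local smooth convergence.} Here the routes really diverge, and your version has a gap. The paper exploits the choice of $z_i$ to work entirely \emph{locally on the regular part}: by smooth convergence near $z_i$, $\Ric_{g^i_{-1}}$ is locally uniformly bounded in $C^k$, so $\nabla^2_{g^i_{-1}}\tilde f_i=\beta_i\Ric_{g^i_{-1}}\to 0$ in $C^\infty_{\rm loc}$; thus $\tilde f_i\to\tilde f_\infty$ smoothly with $|d\tilde f_\infty|(z_\infty)=1$, $\nabla^2\tilde f_\infty\equiv 0$ on $\mathcal R_{\infty,-1}$; then \cite[Theorem 15.28(c)]{Bam20c} upgrades this pointwise splitting on the regular part to a splitting of the metric flow $\XX$. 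You instead aim for the two $L^2$ estimates
\[
\int_{\tau_1}^{\tau_2}\!\!\int\tau_i^{-1}R_{g^{(i)}}\,d\nu^{(i)}_\tau\,d\tau\to 0,
\qquad
\int_{\tau_1}^{\tau_2}\!\!\int\bigl|\nabla^2\tilde f_i\bigr|^2_{g^{(i)}}\,d\nu^{(i)}_\tau\,d\tau\to 0.
\]
The first is fine (the $\nu$-average of $R$ is scale-invariant and controlled by the Nash entropy). But the second requires that $\int_{\tau_1}^{\tau_2}\!\int|\Ric|^2_{g^{(i)}}\,d\nu^{(i)}\,d\tau$ be uniformly bounded in $i$, and the justification you give---that it ``passes, under $\mathbb{F}$-convergence on the regular part, to a finite integral on $X_\infty$ controlled by the shrinker equation''---is circular and not implied by $\mathbb{F}$-convergence. $\mathbb{F}$-convergence is a weak notion that does not by itself transfer $L^2$-curvature bounds from the limit back to the sequence; the almost-selfsimilarity estimates of \cite{Bam20a} control $\int\tau|\Ric+\nabla^2 b-\frac{g}{2\tau}|^2\,d\nu$, not $\int|\Ric|^2\,d\nu$ directly, and extracting a bound on the latter requires a separate argument. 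Relatedly, ``Bamler's splitting criterion for $\mathbb{F}$-convergent sequences'' is not a precise citation; the actual tool is \cite[Theorem 15.28(c)]{Bam20c}, which \emph{starts} from a splitting function on $\mathcal R_\infty$ obtained by local smooth convergence. To apply it you would in any case need pointwise control on a neighborhood of $z_\infty$, at which point the paper's local argument is both necessary and sufficient, and the detour through integral estimates is neither. I would recommend replacing the $L^2$ bookkeeping with the paper's observation: once $z_i$ is a point of smooth convergence, $\beta_i\to 0$ together with local $C^k$-bounds on $\Ric_{g^i}$ immediately kills $\nabla^2\tilde f_i$ locally, with no need for integral estimates on $\Ric$.
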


\begin{proof}
The argument is basically identical to that of \cite{BCDMZ21}. Let $\left(\mathcal{X},(\nu^{\infty}_{t})_{t<0}\right)$ be a tangent flow at infinity of the canonical flow $(M^n,g_t)_{t\in(-\infty,+\infty)}$ of the steady soliton.
Fix $o\in M$, and let $\tau_i\to \infty$ such that 
\[
        \left((M^n,g^i_t)_{t\le 0},
        (\nu_{o,-\tau_it})\right)
        \xrightarrow{\quad\mathbb{F}\quad}
        \left(\mathcal{X},
        (\nu^{\infty}_{t})_{t<0}\right),
\]
where $g^i_t:=\tau_i^{-1}g_{\tau_it}$. By \cite[Theorem 2.3]{Bam20c}, there must be $z_\infty\in \mathcal{R}_\infty$ (the regular part of $\XX$), so that $z_\infty$ is a point of smooth convergence, i.e., there exist $z_i\in M$ such that
\[
    \big(M, g^i_t, (z_i,-1)\big) \longrightarrow (\mathcal{R}_\infty, g^\infty_t, z_\infty),
\]
locally smoothly. So,
    \[
        \tau_i R_{g_{-\tau_i}}(z_i) = R_{g^i_{-1}}(z_i) \to R_{g^\infty_{-1}}(z_\infty) < \infty.
    \]
    Since $|\nabla_{g_t} f_t|^2+R_{g_t}=1$, where $f_t=f\circ\Phi_t$, on $M$ by \eqref{steady-soliton-canonical-flow-identity}, we have
    \[
        \beta_i^{-2}:=
        |\mathrm{d}f|_{g^i_{-1}}^2(z_i)
        =\tau_i |\nabla_{g_{-\tau_i}} f_{-\tau_i}|^2_{g_{-\tau_i}}(z_i)
        = \tau_i\left(1 - R_{g_{-\tau_i}}(z_i)\right)
        \to \infty.
    \]
    Let us define $$\tilde{f}_i=\beta_i\left(f_{-\tau_i}(\cdot)-f_{-\tau_i}(z_i)\right).$$ 
    Then $\tilde{f}_i(z_i)=0$,  $|d\tilde{f}_i|_{g^i_{-1}}(z_i)=1$,   $\nabla_{g^i_{-1}}^2\tilde{f}_i=\beta_i \Ric_{{g^i_{-1}}}\to 0$. Moreover, applying the standard  derivative estimates to \eqref{steady-soliton-canonical-flow-identity}, we have that the higher derivatives $\nabla^{k+2}_{g^i_{-1}}\tilde{f}_i=\beta_i\nabla^{k}_{g^i_{-1}} \Ric_{g^i_{-1}}$ are uniformly bounded on compact sets of $\mathcal{R}_{\infty}$. Hence $\tilde{f}_i$ subconverges smoothly and locally to a function $\tilde{f}_\infty$  on $\mathcal{R}_{\infty,-1}$ (that is, the $t=-1$ time-slice of $\mathcal{R}_\infty$) satisfying $|d\tilde{f}_\infty|_{g^\infty_{-1}}(z_\infty)=1$ and $\nabla^2_{g^\infty_{-1}}\tilde{f}_\infty\equiv0$. This implies that $\mathcal{R}_{\infty,-1}$ splits off a line. Furthermore,  by \cite[Theorem 15.28(c)]{Bam20c}, $\XX_{-1}$ splits off a line. Finally, since $\XX$ is a metric soliton, $\XX$ must also split off a line.
\end{proof}


With the general splitting result, we prove the following, generalizing the previous results in \cite[Theorem 1.3]{BCDMZ21} to any dimensions.
\begin{Theorem}
\label{thm: cyl flow}
    Let $(M^n,g,f)$  be a complete noncollapsed steady soliton with bounded curvature.
    If one tangent flow at infinity is $(\IS^{n-1}/\Gamma)\times \IR$, then it is the unique one, and for any $\eps>0$, there is a compact set $K_\eps$ such that every point outside $K_\eps$ is a center of an $\eps$-neck. Moreover, outside $K_\eps,$
    \[
        \Rm>0,\qquad
        r R = \tfrac{n-1}{2} + o(1),
    \]
    as $x\to \infty,$
    where $r(x)=|ox|$ for some  point $o\in M.$
    If additionally $\sec\ge 0,\Ric>0,$ then $\Gamma=1$ and $(M^n,g,f)$ is isometric to the Bryant soliton, up to rescaling.
\end{Theorem}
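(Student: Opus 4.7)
The plan is to follow the 4-dimensional template of \cite[Theorem 1.3]{BCDMZ21}, the crucial new input being Theorem \ref{thm: split tan flow} which guarantees that every tangent flow at infinity splits off a line in any dimension.

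First I would establish uniqueness of the tangent flow. By Theorem \ref{thm: split tan flow}, every tangent flow at infinity takes the form $\NN\times\IR$ where $\NN$ is an $(n-1)$-dimensional noncollapsed metric shrinker. Since the pointed Nash entropy is monotone along the canonical flow and bounded below by noncollapsedness, all tangent flows share a common Gaussian entropy $\mu_\infty$, which by hypothesis equals the entropy of $(\IS^{n-1}/\Gamma)\times\IR$. Entropy rigidity for the spherical shrinker---available here since the cylindrical model is smooth so that Bamler's $\mathbb{F}$-convergence upgrades to smooth convergence on the regular part of the limit---then forces $\NN\cong\IS^{n-1}/\Gamma$ for every tangent flow. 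An equivalent continuity-in-scale route also works: the set of scales for which the rescaled flow based at $(o,-\tau)$ is $\mathbb{F}$-close to the cylinder is nonempty by hypothesis, closed by Bamler compactness, and open by rigidity of the cylindrical shrinker, so it contains all sufficiently large scales.

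Next, uniqueness combined with smoothness of the cylindrical model yields smooth convergence of pointed rescaled flows at any sufficiently far point, hence each such point is a center of an $\eps$-neck, producing $K_\eps$. For the asymptotic $rR\to(n-1)/2$, note that $R\to 0$ at infinity, so Hamilton's identity $R+|\nabla f|^2=1$ gives $|\nabla f|\to 1$ and thus $f(x)\sim r(x)$; on the cylindrical shrinker at scale $\tau$ the cross-sectional scalar curvature is $(n-1)/(2\tau)$, so pulling back via the correspondence $\tau\leftrightarrow f\leftrightarrow r$ yields the stated rate. For $\Rm>0$ outside $K_\eps$, the curvature operator in an $\eps$-neck is a small perturbation of that of $\IS^{n-1}\times\IR$, whose only degeneracy lives along the axial direction; the soliton identity $\Ric=\nabla^2 f$ with nontrivial second-derivative along the axis contributes a positive correction to the would-be zero eigenvalue, and a direct computation gives $\Rm>0$ for $\eps$ small enough.

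Finally, under $\sec\ge 0$ and $\Ric>0$, the soliton starts with $\Rm\ge 0$. By Hamilton's strong maximum principle for $\Rm$ along the canonical Ricci flow, either $\Rm>0$ everywhere or $M$ locally---and by analyticity globally---splits as a nontrivial Riemannian product; the latter is incompatible with $\Ric>0$ on a non-Ricci-flat soliton. Hence $\sec>0$ globally and \cite[Theorem 1.2]{Bre14} identifies $(M^n,g,f)$ with the Bryant soliton, whose unique tangent flow at infinity is $\IS^{n-1}\times\IR$, forcing $\Gamma=1$. The main obstacle throughout is the uniqueness step: tangent flows are only metric solitons a priori, so smooth Colding--Minicozzi-type uniqueness does not apply directly, and one must carefully use smoothness of the model cylinder at the regular part of the $\mathbb{F}$-limit to translate entropy rigidity or perturbation-type arguments into the metric-measure setting.
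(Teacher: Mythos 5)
Your overall architecture is right---use Theorem \ref{thm: split tan flow} to reduce every tangent flow to $\NN\times\IR$, deduce uniqueness from a rigidity statement for the spherical cylinder among metric flows, then use uniqueness to produce the $\eps$-neck structure and finally invoke Brendle. You also correctly flag the central difficulty: tangent flows are only metric solitons, so a smooth perturbation argument cannot be run directly. But three steps have genuine gaps.

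First, the ``entropy rigidity'' route, which you offer as the primary argument, does not work as stated. It is true that every tangent flow at infinity has Nash entropy equal to $\mu_\infty$, and hence equal to the entropy of $(\IS^{n-1}/\Gamma)\times\IR$; but there is no theorem asserting that a noncollapsed shrinking metric soliton with that exact entropy value must be isometric to $(\IS^{n-1}/\Gamma)\times\IR$. Equality of entropy is not an isometric classification. The route that actually closes is your secondary one, the open-closed argument in scale, but its open part is precisely the nontrivial rigidity lemma. The paper isolates this as Lemma \ref{lem: sphere isolated}: a metric flow pair that (a) splits off a line, (b) arises as a tangent flow at infinity of a noncollapsed ancient flow, and (c) is $\mathbb{F}$-close to $(\IS^{n-1}/\Gamma)\times\IR$, must equal it. The proof combines the split-off-line property, the metric soliton structure, the upgrade of $\mathbb{F}$-convergence to local smooth convergence near the smooth cylindrical limit (via \cite{CMZ23}), and Huisken's theorem \cite{Hu85} for the compact factor. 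You gesture at all these ingredients in your closing sentence, but they need to be assembled into a precise statement before the open-closed (or ``almost connected'') argument can be executed; without it the uniqueness claim is unproved.

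Second, your derivation of $\Gamma=1$ is circular. Brendle's \cite[Theorem 1.2]{Bre14} requires the soliton to be asymptotically cylindrical with cross-sections modeled on the round $\IS^{n-1}$, not a quotient; so you must establish $\Gamma=1$ \emph{before} you invoke Brendle, whereas you invoke Brendle and then read off $\Gamma=1$ from the conclusion. The paper closes this by a topological argument: under $\Ric>0$, $f$ is strictly convex, the decay of $R$ forced by the neck structure makes $f$ proper with a unique critical point, and then \cite[Lemma 2.1]{DZ21} (or Lemma \ref{lem: sec level set}) shows every level set is diffeomorphic to $\IS^{n-1}$; since far-out level sets are nearly $\IS^{n-1}/\Gamma$, this forces $\Gamma=1$.

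Third, your excursion through Hamilton's strong maximum principle is both insufficiently hypothesized and unnecessary. The strong maximum principle for $\Rm$ needs $\Rm\ge 0$, which is strictly stronger than the hypothesis $\sec\ge 0$ in dimension $\ge 4$. Moreover, the dichotomy ``$\Rm>0$ or split'' together with $\Ric>0$ does not rule out products: a nontrivial product of two steady solitons each with $\Ric>0$ (e.g. two Bryant solitons) has $\Ric>0$, so ``splits'' is not incompatible with $\Ric>0$. Fortunately you do not need $\sec>0$ globally at all; Brendle's theorem requires only $\kappa$-noncollapsedness and the asymptotically cylindrical structure, both of which you already have once uniqueness and $\Gamma=1$ are in hand.
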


The following lemma is the key ingredient of the proof of Theorem \ref{thm: cyl flow}.

\begin{Lemma}\label{lem: sphere isolated}
    Let $\Gamma$ be a discrete group acting freely on $\mathbb{S}^n$. For any $Y>0$, there is a constant $\epsilon(n,\Gamma,Y)>0$ with the following property. Let $(\XX,(\mu_t)_{t\in I})$ be a metric flow pair, where $I=[-2,-1/2]$, satisfying
    \begin{enumerate}[(1)]
        \item  $(\XX,(\mu_t)_{t\in I})$ splits off a line;
        \item $(\XX,(\mu_t)_{t\in I})$ arises as a tangent flow at infinity of some smooth ancient Ricci flow $(M^{n+1},g_t)_{t\le 0}$ with $\inf_\tau\mathcal{N}_{o,0}(\tau)>-Y$.
        \item 
        \begin{align*}
        \dist_{\mathbb{F}}^{\{-1\}}\bigg((\XX,(\mu_t)_{t\in I}), \Big((\mathbb{S}^n/\Gamma\times\mathbb{R},\bar{g}_t)_{t\in I},(\bar{\mu}_t)_{t\in I}\Big)\bigg)<\epsilon,
    \end{align*}
    where $\bar{g}_t$ is the standard shrinking metric, and $\bar{\mu}_t$ is the standard conjugate heat flow on the cylinder defined by $d\bar\mu_t=C(|\Gamma|,n)(4\pi|t|)^{-\frac{n}{2}}\cdot(4\pi|t|)^{-\frac{1}{2}}\exp\left(-\frac{|x|^2}{4|t|}\right)d\bar{g}_t$.
    \end{enumerate} 
   Then $(\XX,(\mu_t)_{t\in I})$ is isometric to $\Big((\mathbb{S}^n/\Gamma\times\mathbb{R},\bar{g}_t)_{t\in I},(\bar{\mu}_t)_{t\in I}\Big)$.
\end{Lemma}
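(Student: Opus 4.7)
The plan is to argue by contradiction. Suppose the lemma fails. Then there exist a sequence $\eps_i\to 0$ and metric flow pairs $(\XX^i,(\mu_t^i)_{t\in I})$ satisfying hypotheses (1)--(3) with $\eps=\eps_i$, none of which is isometric to the cylinder $(\IS^n/\Gamma\times \IR,\bar g_t)_{t\in I}$. Our goal is to produce a contradiction by compactness plus rigidity of the round shrinker.

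First, I would upgrade the single-time $\mathbb{F}$-closeness at $t=-1$ to $\mathbb{F}$-convergence on the whole interval $I=[-2,-\tfrac{1}{2}]$. The Nash entropy bound $\inf_\tau\NN_{o,0}(\tau)>-Y$ passes to $\XX^i$ (it equals the entropy of the ambient ancient flow), giving uniform noncollapsedness. Since each $\XX^i$, being a tangent flow at infinity of a smooth ancient flow, is a metric soliton (\cite{Bam20c}), and the cylinder is a standard metric soliton as well, the self-similarity under parabolic rescaling propagates closeness at $t=-1$ to every time in $I$. Bamler's $\mathbb{F}$-compactness then yields, along a subsequence, $(\XX^i,(\mu_t^i)_{t\in I}) \xrightarrow{\mathbb{F}} (\IS^n/\Gamma\times\IR,(\bar\mu_t)_{t\in I})$.

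Next, since the limit is entirely smooth with uniformly bounded geometry and its conjugate heat flow is the standard Gaussian on the cylinder, Bamler's smooth convergence criterion (\cite[Theorem 2.5]{Bam20c}, and \cite{CMZ23}) upgrades the $\mathbb{F}$-convergence to smooth pointed Cheeger--Gromov--Hamilton convergence on the regular parts: arbitrarily large regions of each time-slice $\XX^i_{-1}$ look increasingly like the cylinder in $C^\infty$.

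Now I invoke hypothesis (1): $\XX^i$ splits off a line as a metric soliton, hence $\XX^i\cong\widetilde{\XX}^i\times\IR$ where $\widetilde{\XX}^i$ is an $n$-dimensional metric shrinking soliton with singular set of codimension at least four. The smooth convergence above descends to smooth convergence $\widetilde{\XX}^i\to\IS^n/\Gamma$ on regular parts. Because $\IS^n/\Gamma$ is compact and smooth, Bamler's partial regularity forces, for large $i$, the entire soliton $\widetilde{\XX}^i$ to be smooth, compact, and smoothly close to $\IS^n/\Gamma$. Then a standard rigidity argument (smooth closeness propagates positive curvature operator, and by Huisken's theorem \cite{Hu85} the only smooth compact shrinkers with positive curvature operator are round space forms; the topology pins the quotient down to $\IS^n/\Gamma$) shows $\widetilde{\XX}^i\cong\IS^n/\Gamma$, whence $\XX^i\cong\IS^n/\Gamma\times\IR$, a contradiction.

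The main obstacle is the third step: transporting the abstract metric-flow splitting of $\XX^i$ into the smooth Cheeger--Gromov setting in a way compatible with the shrinker structure, so that classical compact shrinker rigidity actually applies. Concretely, one must verify that the line factor detected by hypothesis (1) aligns in the limit with the $\IR$-factor of the cylinder (which is where the self-similarity/soliton identities, plus uniqueness of parallel fields on the smooth limit, are used), and that Bamler's codimension-four estimate for the singular set of $\widetilde{\XX}^i$ interacts correctly with the compact smooth limit to rule out singular points altogether for large $i$.
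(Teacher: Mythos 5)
Your proposal follows essentially the same route as the paper: argue by contradiction, upgrade the $\mathbb{F}$-closeness at $t=-1$ to local smooth convergence (the paper cites \cite[Corollary 1.3]{CMZ23} for this step), use the splitting from hypothesis (1) to isolate the non-line factor as a smooth shrinking soliton almost isometric to $\IS^n/\Gamma$, and conclude via Huisken's rigidity \cite{Hu85}. The concerns you raise at the end (aligning the split line with the cylinder's $\IR$-factor, and handling the possible singular set of $\widetilde{\XX}^i$) are legitimate technical points that the paper's proof treats briskly, but your overall approach is the same.
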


\begin{proof}
    We argue by contradiction. Suppose there is a sequence of metric flow pairs $\{(\XX^i,(\mu_t^i)_{t\in I})\}_{i=1}^\infty$ satisfying conditions (1) and (2) (they may not arise as tangent flows of the same ancient flow). Assume that
    \begin{align}\label{the-nonsense}
        \dist_{\mathbb{F}}^{\{-1\}}\bigg((\XX^i,(\mu^i_t)_{t\in I}), \Big((\mathbb{S}^n/\Gamma\times\mathbb{R},\bar{g}_t)_{t\in I},(\bar{\mu}_t)_{t\in I}\Big)\bigg)\longrightarrow 0,
    \end{align}
    but none of them is isometric to $\Big((\mathbb{S}^n/\Gamma\times\mathbb{R},\bar{g}_t)_{t\in I},(\bar{\mu}_t)_{t\in I}\Big)$.

By \cite[Corollary 1.3]{CMZ23}, the convergence in \eqref{the-nonsense} is a local smooth one. Therefore, whenever $i$ is large enough, there is a large open set in $\XX^i$, which is almost isometric to a large open set of the cylindrical flow.  Since  $\XX^i$ also splits off a line, it follows that its other factor must be almost isometric to $\mathbb{S}^n/\Gamma$.

On the other hand, recall that each $\XX^i$ is also a metric soliton. Writing $\XX^i=\mathcal{Y}^i\times\mathbb{R}$, we know that each $\mathcal{Y}^i$  not only is a smooth shrinking soliton, but also is almost isometric to $\mathbb{S}^n/\Gamma$. By Huisken's result \cite{Hu85}, every shrinking soliton that is almost isometric to $\mathbb{S}^n/\Gamma$ must be isometric to it, namely, $\mathcal{Y}^i\cong\mathbb{S}^n/\Gamma$ whenever $i$ is large enough. This contradicts our assumption that  none of $(\XX^i,(\mu^i_t)_{t\in I})$ is isometric to $\Big((\mathbb{S}^n/\Gamma\times\mathbb{R},\bar{g}_t)_{t\in I},(\bar{\mu}_t)_{t\in I}\Big)$.

\end{proof}

Now we are ready to prove Theorem \ref{thm: cyl flow}.
\begin{proof}[Proof of Theorem \ref{thm: cyl flow}.]
    Suppose that one tangent flow at infinity is $(\IS^{n-1}/\Gamma)\times \IR$ and suppose to the contrary that there is a different tangent flow at infinity $\widetilde{\XX}.$
    By Theorem \ref{thm: split tan flow}, $\widetilde{\XX}=\XX\times \IR$ for some metric soliton $\XX.$ 
    By the proof of \cite[Proposition 3.2]{BCDMZ21}, $\XX\times\mathbb{R}$ and $(\IS^{n-1}/\Gamma)\times\mathbb{R}$ can be almost connected, which means that for any $\eps>0,$ there are metric solitons $\{\XX^i\}_{i=0}^N$ such that
    \[
        \XX^0 = \XX,\quad
        \XX^N = \IS^{n-1}/\Gamma,\quad
        {\rm dist}_{\mathbb{F}}^J(\XX^i\times\mathbb{R},\XX^{i+1}\times\mathbb{R})<\eps.
    \]
    This is a contradiction to Lemma \ref{lem: sphere isolated}.
    Therefore, there is a unique tangent flow at infinity for the steady soliton in question.

    The rest of the proof is the same as that of the four dimensional case in \cite{BCDMZ21}, which relies only on the fact that the tangent flow at infinity is unique.

    Finally, if we assume $\sec\ge 0, \Ric>0,$ then the level sets should be diffeomorphic to $\IS^{n-1}$ by \cite[Lemma 2.1]{DZ21}. Thus, $\Gamma=1$, and $(M,g,f)$ is asymptotically cylindrical in the sense of Brendle \cite{Bre14}.
    By \cite[Theorem 1.2]{Bre14}, $(M^n,g,f)$ is isometric to the Bryant soliton, up to rescaling, c.f. \cite{DZ20a,ZZ23}.
\end{proof}

Under the assumption of $\sec\ge 0,\Ric>0,$ we shall show that the only possible tangent flows at infinity are $\IS^3\times \IR$  and $\IS^2\times \IR^2.$

\begin{Corollary}
\label{cor: 4d tan flow}
    Let $(M^4,g,f)$ be a complete noncollapsed steady gradient Ricci soliton with $\sec\ge 0, \Ric>0$. 
    Then it admits a unique tangent flow at infinity which can only be
    \[
    \IS^3\times \IR \qquad \text{ or }
    \qquad \IS^2\times \IR^2.
    \]
\end{Corollary}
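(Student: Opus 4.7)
The plan is to invoke Theorem \ref{thm: tan flow oo} to confine the tangent flow at infinity to three candidates and then discard the two unwanted ones using Theorem \ref{thm: cyl flow} and Proposition \ref{prop: S^2 x R / Z_2}. First I would check that the hypothesis of Theorem \ref{thm: tan flow oo} holds. The identity $R+|\nabla f|^2=1$ together with $\sec\ge 0$ (which forces $R\ge 0$) gives $R\le 1$; and under $\sec\ge 0$ every sectional curvature is dominated by the maximal Ricci eigenvalue, hence by $R$. Thus $(M^4,g)$ has uniformly bounded curvature and Theorem \ref{thm: tan flow oo} applies, producing a unique tangent flow at infinity belonging to $\{(\IS^3/\Gamma)\times\IR,\ \IS^2\times\IR^2,\ (\IS^2\times_{\mathbb{Z}_2}\IR)\times\IR\}$.

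If the tangent flow is $(\IS^3/\Gamma)\times\IR$, then Theorem \ref{thm: cyl flow} together with $\sec\ge 0,\Ric>0$ forces $\Gamma=1$, so the tangent flow is exactly $\IS^3\times\IR$ (and $M$ is the Bryant soliton). The case $\IS^2\times\IR^2$ is one of the two allowed conclusions.

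The main obstacle is to eliminate $(\IS^2\times_{\mathbb{Z}_2}\IR)\times\IR$, which I plan to do by reducing to Proposition \ref{prop: S^2 x R / Z_2}. Suppose, for contradiction, the tangent flow is $(\IS^2\times_{\mathbb{Z}_2}\IR)\times\IR$. Since this limit is smooth everywhere, Bamler's smoothness theory (cf.\ the discussion in \cite{CMZ23}) upgrades the defining $\mathbb{F}$-convergence to pointed smooth Cheeger--Gromov--Hamilton convergence on precompact regions around regular base points. Picking a base point $\bar x$ in the $t=-1$ slice of the tangent flow with $R(\bar x)=1$ and selecting corresponding $x_i\in M$ (necessarily with $x_i\to\infty$ and $R(x_i)\to 0$, since the parabolic scale is $\tau_i^{-1}\to 0$), and setting $r_i^{-2}:=R(x_i)$, I would obtain a pointed smooth limit
\[
(M^4,r_i^{-2}g_{r_i^2 t},x_i)\longrightarrow \bigl((\IS^2\times_{\mathbb{Z}_2}\IR)\times\IR,(0,\bar x)\bigr).
\]
This is precisely the hypothesis of Proposition \ref{prop: S^2 x R / Z_2}, whose proof rests only on Lemma \ref{lem: conv to a shrinker} and the level-set classification of Lemma \ref{lem: sec level set}, both fully available under the background hypotheses $\sec\ge 0,\Ric>0$. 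The proposition then yields an embedding $\mathbb{RP}^2\hookrightarrow\{f=f(x_i)\}$ for large $i$, contradicting $\{f=f(x_i)\}\cong\IS^3$ or $\IR^3$.

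The delicate point is matching the tangent-flow scale $\tau_i^{-1}$ with the Cheeger--Gromov scale $R(x_i)^{-1}$: one must verify that choosing $x_i$ to approximate a fixed regular point $\bar x$ at parabolic time $-1$ of the tangent flow indeed renormalizes by a factor comparable to $R(x_i)^{-1}$, so that Proposition \ref{prop: S^2 x R / Z_2} applies verbatim. Once the pointed smooth convergence is in hand, this reduces to the continuity of scalar curvature under smooth convergence, and the conclusion of the corollary follows.
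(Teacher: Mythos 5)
Your proof is correct and follows essentially the same route as the paper: verify bounded curvature so that Theorem \ref{thm: tan flow oo} applies, dispatch $(\IS^3/\Gamma)\times\IR$ via Theorem \ref{thm: cyl flow}, and rule out $(\IS^2\times_{\mathbb{Z}_2}\IR)\times\IR$ by upgrading the $\mathbb{F}$-convergence to pointed smooth convergence and producing an $\mathbb{RP}^2$ embedded in a level set, contradicting Lemma \ref{lem: sec level set}. The only cosmetic difference is that you route the last step through the argument of Proposition \ref{prop: S^2 x R / Z_2} (correctly noting that its proof does not actually use the background assumption that the tangent flow is $\IS^2\times\IR^2$), whereas the paper cites the proof of Theorem \ref{thm: cyl flow}; both reduce to the same use of Lemma \ref{lem: conv to a shrinker} and the topological obstruction.
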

\begin{proof}
    Since $R+|\nabla f|^2=1$ and $\sec\ge 0,$ the curvature is bounded and thus we can apply Theorem \ref{thm: tan flow oo}.
    If the tangent flow at infinity is $(\IS^3/\Gamma)\times \IR$, by Theorem \ref{thm: cyl flow}, $\Gamma=1$ and  $(M^4,g,f)$ is in fact isometric to the Bryant soliton.

    It remains to rule out $(\IS^2\times_{\mathbb{Z}_2} \IR)\times \IR$.
    By the proof of Theorem \ref{thm: cyl flow}, 
    there is a sequence {$x_i\to\infty$}, such that $\{f=f(x_i)\}$, after rescaling, converge to $\IS^2\times_{\mathbb{Z}_2}\IR.$
    In particular, $\mathbb{RP}^2$ as the tip of $\IS^2\times_{\mathbb{Z}_2}\IR$ can be embedded into $\{f=f(x_i)\}$, for large $i.$
    By Lemma \ref{lem: sec level set}, $\{f=f(x_i)\}$ is diffeomorphic to either $\IS^3$ or $\IR^3.$ This is a contradiction because $\mathbb{RP}^2$ cannot be embedded into $\IS^3$ or $\IR^3.$
\end{proof}

\appendix
\section{A Compactness Theorem and Its Applications}

We shall prove a compactness theorem for a slightly larger class which includes, for example, the steady solitons with $\sec\ge 0.$
Since we mainly follow Perelman's arguments in \cite{Per02}, (see also, \cite{KL08}), we put this section in the appendix.

\def \AA {\mathcal{A}}

\begin{Definition}
For constants $n\in \mathbb{N},\mu_\infty<0,\Lambda\ge 1,$ we denote by
\[
    \AA(n,\mu_\infty,\Lambda)
\]
the class of ancient flows $(M^n,g_t)_{t\le 0}$ with the following properties.
\begin{enumerate}[(C1)]
\item For some $o\in M$,  (and thus for any $o\in M$,)
\[
    \inf_{\tau>0} \NN_{o,0}(\tau)\ge \mu_\infty.
\]
    \item Hamilton's trace Harnack inequality holds, i.e.,
    \begin{equation}
        \tag{Harnack}
        \label{cond: trace Harnack}
        \partial_t R - 2\nabla_X R
        + 2\Ric(X,X)\ge 0,
    \end{equation}
    on $M\times(-\infty,0],$
    for any vector field $X.$
    \item 
    \begin{equation}
\tag{Curv}
\label{cond: Ric and pinch}
    \Ric\ge 0,\quad
    |{\Rm}|\le \Lambda R,
\end{equation} 
everywhere on $M\times (-\infty,0].$
\item $g_t$ has bounded curvature over compact time intervals. (The bound may not be uniform.)

\end{enumerate}
\end{Definition}

\def \Aclass {\mathcal{A}(n,\mu_\infty,\Lambda)}

We shall prove a (smooth) pre-compactness theorem for $\AA(n,\mu_\infty, \Lambda)$ following Perelman's arguments in \cite[Section 11]{Per02}. 

\begin{Theorem}
\label{thm: cpt}
$\AA(n,\mu_\infty,\Lambda)$ is pre-compact under the Cheeger-Gromov-Hamilton convergence, after parabolic rescaling by the scalar curvature.

To be more specific,
    let $(M^n_i,g^i_t)_{t\le 0}\in \AA(n,\mu_\infty,\Lambda), r_i>0,$ and $x_i\in M_i$, such that  $R_{g^i_0}(x_i)\le r_i^{-2}.$
    Then by passing to a subsequence, 
    \[
        (M^n_i, r_i^{-2}g^i_{r_i^2t}, (x_i,0))
        \to (M^n_\infty, g^\infty_{t}, (x_\infty,0)),
    \]
    locally smoothly over $(-\infty,0].$ $(M^n_\infty, g^\infty_{t})_{t\le 0}$ is noncollapsed and satisfies \eqref{cond: trace Harnack} and \eqref{cond: Ric and pinch} with the same constants $\mu_\infty,\Lambda$, but $g^\infty_{t}$ may have unbounded curvature.
\end{Theorem}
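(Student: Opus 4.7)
The plan is to adapt Perelman's compactness argument for $\kappa$-solutions from \cite[\S 11]{Per02}, in the streamlined form of Brendle \cite{Bre22}, to the class $\mathcal{A}(n,\mu_\infty,\Lambda)$. First, I parabolically rescale $\tilde g^i_t := r_i^{-2} g^i_{r_i^2 t}$. Since (C1)--(C3) are scale-invariant, the rescaled flows still lie in $\mathcal{A}(n,\mu_\infty,\Lambda)$ and satisfy $\tilde R^i(x_i,0) \le 1$. The trace Harnack (C2) with $X = 0$ gives $\partial_t R \ge 0$ on any ancient flow with $\Ric \ge 0$, so $R$ is nondecreasing in $t$; combined with the pinching $|\Rm| \le \Lambda R$ from (C3), any pointwise bound on $\tilde R$ at $t = 0$ over a region $\Omega$ upgrades automatically to a pointwise bound on $|\tilde \Rm|$ over $\Omega \times (-\infty,0]$. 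Moreover $\Ric \ge 0$ forces $B_{\tilde g^i_t}(x_i,\rho) \subseteq B_{\tilde g^i_0}(x_i,\rho)$ for all $t \le 0$, so the problem reduces to producing uniform spatial curvature estimates at $t = 0$.

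The heart of the argument is then the following assertion: for every $\rho > 0$ there exists $C(\rho) = C(n,\mu_\infty,\Lambda,\rho) < \infty$ with $\tilde R^i \le C(\rho)$ on $B_{\tilde g^i_0}(x_i,\rho)$. I would prove this by a Perelman-style point-picking blow-up contradiction. Supposing the bound fails, extract $y_i \in B_{\tilde g^i_0}(x_i,\rho)$ with $\tilde R^i(y_i,0) \to \infty$, and invoke the standard point-selection lemma (\cite[Lemma 8.1]{Per02}) to obtain replacement points $(\bar y_i,0)$ with $Q_i := \tilde R^i(\bar y_i,0) \to \infty$, $\bar y_i \in B_{\tilde g^i_0}(x_i,2\rho)$, and $\tilde R^i \le 2 Q_i$ on a $\tilde g^i_0$-ball of radius $\sigma_i Q_i^{-1/2}$ with $\sigma_i \to \infty$. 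Rescaling by $Q_i$ and combining Shi's derivative estimates with (C1) and Hamilton's compactness theorem, extract a smooth subsequential limit $(\hat M_\infty,\hat g^\infty_t,\hat y_\infty) \in \mathcal{A}(n,\mu_\infty,\Lambda)$ satisfying $\hat R^\infty(\hat y_\infty,0) = 1$. The contradiction is then extracted by comparing this with the behavior of the base points: in the $\hat g^i$-scale, $x_i$ sits at distance $\sqrt{Q_i}\,\rho \to \infty$ from $\bar y_i$ while carrying $\hat R^i(x_i,0) \le 1/Q_i \to 0$, which combined with the asymptotic structure theory of noncollapsed ancient flows in $\mathcal{A}$ (Perelman's asymptotic shrinker together with the trace Harnack and the pinching $|\Rm| \le \Lambda R$) is inconsistent with the existence of $\hat M_\infty$; here the uniform pinching from (C3) plays the structural role filled by Perelman's canonical-neighborhood theorem in the classical $\kappa$-solution case.

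Once the spatial bound is in hand, Hamilton's compactness theorem (together with Shi's estimates and the $\kappa$-noncollapsing from (C1)) delivers the local smooth CGH subsequential limit $(M^n_\infty,g^\infty_t,(x_\infty,0))$ over $(-\infty,0]$. Conditions (C1)--(C3) pass to the limit in a routine way: the Nash-entropy lower bound (C1) is lower semicontinuous under smooth CGH convergence, while the trace Harnack (C2) and the pinching $|\Rm| \le \Lambda R$ (C3) are pointwise inequalities stable under local smooth convergence. Condition (C4) may fail in the limit, since the bound $C(\rho)$ is allowed to grow with $\rho$, exactly matching the stated conclusion that $g^\infty_t$ may have unbounded curvature. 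The main obstacle throughout is the spatial curvature bound in the middle step; the full point-picking contradiction requires the delicate interplay of noncollapsing, Harnack, and uniform pinching specific to the class $\mathcal{A}(n,\mu_\infty,\Lambda)$.
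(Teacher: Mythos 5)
Your overall framework (point-picking blow-up, noncollapsing to extract limits, trace Harnack to propagate bounds backward in time) is in the right spirit and does match the Perelman/Brendle template, but there is a genuine gap at exactly the step you flag as ``the heart of the argument,'' and you are missing the actual mechanism that produces the contradiction.

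The claimed contradiction is not a contradiction. After point-picking at $\bar y_i$ and rescaling by $Q_i$, you observe that $x_i$ sits at $\hat g^i$-distance $\sim \sqrt{Q_i}\rho \to \infty$ with scalar curvature $\hat R^i(x_i,0)\le Q_i^{-1}\to 0$, and you assert this is ``inconsistent with the existence of $\hat M_\infty$.'' But ``curvature decays to zero far from a fixed basepoint'' is the generic behavior of a noncompact ancient flow in $\AA$ (it happens on the Bryant soliton, on $\IS^2\times\IR$, etc.), so nothing is contradicted. The genuine contradiction in Perelman's argument comes from asymptotic volume: one must show the blown-up limit has \emph{positive} AVR, which violates the fact that all flows in $\AA(n,\mu_\infty,\Lambda)$ have zero AVR (Theorem \ref{thm: zero AVR}, resting on Carrillo--Ni). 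Your blow-up, however, carries no volume information at all, because your hypothesis is only a pointwise curvature bound at the single point $x_i$; there is nothing that forces the limit to have positive AVR, and hence nothing to contradict.

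This is exactly why the paper structures the proof differently. Proposition \ref{prop: reg rad by vol rad} proves the curvature bound on a ball from a \emph{volume ratio} lower bound $\theta$ on that ball (not from a central curvature bound): the volume ratio is what survives the rescaling and produces ${\rm AVR}(g^\infty_0)>0$ in the blown-up limit, contradicting Theorem \ref{thm: zero AVR}. Then Lemma \ref{lem: weak cpt} gives compactness whenever the volume lower bound is available. Finally, the proof of Theorem \ref{thm: cpt} bridges ``$R(x_i)\le 1$'' to ``$|B(x_i,1)|\ge\theta$'' by a separate contradiction: if the volume collapses, rescale at the scale $s_i$ where the Bishop--Gromov ratio equals $\omega_n/2$, apply the weak compactness at that scale, and obtain a flat limit which must have zero AVR (being flat with small volume ratio) yet must be noncollapsed by (C1); that is the contradiction. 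Your ``heart assertion'' $\tilde R^i\le C(\rho)$ on $B(x_i,\rho)$ given $\tilde R^i(x_i,0)\le 1$ is in fact Perelman's long-range curvature estimate, which the paper derives only \emph{after} Theorem \ref{thm: cpt}; treating it as the first ingredient without the volume-ratio scaffolding leaves the blow-up argument with no lever to pull. To repair your proof, you should either import the volume-ratio version of the key lemma as the paper does, or, if you insist on proving the long-range estimate first, explicitly track the volume of the point-picked ball and show that it forces positive AVR in the limit.
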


\begin{Remark}
\label{rmk: steady Harnack}
    As observed in \cite[Theorem 1.13]{MZ21}, \eqref{cond: trace Harnack} holds on a steady soliton if $\Ric\ge 0.$
Thus, the canonical Ricci flow induced by any noncollapsed steady soliton with $\sec\ge 0$ is in $\AA(n,\mu_\infty, n)$, for some $\mu_\infty<0.$
\end{Remark}

\begin{Remark}
    By \cite[Theorem 1.13]{MZ21}, assuming the other conditions, condition (C1) is equivalent to Perelman's notion of $\kappa$-noncollapsing. This was claimed by Perelman under (not essentially) stronger curvature conditions.
    Recall that an ancient flow $(M^n,g_t)_{t\le 0}$ is (strongly) $\kappa$-noncollapsed, for some $\kappa>0,$ if for any $(x,t)\in M\times (-\infty,0]$, 
    \[
        \sup_{B_t(x,r)} R \le r^{-2}
        \implies |B_t(x,r)|_{g_t}\ge \kappa\, r^n.
    \]
\end{Remark}

Our approach to prove Theorem \ref{thm: cpt} is similar to Ni's arguments for ancient K\"ahler-Ricci flows.  Similar arguments also apply to steady solitons. See, e.g., \cite{DZ18,CDM22}.

First, we prove that any ancient flow in this class has zero aymptotic volume growth (AVR), which relies on the fact that any non-flat shrinker has zero AVR. This was proved by Carrillo-Ni in \cite[Corollary 1.1]{CaN10}. Recall that the AVR of a Riemannian manifold $(M^n,g)$ with $\Ric\ge 0$ is defined as
\[
    {\rm AVR}(g) := \lim_{r\to\infty}
    \frac{|B(o,r)|}{r^n},
\]
where $o\in M$. Clearly, AVR does not depend on the choice of $o.$

We need the following simple Lemma.
\begin{Lemma}
\label{lem: AVR decreasing}
    Let $(M^n,g_t)_{t\in[-T,0]}$ be a complete Ricci flow with
    \[
        0\le \Ric\le \Lambda,
    \]
    uniformly on $M\times [-T,0]$, for some $\Lambda\ge 1,T>0.$ Then  
    \[
        {\rm AVR}(g_{-T})\ge {\rm AVR}(g_0).
    \]
\end{Lemma}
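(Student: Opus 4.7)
The plan is to combine two elementary consequences of the curvature bounds---a distance distortion estimate from $\Ric\le\Lambda$ and a pointwise volume form comparison from $\Ric\ge 0$---and then pass to the limit $r\to\infty$ via Bishop--Gromov.

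First, differentiating the length of any curve $\gamma$ under the flow yields $\partial_t L_t(\gamma)\ge -\Lambda L_t(\gamma)$ from $\Ric\le\Lambda g$; integrating gives $L_{-T}(\gamma)\le e^{\Lambda T}L_0(\gamma)$, hence $d_{-T}(o,x)\le e^{\Lambda T}d_0(o,x)$ and the set inclusion $B_0(o, re^{-\Lambda T}) \subseteq B_{-T}(o,r)$. Second, $\Ric\ge 0$ forces $R\ge 0$, so the scalar volume evolution $\partial_t\sqrt{\det g_t}=-R\sqrt{\det g_t}$ yields $dV_{-T}\ge dV_0$ pointwise on $M$. Combining the inclusion and the pointwise volume comparison,
\[
    |B_{-T}(o,r)|_{g_{-T}} \;\ge\; |B_{-T}(o,r)|_{g_0} \;\ge\; |B_0(o, re^{-\Lambda T})|_{g_0}.
\]
Dividing by $r^n$ and letting $r\to\infty$, Bishop--Gromov applied to both time slices (whose Ricci curvatures are non-negative) identifies the two sides as the respective asymptotic volume ratios.

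The principal obstacle is that this direct comparison produces a multiplicative factor of $e^{-n\Lambda T}$, yielding only ${\rm AVR}(g_{-T})\ge e^{-n\Lambda T}\,{\rm AVR}(g_0)$, which is short of the sharp inequality stated. Closing the gap likely requires a direct monotonicity argument for ${\rm AVR}(g_t)$ in $t$, obtained by differentiating $|B_t(o,r)|_{g_t}$ along the flow via
\[
    \tfrac{d}{dt}|B_t(o,r)|_{g_t}=-\int_{B_t(o,r)} R\,dV_t + \int_{\partial B_t(o,r)} v\,dA_t
\]
and balancing the (non-negative) outward boundary speed $v$---positive because distances decrease under $\Ric\ge 0$---against the (non-positive) bulk term, ideally showing that in the asymptotic regime $r\to\infty$ the boundary flux becomes negligible compared with the $r^n$ normalization. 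Alternatively, a Jacobi-field comparison along corresponding radial geodesics at times $-T$ and $0$ may yield the sharp monotonicity. Fortunately, for the downstream use in this appendix---propagating vanishing ${\rm AVR}$ in the proof that non-flat ancient flows in $\mathcal{A}(n,\mu_\infty,\Lambda)$ have zero asymptotic volume growth---even the weaker bound already suffices, since under it ${\rm AVR}(g_{-T})>0$ if and only if ${\rm AVR}(g_0)>0$.
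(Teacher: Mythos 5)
Your proposal contains a genuine gap, and to your credit you flag it yourself: the multiplicative distance estimate $d_{-T}\le e^{\Lambda T}d_0$ from $\partial_t L_t(\gamma)\ge -\Lambda L_t(\gamma)$ produces the factor $e^{-n\Lambda T}$ upon dividing by $r^n$, and that factor does not disappear as $r\to\infty$. The missing ingredient is Perelman's distance distortion estimate \cite[Lemma 8.3]{Per02}: under a pointwise Ricci upper bound $\Ric\le\Lambda$ it gives an \emph{additive} lower bound on the speed of distances, $\partial_t|xy|_t\ge -5n\Lambda$, hence $|xy|_{-T}\le |xy|_0+5n\Lambda T$ and the ball inclusion
\[
B_0(x,r)\subseteq B_{-T}\bigl(x,\,r+5n\Lambda T\bigr),
\]
with an additive rather than multiplicative enlargement of the radius. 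Combined with the volume form comparison you already have (from $\Ric\ge 0$, $\partial_t|\Omega|_t=-\int_\Omega R\,dg_t\le 0$), one obtains $|B_{-T}(x,r+5n\Lambda T)|_{-T}\ge |B_0(x,r)|_0$; dividing by $r^n$ and using $(r+5n\Lambda T)^n/r^n\to 1$ as $r\to\infty$ yields the sharp inequality ${\rm AVR}(g_{-T})\ge{\rm AVR}(g_0)$. This is precisely the route the paper takes. Your alternative suggestions for closing the gap (differentiating $|B_t(o,r)|_{g_t}$ and balancing bulk against boundary flux, or a Jacobi field comparison) are heuristically in the right direction but not carried out; the clean fix is to replace the Gr\"onwall-type multiplicative bound with Perelman's additive one. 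Your final remark---that the weaker bound ${\rm AVR}(g_{-T})\ge e^{-n\Lambda T}{\rm AVR}(g_0)$ already suffices to propagate the vanishing of AVR in Theorem \ref{thm: zero AVR}---is correct and a good observation, but the sharp statement does hold once the right distance estimate is used.
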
 
\begin{proof}
    By Perelman's distance expanding estimates, \cite[Lemma 8.3]{Per02},
    \[
        \partial_t |xy|_t
        \ge -5n\Lambda,
    \]
    for any $t\in [-T,0],x,y\in M.$ So
    \[
        |xy|_{0}-|xy|_{-T}
        \ge -5n\Lambda T,
    \]
    and thus
    \[
        B_0(x,r)\subseteq B_{-T}(x, 5n\Lambda T + r),
    \]
    for any $r>0.$
    For any measurable subset $\Omega\subseteq M,$ and any $t\in [-T,0],$
    \[
        \partial_t |\Omega|_t
        = -\int_{\Omega}R\,dg_t
        \le 0.
    \]
    Thus,
    \[
        |B_{-T}(x, 5n\Lambda T + r)|_{-T}
        \ge |B_0(x,r)|_0.
    \]
    The conclusion follows by dividing $r^n$ and then taking $r\to \infty.$
\end{proof}

\begin{Theorem}
\label{thm: zero AVR}
    Let $(M^n,g_t)_{t\le 0}\in \AA(n,\mu_\infty,\Lambda).$ 
    Then Perelman's asymptotic shrinkers  exist, and
    \[
        {\rm AVR}(g_0)=0.
    \]  
\end{Theorem}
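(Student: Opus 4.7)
The plan is to combine Perelman's asymptotic shrinker construction with the Carrillo--Ni vanishing AVR theorem for non-flat shrinkers and the monotonicity Lemma \ref{lem: AVR decreasing}. Fix $o\in M$ and consider Perelman's $\ell$-function $\ell(x,\tau)$ based at $(o,0)$. Reduced volume monotonicity, together with the bound $\widetilde V(\tau)\le (4\pi)^{n/2}$, provides, for each $\tau>0$, a point $y_\tau$ with $\ell(y_\tau,\tau)\le n/2$. I would take $\tau_i\to\infty$ and rescale $g^i_t:=\tau_i^{-1}g_{\tau_it}$, basing at $(y_{\tau_i},-1)$. Condition (C1) and \cite[Theorem 1.13]{MZ21} supply a uniform $\kappa$-noncollapsing constant along the rescaled sequence. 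The bound on $\ell$ at the basepoint, combined with trace Harnack (C2), localizes $R$ in a parabolic neighborhood of $(y_{\tau_i},-1)$, and the pinching $|\Rm|\le \Lambda R$ (C3) upgrades this to a full Riemann bound. Hamilton's compactness theorem then extracts a smooth pointed Cheeger--Gromov subsequential limit $(M_\infty,g^\infty_t,y_\infty)$, and Perelman's integral identity for $\ell$ (together with standard derivative estimates) promotes the limit at $t=-1$ to a smooth gradient shrinking soliton with potential $f_\infty=\lim_i\ell(\cdot,\tau_i)$.

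Next I would verify non-flatness and apply Carrillo--Ni. The Nash entropy passes to smooth Cheeger--Gromov limits of noncollapsed flows, so the shrinker entropy of $(M_\infty,g^\infty_{-1},f_\infty)$ equals $\mu_\infty<0$; since the Gaussian shrinker on $\IR^n$ has entropy $0$, the limit cannot be flat. Also, $\Ric\ge 0$ descends to the limit by (C3). Hence \cite[Corollary 1.1]{CaN10} applies and yields $\mathrm{AVR}(g^\infty_{-1})=0$.

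Finally I would transfer the vanishing to $g_0$. Bishop--Gromov gives $\mathrm{AVR}(g^i_{-1})\le |B_{g^i_{-1}}(y_{\tau_i},r)|/r^n$ for every $r>0$. Smooth convergence yields $|B_{g^i_{-1}}(y_{\tau_i},r)|\to |B_{g^\infty_{-1}}(y_\infty,r)|$ for each fixed $r$, so letting $r\to\infty$ gives $\limsup_i\mathrm{AVR}(g^i_{-1})\le \mathrm{AVR}(g^\infty_{-1})=0$. Scale invariance of AVR yields $\mathrm{AVR}(g_{-\tau_i})=\mathrm{AVR}(g^i_{-1})\to 0$. Condition (C4) provides bounded curvature on each compact time interval, so Lemma \ref{lem: AVR decreasing} applies on $[-\tau_i,0]$ and gives $\mathrm{AVR}(g_0)\le \mathrm{AVR}(g_{-\tau_i})\to 0$, whence $\mathrm{AVR}(g_0)=0$. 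The main technical hurdle lies in the first paragraph: verifying that the pinching $|\Rm|\le \Lambda R$ together with trace Harnack genuinely substitutes for Perelman's stronger hypotheses in \cite[\S11]{Per02}, so that one obtains smooth rather than merely weak convergence to a smooth gradient shrinker. This requires combining the $\ell$-upper bound, the Harnack inequality, and Shi-type derivative estimates to produce uniform local bounds on all covariant derivatives of curvature in a fixed parabolic neighborhood of the basepoints $(y_{\tau_i},-1)$.
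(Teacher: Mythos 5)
Your proposal is correct and follows essentially the same route as the paper: both construct Perelman's asymptotic shrinker under the Harnack and pinching hypotheses, invoke Carrillo--Ni to conclude the (non-flat) shrinker limit has vanishing AVR, and transfer this back to $g_0$ via Lemma~\ref{lem: AVR decreasing}. The only cosmetic difference is that the paper argues by contradiction (assuming ${\rm AVR}(g_0)\ge c>0$ and propagating the lower bound to the shrinker limit), whereas you argue directly via upper semicontinuity of the AVR from Bishop--Gromov; both are the same argument read in opposite directions.
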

\begin{proof}
    The proof is the same as that of \cite[Corollary 5.2]{MZ21}. We rely on the fact that non-flat shrinkers have zero AVR, which was proved by Carrillo and Ni in \cite[Corollary 1.1]{CaN10}.
    We briefly sketch the proof.

     By Perelman's arguments, (see, e.g., \cite{MZ21},) Perelman's asymptotic shrinkers exist assuming the Harnack inequality and \eqref{cond: Ric and pinch}.
     That is, for any $\tau_i\to \infty,$ taking $p_i$ such that $\ell(p_i,\tau_i)\le n/2,$ 
     \[
        (M^n, \tau_i^{-1}g_{\tau_i t}, (p_i,-1))
        \to (M_\infty^n,g^\infty_t,(p_\infty,-1)),
     \]
     for some shrinker $(M_\infty^n,g^\infty_t),$ over $(-\infty,0).$ Here, $\ell$ refers to Perelman's $\ell$-function in \cite[Section 11]{Per02}.
     Suppose to the contrary that ${\rm AVR}(g_0)\ge c,$ for some $c>0$. 
     By Lemma \ref{lem: AVR decreasing}, for any $t\le 0,$
     \[
        {\rm AVR}(g_t)\ge {\rm AVR}(g_0)\ge c.
     \]
     Passing to the limit, ${\rm AVR}(g^\infty_{-1})>0,$ which is a contradiction to \cite[Corollary 1.1]{CaN10}.
     
\end{proof}

For completeness, we carry out Perelman's arguments based on the result above about zero AVR.
For simplicity, we write
\[
    |B(x,r)|_g := |B_g(x,r)|_g.
\]

\begin{Proposition}
\label{prop: reg rad by vol rad}
Let $(M^n, g_t)_{t\le 0}\in \AA(n,\mu_\infty,\Lambda).$
    Then for any $\theta\in (0,1),$ there is a constant $C=C(n,\mu_\infty,\Lambda,\theta)$ with the following property.
    If $x\in M,s>0$ satisfy
    \[
        \frac{|B(x,s)|_{g_0}}{s^n} \ge \theta,
    \]
    then
    \[
        s^2 R \le C,\quad \text{ on } B_{0}(x,s).
    \]
    As a consequence of \eqref{cond: trace Harnack}, 
    \[
         s^2 R \le C,\quad \text{ on } B_{0}(x,s)\times (-\infty,0].
    \]
\end{Proposition}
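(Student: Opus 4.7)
The plan is to argue by contradiction in the spirit of Perelman's asymptotic analysis: if the conclusion fails along some sequence, one produces a smooth blow-up limit in $\AA(n,\mu_\infty,\Lambda)$ that has strictly positive asymptotic volume ratio, contradicting Theorem \ref{thm: zero AVR}. Concretely, suppose the claim is false, so there exist $(M_i,g^i_t)\in\AA(n,\mu_\infty,\Lambda)$, points $x_i\in M_i$, and scales $s_i>0$ with $|B(x_i,s_i)|_{g^i_0}/s_i^n\geq\theta$ yet $\sup_{B_0(x_i,s_i)} s_i^2 R\to\infty$. After the parabolic rescaling $\tilde g^i_t:=s_i^{-2}g^i_{s_i^2 t}$ we may assume $s_i=1$, so $|B(x_i,1)|_{\tilde g^i_0}\geq\theta$ while $\sup_{B(x_i,1)} R_{\tilde g^i_0}\to\infty$.

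Next, the plan is to apply a standard Perelman-type point-picking argument to select $y_i\in B(x_i,2)$ with $Q_i:=R_{\tilde g^i_0}(y_i)\to\infty$ and $\sup_{B(y_i,A_iQ_i^{-1/2})} R\leq 4Q_i$ for some $A_i\to\infty$. Rescaling parabolically about $y_i$ via $\hat g^i_t:=Q_i\tilde g^i_{Q_i^{-1}t}$, scale-invariance of the defining conditions of $\AA(n,\mu_\infty,\Lambda)$ shows $(M_i,\hat g^i_t)\in\AA(n,\mu_\infty,\Lambda)$. Theorem \ref{thm: cpt} then yields, along a subsequence, a locally smooth Cheeger-Gromov-Hamilton limit
\[
(M_i,\hat g^i_t,(y_i,0))\longrightarrow(M_\infty,g^\infty_t,(y_\infty,0))
\]
on $(-\infty,0]$, with $(M_\infty,g^\infty_t)\in\AA(n,\mu_\infty,\Lambda)$ and $R_{g^\infty_0}(y_\infty)=1$.

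The central step is to transfer the volume lower bound to the limit. In $\hat g^i_0$ one has $d_{\hat g^i}(y_i,x_i)\leq 2Q_i^{1/2}$ and $|B_{\hat g^i}(x_i,Q_i^{1/2})|\geq\theta Q_i^{n/2}$ by scaling, so $B_{\hat g^i}(y_i,3Q_i^{1/2})\supseteq B_{\hat g^i}(x_i,Q_i^{1/2})$ and
\[
\frac{|B_{\hat g^i}(y_i,3Q_i^{1/2})|}{(3Q_i^{1/2})^n}\geq\frac{\theta}{3^n}.
\]
Since $\Ric_{\hat g^i}\geq 0$, Bishop-Gromov monotonicity (the quantity $r\mapsto|B(y_i,r)|/r^n$ is nonincreasing) propagates this to all smaller radii: $|B_{\hat g^i}(y_i,r)|/r^n\geq\theta/3^n$ for every $0<r\leq 3Q_i^{1/2}$. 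Passing to the smooth limit and using $Q_i\to\infty$ gives $|B_{g^\infty_0}(y_\infty,r)|/r^n\geq\theta/3^n$ for all $r>0$, hence ${\rm AVR}(g^\infty_0)\geq\theta/3^n>0$. This contradicts Theorem \ref{thm: zero AVR}, establishing the claim at time $0$.

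The extension to all $t\leq 0$ is then immediate: taking $X=0$ in \eqref{cond: trace Harnack} yields $\partial_t R\geq 0$, so $R(\cdot,t)\leq R(\cdot,0)$ on $M$ for every $t\leq 0$, which upgrades the space bound to the space-time bound on $B_0(x,s)\times(-\infty,0]$. The most delicate step will be the point-picking: one must arrange the blow-up centers $y_i$ so that the rescaled flows $\hat g^i$ have uniform curvature bounds on parabolic neighborhoods of size going to infinity, so that Theorem \ref{thm: cpt} actually produces a limit in $\AA(n,\mu_\infty,\Lambda)$; after this setup, Bishop-Gromov together with the zero-AVR theorem closes the argument cleanly.
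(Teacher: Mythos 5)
Your overall strategy coincides with the paper's: point‑pick a high‑curvature point near $x$, rescale, pass to a blow‑up limit, transfer the Bishop--Gromov volume lower bound to the limit, and contradict Theorem~\ref{thm: zero AVR}. The transfer of the volume ratio via Bishop--Gromov monotonicity is handled correctly, and the extension to $t\le 0$ via $\partial_t R\ge 0$ is the same as in the paper.

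However, there is a genuine gap: you invoke Theorem~\ref{thm: cpt} to obtain the blow‑up limit, but this is circular. In the paper's logical order, Theorem~\ref{thm: cpt} is proved after the present proposition and relies on Lemma~\ref{lem: weak cpt}, whose proof is stated to be ``a direct application of Proposition~\ref{prop: reg rad by vol rad} and the standard volume comparison.'' So using Theorem~\ref{thm: cpt} here presupposes the result you are trying to establish. The paper instead applies Hamilton's compactness theorem (e.g., \cite[Corollary~E.2]{KL08}) directly to the rescaled flows $\tilde g^i_t$. This is legitimate because, after point‑picking, the rescaled flows satisfy $R_{\tilde g^i}\le 2$ on metric balls of radius $\to\infty$ about the picked points, and \eqref{cond: trace Harnack} extends this curvature bound to all earlier times; injectivity radius lower bounds (or $\kappa$‑noncollapsing) then follow from the uniform Nash entropy bound and the curvature bound. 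If you replace the appeal to Theorem~\ref{thm: cpt} with Hamilton's compactness in this way, your argument becomes essentially the paper's proof; the remaining differences (e.g., your choice of the constant $2$ in $y_i\in B(x_i,2)$ versus the paper's $8\bar s_i$, and the exact formulation of the point‑picking lemma) are cosmetic.
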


\begin{proof}
    Suppose not. Then there exist $(M_i^n,g^i_t)_{t\le 0}\in \AA(n,\mu_\infty,\Lambda), \bar x_i, y_i\in M_i, \bar s_i>0,$ 
    for some fixed $n,\mu_\infty,\Lambda,$
    such that 
    \[
        \frac{|B(\bar x_i,\bar s_i)|_{g^i_0}}{\bar s_i^n} \ge \theta,\quad
        |\bar x_i y_i|_{g^i_0}\le  s_i,
        \quad
        \bar s_i^2 R_{g^i_0}(y_i)\to \infty.
    \]
    In the following, we omit the subindecies $g^i_0$ when it is clear from the context.
    By a standard point picking, e.g., \cite[Lemma H.1]{KL08}, there exist $s_i\le \bar s_i, x_i\in B(\bar x_i,8\bar s_i)$ such that
    \[
        s_i^2R(x_i) \ge \bar s_i^2 R(y_i)\to \infty,
    \]
    and
    \[
        R\le 2R(x_i),\quad
        \text{ on } B(x_i,s_i).
    \]
    Moreover, since $|x_i\bar x_i|<8\bar s_i,s_i\le \bar s_i$, by the volume monotonicity, 
    \begin{equation}
    \label{eq: contr assump vol ratio}
        \frac{|B(x_i,s_i)|}{s_i^n}
        \ge \frac{|B(x_i,10\bar s_i)|}{(10\bar s_i)^n}
        \ge \frac{|B(\bar x_i,\bar s_i)|}{(10\bar s_i)^n}
        \ge 10^{-n}\theta>0.
    \end{equation}  
    Let
    \[
        r_i^{-2}:=R_{g^i_0}(x_i),\quad
        \tilde g^i_{t} := r_i^{-2} g_{r_i^2t}^i.
    \]
    By \eqref{cond: trace Harnack},
    \[
        R_{\tilde g^i} \le 2,\quad
        \text{ on } B_{\tilde g^i_0}(x_i,s_i/r_i)\times (-\infty,0].
    \]
    Recall that $s_i/r_i\to \infty.$
    Since each ancient flow $\tilde g^i$ is noncollapsed with a uniform entropy bound, by Hamilton's compactness theorem, (e.g. \cite[Corollary E.2]{KL08},)
    \[
        (M_i^n, \tilde g^i_{t},x_i)\to (M_\infty^n, g^\infty_{t}, x_\infty),
    \]
    locally smoothly over  $(-\infty,0], $  by possibly passing to a subsequence.
    
    Clearly, $(M_\infty^n, g^\infty_{t})\in \AA(n,\mu_\infty,\Lambda).$
    By \eqref{eq: contr assump vol ratio} and the volume monotonicity,
    \[
        \frac{|B(x_i,A)|_{\tilde g^i_0}}{A^n}
        \ge 10^{-n}\theta,
    \]
    for any $A<\infty,$ if $i$ is large. Thus, ${\rm AVR}(g^\infty_0)>0.$
   This is a contradiction to Theorem \ref{thm: zero AVR}.
    
\end{proof}

We then prove a compactness result with additional assumptions on volume noncollapsing.
\begin{Lemma}[{\cite[Corollary 42.1-(3)]{KL08}}]
\label{lem: weak cpt}
 Let $(M^n_i,g^i_t)_{t\le 0}\in \AA(n,\mu_\infty,\Lambda),$ and $x_i\in M_i.$ 
 If
 \[
    R_{g^i_0}(x_i)\le 1,\quad
    \frac{|B(x_i,r)|_{g^i_0}}{r^n}\ge \theta,
 \]
 for some $r,\theta>0,$ independent of $i,$ then by passing to a subsequence,
 \[
    (M_i^n,g^i_{t},(x_i,0))\to (M^n_\infty, g^\infty_{t}, (x_\infty,0)),
 \]
locally smoothly over $(-\infty,0]$.
\end{Lemma}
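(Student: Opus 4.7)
The plan is to deduce this from Hamilton's Cheeger--Gromov--Hamilton compactness theorem, once we have uniform $|{\Rm}|$ bounds on every compact parabolic region together with a uniform injectivity radius lower bound at the basepoints $(x_i,0)$. Since $|{\Rm}|\le \Lambda R$ by \eqref{cond: Ric and pinch}, it suffices to produce uniform bounds on $R$ on $B(x_i, A)\times(-\infty,0]$ for each fixed $A>0$, and this is precisely what Proposition \ref{prop: reg rad by vol rad} delivers, provided we can propagate the volume noncollapsing hypothesis to nearby basepoints.

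First I would spread the volume lower bound around. For any fixed $A>0$ and any $y\in B(x_i, A)$ (balls and volumes are taken with respect to $g^i_0$ throughout), the inclusion $B(x_i,r)\subseteq B(y, r+A)$ together with the hypothesis yields
\[
    \frac{|B(y, r+A)|_{g^i_0}}{(r+A)^n}\;\ge\;\frac{\theta\, r^n}{(r+A)^n}\;=:\;\theta'(A,r,\theta)\;>\;0,
\]
uniformly in $i$ and in $y\in B(x_i, A)$. Applying Proposition \ref{prop: reg rad by vol rad} at $y$ with scale $s=r+A$, I obtain a uniform bound $R\le C(n,\mu_\infty,\Lambda,\theta,A,r)$ on $B(y, r+A)\times(-\infty,0]$, hence on $B(x_i, A)\times(-\infty,0]$. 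Via \eqref{cond: Ric and pinch} this upgrades at once to a uniform bound on $|{\Rm}|$ on the same parabolic region.

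The second step is to pin down the injectivity radii at the basepoints. The volume lower bound $|B(x_i,r)|_{g^i_0}/r^n\ge \theta$ combined with the uniform curvature bound on $B(x_i,r)\times\{0\}$ just obtained yields, via Cheeger's lemma, a uniform lower bound on the injectivity radius of $(M_i,g^i_0)$ at $x_i$. Together with the uniform $|{\Rm}|$ bounds on arbitrary compact parabolic neighborhoods, Hamilton's compactness theorem (e.g.\ \cite[Corollary E.2]{KL08}) then produces a subsequential locally smooth Cheeger--Gromov--Hamilton limit $(M_\infty^n, g^\infty_t, (x_\infty,0))$ defined on $(-\infty,0]$.

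I do not anticipate a genuine obstacle: the argument is essentially a bookkeeping assembly of Proposition \ref{prop: reg rad by vol rad}, the pinching $|{\Rm}|\le \Lambda R$, and Hamilton's compactness theorem. The only point calling for a modicum of care is the propagation of volume noncollapsing to nearby basepoints, which is dispatched by the ball-inclusion estimate above without appeal to any further comparison geometry.
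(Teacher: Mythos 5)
Your argument is correct and follows essentially the same route the authors intended by the phrase "direct application of Proposition \ref{prop: reg rad by vol rad} and the standard volume comparison": you use the ball-inclusion $B(x_i,r)\subseteq B(y,r+A)$ to propagate the volume lower bound, invoke the proposition to get uniform curvature bounds on $B_{g^i_0}(x_i,A)\times(-\infty,0]$ for each $A$ (with $\eqref{cond: Ric and pinch}$ upgrading $R$-bounds to $|\Rm|$-bounds), and finish with Cheeger's injectivity radius estimate and Hamilton's Cheeger--Gromov compactness. One small remark worth recording: since $\Ric\ge 0$ here, distances are non-decreasing backward in time, so $B_{g^i_t}(x_i,A)\subseteq B_{g^i_0}(x_i,A)$ for $t\le 0$, which is exactly why the proposition's bound on the time-$0$ ball suffices for the parabolic neighborhoods required by Hamilton's theorem.
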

\begin{proof}
    The proof is a direct application of Proposition \ref{prop: reg rad by vol rad} and the standard volume comparison. 
\end{proof}

The weak compactness theorem implies the stronger version, Theorem \ref{thm: cpt}.

\begin{proof}[Proof of Theorem \ref{thm: cpt}]
By rescaling, we may assume that $R_{g^i_0}(x_i)=1.$
    It suffices to verify that there is a constant $\theta>0$ independent of $i$ such that
    \[
        |B(x_i,1)|_{g^i_0}\ge \theta,
    \]
    for all $i.$ Then we may apply the weaker version, Lemma \ref{lem: weak cpt} above.

    Suppose to the contrary that, by passing to a subsequence,
    \begin{equation}
    \label{eq: vol go to 0}
        |B(x_i,1)|_{g^i_0}\to 0.
    \end{equation} 
     By the volume mononicity, there is $s_i<1$ such that
    \[
        \frac{|B(x_i,s_i)|}{s_i^n}
        = \frac{\omega_n}{2},
    \]
    where $\omega_n$ denotes the volume of the standard unit ball in $\IR^n.$ 
    
    We claim that $s_i\to 0.$ 
    \begin{proof}[Proof of the claim.]
        Suppose to the contrary that $s_i\ge a $ for some $a\in (0,1).$
        Then
        \[
        \frac{\omega_n}{2}=
        \frac{|B(x_i,s_i)|}{s_i^n}
        \le  \frac{|B(x_i,a)|}{a^n}
        \le \frac{|B(x_i,1)|}{a^n} \to 0,
        \]
        which is a contradiction.
    \end{proof}
    We consider
    \[
        \tilde g^i_t := s_i^{-2}g^i_{s_i^2t}.
    \]
    For the rescaled metric,
    \[
        R_{\tilde g^i_0}(x_i) \le s_i^2\to 0,\quad
        |B(x_i,1)|_{\tilde g^i_0}
        = \tfrac{1}{2}\omega_n.
    \]
    By Lemma \ref{lem: weak cpt}, by passing to a subsequence,
    \[
        (M^i, \tilde g^i_t, (x_i,0))\to (M^n_\infty, g^\infty_{t}, (x_\infty,0)),
    \]
    locally smoothly over $(-\infty,0].$
    $R(x_\infty)=0$ on $M_\infty$, and thus $(M_\infty,g^\infty_t)$ must be flat due to the maximum principle and \eqref{cond: Ric and pinch}.
    By \eqref{eq: vol go to 0}, ${\rm AVR}(g^\infty_0)=0,$ which is a contradiction to the fact that the limit flow is  noncollapsed (by Perelman's $\kappa$-noncollapsedness or \cite[Theorem 6.1]{Bam20a}).
\end{proof}

As a standard consequence of the compactness theorem, we have 
\begin{Corollary}
\label{cor: curv Shi}
    Let $(M^n,g_t)_{t\le 0}\in \Aclass.$
    Then for each $k\in \IN,$
    \[
        |{\nabla^k \Rm}| \le C_k R^{\frac{k+2}{2}},
    \]
    everywhere on $M\times(-\infty,0]$, where $C_k$ depends on $k,n,\mu_\infty,\Lambda.$
\end{Corollary}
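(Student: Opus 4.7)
The plan is to argue by contradiction and rescaling, treating Theorem \ref{thm: cpt} as the essential compactness input; this is the standard way in which Shi-type derivative-of-curvature estimates are extracted from smooth Cheeger--Gromov compactness (cf.\ \cite{Per02, KL08}). Suppose no such $C_k$ exists for some fixed $k$. Then I would produce a sequence of ancient flows $(M_i^n, g^i_t)_{t \le 0} \in \Aclass$ and spacetime points $(x_i, t_i) \in M_i \times (-\infty, 0]$ with $R(x_i, t_i) > 0$ along which the scale-invariant ratio
$$
Q_i \;:=\; R(x_i, t_i)^{-(k+2)/2} \, |\nabla^k \Rm|(x_i, t_i)
$$
tends to infinity.

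To prepare for Theorem \ref{thm: cpt}, I would first time-translate each flow so the offending slice becomes time $0$, namely $\tilde g^i_t := g^i_{t_i + t}$ for $t \le 0$. Conditions (C2)--(C4) are visibly invariant under time translation; for (C1), the key point is that on an ancient Ricci flow the asymptotic Nash entropy $\inf_{\tau>0} \NN_{o, t_0}(\tau)$ is independent of the base spacetime point, a fact implicit in \cite{Bam20a} and used repeatedly in \cite{MZ21}. Hence the translated flows again lie in $\Aclass$. Setting $r_i := R(x_i, t_i)^{-1/2}$, so that $R_{\tilde g^i_0}(x_i) = r_i^{-2}$, Theorem \ref{thm: cpt} yields, along a subsequence, local smooth Cheeger--Gromov--Hamilton convergence
$$
\bigl(M_i, \; r_i^{-2} \tilde g^i_{r_i^2 t}, \; (x_i, 0)\bigr) \;\longrightarrow\; (M_\infty^n, g^\infty_t, (x_\infty, 0))
$$
on $(-\infty, 0]$, to some limit flow in $\Aclass$.

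The contradiction would then be harvested from the parabolic scaling of $|\nabla^k \Rm|$: at the base point of the rescaled flow the quantity equals
$$
|\nabla^k \Rm|_{r_i^{-2}\tilde g^i_0}(x_i) \;=\; r_i^{k+2}\,|\nabla^k \Rm|(x_i, t_i) \;=\; Q_i \;\to\; \infty,
$$
whereas local smooth convergence of the tensor $\Rm$ and its covariant derivatives at $(x_i,0)$ forces the sequence to converge to the finite value $|\nabla^k \Rm|_{g^\infty_0}(x_\infty)$.

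The only step that requires more than bookkeeping is verifying that (C1) is preserved under time translation; but as noted this reduces to the spacetime-basepoint independence of the asymptotic Nash entropy on an ancient flow. Beyond this, the argument is a routine rescaling-to-contradiction based on Theorem \ref{thm: cpt}, and in particular does not involve any separate parabolic PDE estimates of Bernstein--Shi type.
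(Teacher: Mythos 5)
Your argument is correct and is precisely the ``standard consequence of the compactness theorem'' the paper alludes to without spelling out: a rescaling-to-contradiction via Theorem \ref{thm: cpt}, with the only nontrivial point being the base-point independence of the asymptotic Nash entropy (used to keep (C1) after time translation), which indeed follows from \cite{MZ21}. The parabolic scaling bookkeeping $|\nabla^k\Rm|_{r_i^{-2}g}(x_i)=r_i^{k+2}|\nabla^k\Rm|_g(x_i)=Q_i$ and the contradiction with local smooth convergence are both checked correctly.
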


We also have Perelman's long range curvature estimate. Here, we follow Brendle's formulation in \cite[Theorem 4.2]{Bre22}.
\begin{Theorem}
    Let $(M^n,g_t)_{t\le 0}\in \Aclass.$
    There is an increasing function $\omega:[0,\infty)\to [0,\infty)$ depending on $n,\mu_\infty,\Lambda,$ such that
    \[
        R(y,t)\le R(x,t)\cdot \omega\Big(R(x,t)|xy|_t^2\Big),
    \]
    for any $x,y\in M,t\le 0.$
\end{Theorem}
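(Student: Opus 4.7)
The plan is to prove the estimate by a standard contradiction argument, feeding a scaled-and-translated contradiction sequence into the compactness theorem (Theorem \ref{thm: cpt}), following Perelman's original strategy in \cite[Section 11]{Per02}. It suffices to show that for each $A>0$ there is a constant $\omega_0(A)$, depending only on $n,\mu_\infty,\Lambda,A$, such that whenever $(M^n,g_t)_{t\le 0}\in\Aclass$ and $(x,t),(y,t)$ are space-time points with $R(x,t)|xy|_t^2\le A$, then $R(y,t)/R(x,t)\le \omega_0(A)$. Setting $\omega(A):=\sup_{A'\le A}\omega_0(A')$ then yields the required non-decreasing function.

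Suppose this fails. Then there are a constant $A>0$, flows $(M_i^n,g^i_t)_{t\le 0}\in\Aclass$, times $t_i\le 0$, and points $x_i,y_i\in M_i$ with
\[
    R(x_i,t_i)|x_iy_i|_{t_i}^2\le A,\qquad R(y_i,t_i)/R(x_i,t_i)\to\infty.
\]
Let $\lambda_i:=R(x_i,t_i)$ and consider the time-translated parabolic rescalings
\[
    \tilde g^i_t:=\lambda_i\, g^i_{\lambda_i^{-1}t+t_i},\qquad t\le -\lambda_i t_i,
\]
restricted to $(-\infty,0]$. Conditions (C2)--(C4) are manifestly invariant under parabolic rescaling and time translation, and by the Remark after Theorem \ref{thm: cpt} the Nash entropy condition (C1) is equivalent to Perelman's $\kappa$-noncollapsing, which is invariant as well. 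Hence $\tilde g^i\in\mathcal{A}(n,\mu_\infty',\Lambda)$ for some $\mu_\infty'$ depending only on $n,\mu_\infty,\Lambda$. By construction $R_{\tilde g^i_0}(x_i)=1$ and $|x_iy_i|_{\tilde g^i_0}^2\le A$, while $R_{\tilde g^i_0}(y_i)\to\infty$.

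Now apply Theorem \ref{thm: cpt} to the pointed sequence $(M_i,\tilde g^i_t,(x_i,0))$ with $r_i=1$. After passing to a subsequence,
\[
    (M_i,\tilde g^i_t,(x_i,0))\longrightarrow (M_\infty^n,g^\infty_t,(x_\infty,0))
\]
locally smoothly over $(-\infty,0]$. Since $|x_iy_i|_{\tilde g^i_0}\le\sqrt{A}$, the images of $y_i$ under the diffeomorphisms realizing the Cheeger--Gromov--Hamilton convergence lie, for all large $i$, inside the closed ball $\overline{B}_{g^\infty_0}(x_\infty,2\sqrt{A})$. By completeness of $g^\infty_0$ this ball is compact, so we may extract a further subsequential limit $y_\infty\in \overline{B}_{g^\infty_0}(x_\infty,2\sqrt{A})$. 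The smoothness of the convergence then forces
\[
    R_{g^\infty_0}(y_\infty)=\lim_{i\to\infty}R_{\tilde g^i_0}(y_i)=\infty,
\]
contradicting the smoothness (and hence finiteness) of $R$ on $M_\infty$.

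The main obstacle is the interplay between the normalizations: we simultaneously translate in time and rescale parabolically, and must confirm that the resulting sequence still sits in a single class $\mathcal{A}(n,\mu_\infty',\Lambda)$ to which Theorem \ref{thm: cpt} applies. This is exactly what the $\kappa$-noncollapsing reformulation of (C1) buys us. Everything else --- smooth extraction of a limit, completeness, and extraction of $y_\infty$ --- is a routine application of the compactness theorem already proved above.
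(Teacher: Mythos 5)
Your proof is correct, and since the paper states this theorem without proof (citing Brendle's formulation, which in turn follows Perelman), your blow-up and compactness argument is exactly the intended route. The only point deserving of the care you already gave it is the time-translation step: the Nash entropy bound (C1) is based at the final time, so it is not manifestly preserved under time translation; routing through the $\kappa$-noncollapsing reformulation, which is pointwise over all of $M\times(-\infty,0]$ and hence scale- and translation-invariant, is the right way to place the translated-and-rescaled flows back into a single class $\mathcal{A}(n,\mu_\infty',\Lambda)$ with $\mu_\infty'$ depending only on $n,\mu_\infty,\Lambda$.

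One cosmetic simplification: defining $\omega_0(A)$ as the supremum of $R(y,t)/R(x,t)$ over all admissible configurations with $R(x,t)|xy|_t^2\le A$ makes $\omega_0$ automatically non-decreasing, so the extra step $\omega(A):=\sup_{A'\le A}\omega_0(A')$ is unnecessary; your contradiction argument then shows directly that this $\omega_0(A)$ is finite for each $A$. Also, since the class $\mathcal{A}$ changes (the constant $\mu_\infty$ becomes $\mu_\infty'$) after translation, you should apply Theorem \ref{thm: cpt} with $\mathcal{A}(n,\mu_\infty',\Lambda)$ in place of $\mathcal{A}(n,\mu_\infty,\Lambda)$, which is fine as the compactness result holds for any such class; it is worth stating this explicitly to avoid the appearance of a circularity.
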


With the help of Perelman's long range curvature estimate, we have the following general splitting theorem.
\begin{Theorem}
\label{thm: general split}
    Let $(M^n,g_t)_{t\le 0}\in \AA(n,\mu_\infty,n)$ with $\sec\ge 0$ on $M\times (-\infty,0].$ Then for any $x_i\to \infty,$ if $r_i^{-2}=R(x_i,0),$ then
    by passing to a subsequence,
    \[
        (M, r_i^{-2}g_{r_i^2t}, (x_i,0))
        \to (N^{n-1}\times \IR, \bar g_t+dz^2, ((\bar x,0),0)),
    \]
    in the sense of Cheeger-Gromov-Hamilton over $(-\infty,0],$
    for some noncollapsed ancient flow $(N,\bar g_t)$.
\end{Theorem}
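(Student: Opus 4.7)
The plan is to follow Perelman's approach in \cite[Section 11]{Per02}, as presented by Brendle in \cite{Bre22}. First I would apply the compactness theorem (Theorem \ref{thm: cpt}) to extract, after passing to a subsequence, a smooth pointed Cheeger-Gromov-Hamilton limit
\[
\left(M, r_i^{-2} g_{r_i^2 t}, (x_i, 0)\right) \longrightarrow \left(M_\infty^n, g_t^\infty, (x_\infty, 0)\right)
\]
over $(-\infty, 0]$, with $(M_\infty, g^\infty_t)_{t \le 0} \in \AA(n, \mu_\infty, n)$ still satisfying $\sec \ge 0$ (a condition preserved under smooth Cheeger-Gromov-Hamilton convergence). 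The goal then reduces to producing a line through $x_\infty$ in $(M_\infty, g^\infty_0)$: once we have it, the Cheeger-Gromoll splitting theorem applied to the complete $\Ric \ge 0$ manifold $(M_\infty, g^\infty_0)$ yields $(M_\infty, g^\infty_0) = (N^{n-1} \times \IR, \bar g_0 + dz^2)$, and the product structure is propagated to all $t \le 0$ using backward uniqueness for Ricci flow (available thanks to the bounded curvature on compact intervals built into class $\AA$). The limit $(N, \bar g_t)$ inherits noncollapsing from the uniform Nash entropy bound.

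To construct the first half-line, fix a basepoint $o \in M$ and let $\gamma_i : [0, L_i] \to M$ be unit-speed minimizing geodesics in $(M, g_0)$ from $x_i$ to $o$, with $L_i = d_{g_0}(o, x_i) \to \infty$. Perelman's long-range curvature estimate forces
\[
R(o, 0) \le r_i^{-2}\,\omega\!\left(r_i^{-2} L_i^2\right),
\]
so monotonicity of $\omega$ gives $L_i / r_i \to \infty$ whenever $r_i \to \infty$, while the case of bounded $r_i$ is immediate since $L_i \to \infty$. Rescaling $\gamma_i$ by $r_i^{-1}$ and passing to a subsequence then produces a unit-speed minimizing ray $\bar\gamma^+$ from $x_\infty$ in $(M_\infty, g^\infty_0)$.

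The main obstacle is the construction of a second ray $\bar\gamma^-$ from $x_\infty$ in the opposite direction. My plan is to select points $y_i \in M$ with $d_{g_0}(x_i, y_i)/r_i \to \infty$ such that $x_i$ lies on, or sufficiently close to, a minimizing segment from $o$ to $y_i$; the rescaled limit of these segments then delivers $\bar\gamma^-$. The existence of such $y_i$ is forced by two ingredients working together: the Toponogov angle comparison available under $\sec \ge 0$, which rigidifies geodesic behavior once the rescaled geometry near $x_i$ is almost flat along $\bar\gamma^+$, and the vanishing of the asymptotic volume ratio ${\rm AVR}(g_0) = 0$ from Theorem \ref{thm: zero AVR}, which precludes rescaled volume growth in all directions about $x_i$ and thereby exhibits the "collapsing" direction along which $\bar\gamma^-$ is extracted. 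This Toponogov-plus-AVR step is where $\sec \ge 0$ is used decisively; once past it, the Cheeger-Gromoll splitting and the propagation argument above complete the proof.
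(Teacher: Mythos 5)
Your proposal follows the same skeleton as the paper's proof: derive $|ox_i|_0/r_i\to\infty$ from Perelman's long-range curvature estimate, extract a smooth rescaled limit from the compactness theorem (Theorem \ref{thm: cpt}), and then conclude that the limit splits. Steps one and two of your proposal are correct and essentially identical to the paper's. The difference is in the third step: the paper simply invokes the standard \emph{splitting at infinity} theorem for $\sec\ge 0$ (\cite[Theorem 5.35]{MT07}), whereas you attempt to reconstruct it by building two opposite rays and then applying Cheeger--Gromoll.

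This is where there is a genuine gap. For the second ray you propose to find points $y_i$ with $d_{g_0}(x_i,y_i)/r_i\to\infty$ such that $x_i$ lies (nearly) on a minimizing segment from $o$ to $y_i$. That would indeed finish the argument, but you do not actually produce such $y_i$; you assert that their existence is ``forced'' by Toponogov together with ${\rm AVR}(g_0)=0$. The AVR ingredient is spurious here: vanishing asymptotic volume ratio plays no role in the Toponogov splitting-at-infinity theorem, and the ``collapsing direction'' heuristic is not correct. A concrete warning sign is the Bryant soliton itself, which has $\sec\ge 0$, $\kappa$-noncollapsing, and ${\rm AVR}=0$, yet does not split off a line. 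What makes the present situation different is precisely that the base-points $x_i$ satisfy $|ox_i|_0/r_i\to\infty$, so in the rescaled picture the reference point $o$ recedes to infinity; the whole content of \cite[Theorem 5.35]{MT07} is a Toponogov comparison exploiting this, and nothing about AVR. So your proposal leaves the key geometric step unproved and misattributes its mechanism.

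A lesser issue: to propagate the split from $t=0$ to all $t\le 0$ you appeal to backward uniqueness for Ricci flow, but the limit produced by Theorem \ref{thm: cpt} may have spatially unbounded curvature, so Kotschwar-type backward uniqueness does not apply directly. The standard route is via Hamilton's strong maximum principle applied to the kernel of the Ricci (or curvature) operator, and is usually packaged inside the cited splitting theorem.
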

For completeness, we include Brendle's proof of this theorem, \cite[Corollary 4.4]{Bre22}, using Perelman's long range curvature estimate.
\begin{proof}[Proof of Theorem \ref{thm: general split}]
    Let $x_i\to \infty$. Fix $o\in M.$
    By Perelman's long range curvature estimate,
    \[
        0<R(o,0)\le R(x_i,0)\cdot \omega\Big(R(x_i,0)|ox_i|_0^2\Big).
    \]
    We claim that 
    \[
        R(x_i,0) |ox_i|_0^2\to \infty.
    \]
    In fact, suppose to the contrary that $ R(x_i,0) |ox_i|_0^2\le A<\infty$, for some subsequence. Then
    \[
        R(x_i,0)\ge R(o,0)/\omega(A)>0,
    \]
    and thus $R(x_i,0)|ox_i|_0^2\to \infty$, which is a contradiction.

    Let 
    $$r_i^{-2}=R(x_i,0),\quad
    g^i_t:=r_i^{-2}g_{r_i^2t}.$$
    By the compactness, Theorem \ref{thm: cpt}, by passing to a subsequence,
    $(M, g^i_t,x_i)\to (\overline{M}, \bar g_t, \bar x)$, smoothly over $(-\infty,0].$
   By the standard theorem of splitting at infinity, e.g., \cite[Theorem 5.35]{MT07}, $(\overline{M}, \bar g_t)$ splits off a line. 
\end{proof}

\begin{proof}[Proof of Theorem \ref{thm: main}]

Theorem \ref{thm: main} follows by Remark \ref{rmk: steady Harnack} and Theorem \ref{thm: general split}.
    
\end{proof}

We mention an application of the curvature derivative estimates to steady solitons due to Deng and Zhu in \cite{DZ20a,DZ20b}.
\begin{Theorem}[Deng-Zhu]
    Let $(M^n,g,f)$ be a steady soliton such that its cononical Ricci flow is in $\Aclass$.
    Suppose that
    \[
        R(o)\ge a,
    \]
    for some $o\in M, a>0.$
    Then for any $\rho>0, x\in B(o,\rho),t\le 0,$
    \[
        (1-t)R(x,t)\ge c(n,\mu_\infty,\Lambda,\rho,a)>0.
    \]
\end{Theorem}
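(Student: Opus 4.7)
The plan is to reduce the claimed inequality to a sharp decay estimate for the scalar curvature along the integral curves of $-\nabla f$, exploiting that on a steady soliton the canonical-flow identity $R_{g_t}(x) = R(\Phi_t(x))$ holds, where $\Phi_t$ is the flow of $-\nabla f$ on the fixed Riemannian manifold $(M,g)$. Since $|\dot\Phi_s| = |\nabla f| = \sqrt{1-R} \le 1$, any trajectory starting in $B_g(o,\rho)$ stays in $B_g(o, \rho + |t|)$ for $t \le 0$, so the conclusion follows once one produces a pointwise decay of the form $R(y) \ge c/(1 + d_g(y,o))$ on the underlying soliton: indeed, then $(1-t)R(x,t) = (1-t) R(\Phi_t(x)) \ge (1-t)c/(1 + \rho + |t|) \ge c' > 0$ uniformly for $t \le 0$.

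The first step would be to use Perelman's long-range curvature estimate stated just above to upgrade $R(o) \ge a$ to a uniform positive lower bound on $B_g(o,\rho)$: since $R \le 1$ on the soliton, the estimate $R(o) \le R(x)\,\omega(R(x)|ox|^2) \le R(x)\,\omega(\rho^2)$ gives $R(x) \ge a/\omega(\rho^2)$ there. Corollary \ref{cor: curv Shi} then provides $|\nabla R| \le C R^{3/2}$, equivalently $|\nabla R^{-1/2}| \le C/2$, and integrating this Lipschitz bound away from $o$ yields the intermediate quadratic decay $R(y) \ge c(a)/(1 + d_g(y,o))^2$ throughout $M$.

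To sharpen this to the linear decay required by the $(1-t)$ factor, I would exploit the soliton-specific identity $\nabla R = -2\Ric(\nabla f,\cdot)$ coming from the soliton equation together with $|\Ric| \le \Lambda R$ from condition (C3) and $|\nabla f| \le 1$; these yield a differential inequality for $R$ along the $\nabla f$-flow lines parametrized by arc length, of the form $\frac{d}{ds}R \ge -2\Lambda R\sqrt{1-R}$. Bootstrapping this ODE bound against the previous quadratic decay and Perelman's long-range estimate then produces the sharp linear control $R(\Phi_t(x)) \ge c(n,\mu_\infty,\Lambda,\rho,a)/(1+|t|)$ along trajectories originating in $B_g(o,\rho)$. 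Multiplying by $1 - t \ge 1 + |t|$ for $t \le 0$ closes the proof.

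The main obstacle is precisely this last bootstrap step, which upgrades the Shi-type quadratic decay to the sharp linear decay compatible with the factor $(1-t)$ in the statement; this is where the full steady-soliton structure (rather than the abstract class $\AA(n,\mu_\infty,\Lambda)$ alone) is used essentially, and it is the heart of the Deng-Zhu argument in \cite{DZ20a,DZ20b}.
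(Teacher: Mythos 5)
Your overall strategy (reduce to a pointwise decay estimate for $R$ along the $\nabla f$-trajectories, then multiply by $1-t$) is the right one, but the key quantitative ingredient is missing and the ``bootstrap'' you invoke cannot supply it.

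The issue is that neither of your two proposed tools gives the linear-in-time decay you need. From $|\nabla R|\le C R^{3/2}$ you correctly derive that $R^{-1/2}$ is Lipschitz, hence $R(\Phi_{-s}(x))\gtrsim (1+s)^{-2}$; multiplying by $1+s$ gives $(1+s)R\to 0$, so this is insufficient. Your alternative ODE $\tfrac{d}{ds}R \ge -2\Lambda R\sqrt{1-R}$, coming from the soliton identity $\nabla R=-2\Ric(\nabla f,\cdot)$ and $|\Ric|\le\Lambda R$, gives only $\tfrac{d}{ds}\log R\ge -2\Lambda$, i.e.\ \emph{exponential} decay $R(\Phi_{-s}(x))\ge R(x)e^{-2\Lambda s}$, which is strictly worse; multiplying by $1+s$ again goes to zero. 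You acknowledge the upgrade to linear decay is the crux and label it a ``bootstrap,'' but as stated there is no mechanism that performs it: combining a quadratic bound with an exponential bound does not produce a linear one, and you do not supply an argument that does.

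The missing idea is to estimate $\partial_t R$ not via the first-order spatial gradient, but via the Ricci flow evolution equation $\partial_t R_{g_t}=\Delta_{g_t} R_{g_t}+2|\Ric_{g_t}|^2$, which holds along the canonical flow and coincides pointwise with $-\langle\nabla f,\nabla R\rangle(\Phi_t(x))$. Corollary~\ref{cor: curv Shi} gives the \emph{second-order} Shi bound $|\nabla^2\Rm|\le C_2R^2$, hence $|\Delta R|\le C R^2$, and together with $|\Ric|^2\le\Lambda^2R^2$ this yields $\partial_t R\le CR^2$. Integrating $\partial_t(1/R)\ge -C$ from $t\le 0$ to $1$ gives $\tfrac{1}{R(x,t)}\le\tfrac{1}{R(x,1)}+C(1-t)$, which is exactly linear growth of $1/R$ backward in time. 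It remains only to bound $R(x,1)$ from below for $x\in B(o,\rho)$; you used Perelman's long-range estimate at $t=0$, whereas the paper instead integrates the trace Harnack inequality \eqref{cond: trace Harnack} from $(o,0)$ to $(x,1)$ to get $R(x,1)\ge R(o)e^{-|xo|^2/2}\ge a e^{-\rho^2/2}$ --- either gives a constant depending on $n,\mu_\infty,\Lambda,\rho,a$, so this part of your proposal is fine, but the core estimate $\partial_t R\le CR^2$ via the \emph{second}-order curvature derivative bound is what makes the argument close, and it is absent from your plan.
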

Our improvement here is that the constant $c$ is universal depending only on $n,\mu_\infty,\Lambda,\rho, a$. Since $\Phi_t(x)$ moves  away from $x$ linearly in $-t$, this is a inverse linear lower bound for the scalar curvature.
See also \cite[Corollary 2.7]{CCMZ23} for a inverse quadratic lower bound of $R$ for 4d steady soliton without any curvature positivity conditions. 
\begin{proof}
    Fix $o\in \CC.$  For any $x\in M, t\le 1,$ along $\Phi_t(x),$ by Corollary \ref{cor: curv Shi},
    \[
        \partial_t R(\Phi_t(x))
        = \Delta R + 2|{\Ric}|^2
        \le  CR^2,
    \]
    where $C$ depends on $n,\mu_\infty,\Lambda.$
    By integration, for $t\le 0,$
    \[
        \frac{1}{R(x,1)} - \frac{1}{R(x,t)}
        \ge -C (1-t).
    \]
    As a standard consequence of \eqref{cond: trace Harnack}, for any $x\in B(o,\rho),$
    \[
        R(x,1)
        \ge R(o)\cdot \exp\left(-{|xo|^2/2}\right)
        \ge a e^{-\rho^2/2}.
    \]
    Thus, for any $x\in B(o,\rho),t\le 0,$
    \[
        (1-t)R(x,t)\ge c(n,\mu_\infty,\Lambda,\rho,a).
    \]
    
\end{proof}



\bibliographystyle{amsalpha}

\newcommand{\alphalchar}[1]{$^{#1}$}
\providecommand{\bysame}{\leavevmode\hbox to3em{\hrulefill}\thinspace}
\providecommand{\MR}{\relax\ifhmode\unskip\space\fi MR }
\providecommand{\MRhref}[2]{%
  \href{http://www.ams.org/mathscinet-getitem?mr=#1}{#2}
}

\noindent Mathematics Institute, Zeeman Building, University of Warwick, Coventry CV4 7AL, UK
\\ E-mail address: \verb"pak-yeung.chan@warwick.ac.uk"
\\

\noindent Department of Mathematics, Rutgers University, Piscataway, NJ 08854, USA
\\ E-mail address: \verb"zilu.ma@rutgers.edu"
\\

\noindent School of Mathematical Sciences, Shanghai Jiao Tong University, Shanghai, China, 200240
\\ E-mail address: \verb"sunzhang91@sjtu.edu.cn"

\end{document}